\documentclass[11pt, a4paper, oneside, reqno]{amsart}

\usepackage[margin=1in]{geometry}
\usepackage{amsfonts}
\usepackage{amsmath}
\usepackage{amssymb,bbm}
\usepackage{amsthm}
\usepackage{hyperref}
\usepackage{mathtools}
\usepackage{tikz}
\usepackage{tikz-cd}
\usepackage{enumerate}
\usepackage{stmaryrd} 
\usepackage{mathabx}  
\usepackage{xcolor}
\usepackage{tipa}
\makeatletter
\usepackage{svg}
\renewcommand{\tocsection}[3]{%
	\indentlabel{\@ifnotempty{#2}{\bfseries\ignorespaces#1 #2.\,\,}}\bfseries#3}
\renewcommand{\tocsubsection}[3]{%
	\indentlabel{\@ifnotempty{#2}{\ignorespaces#1 #2\quad}}#3}
\renewcommand{\tocsubsubsection}[3]{%
	\quad\quad\quad\indentlabel{\@ifnotempty{#2}{\ignorespaces#1 #2\quad}}#3}

\def\1{\mathbf{1}}

\newcommand{\sK}{{\sf K}}
\newcommand{\copairpq}{{[{\bf p},{\bf q}]}}
\newcommand{\setK}{{D(I,J)}}
\newcommand{\partitionskl}{{D(k,l)}}
\newcommand{\sM}{{\sf M}}

\newtheorem{theorem}{Theorem}
\newtheorem{proposition}[theorem]{Proposition}
\newtheorem*{proposition*}{Proposition}

\newtheorem*{theorem*}{Theorem}
\newtheorem{lemma}[theorem]{Lemma}

\theoremstyle{remark}
\newtheorem{remark}[theorem]{Remark}
\newtheorem{example}[theorem]{Example}

\theoremstyle{definition}
\newtheorem{definition}[theorem]{Definition}

\numberwithin{equation}{section}
\title{Second moments associated with finite free multiplicative convolution}
\author{Nicolas Gilliers}
\address{Université Paris Cité, CNRS, MAP5, F-75006 Paris, France}
\email{nicolas.gilliers@u-paris.fr} 

\author{Andrea Pettenello}
\address{Université Paris Cité, CNRS, MAP5, F-75006 Paris, France}
\email{andrea.pettenello@etu.u-paris.fr}

\subjclass[2020]{46L54, 20C30}
\keywords{Finite free convolution, Weingarten calculus, spherical functions}
\begin{document}
\begin{abstract}
We compute the second mixed moment of elementary symmetric polynomials in the eigenvalues of the random matrix model $AUBU^*$, where $A$ and $B$ are positive, and $U$ is a Haar-distributed random matrix, associated with finite free multiplicative convolution.
\end{abstract}
\maketitle
 \tableofcontents

\section{Introduction}
\subsection{Motivation}
\label{sec:Intro}
Finite free probability was first introduced by Marcus, Spielman and Srivastava in the seminal paper \cite{marcus2022finite}. Its main object of study is the expected characteristic polynomial of the sum or product of randomly rotated matrices. These expected characteristic polynomials are related to certain convolution operations on polynomials first studied over a century ago by Walsh \cite{walsh1922location} and Szeg\"{o} \cite{szego1922bemerkungen}. Specifically, if $A$ and $B$ are $d \times d$ Hermitian matrices with respective characteristic polynomials $p$ and $q$, and $U$ is a Haar-distributed unitary matrix, then the expected characteristic polynomials of $A+UBU^*$ and $AUBU^*$ are respectively equal to the symmetric additive convolution $\boxplus_d$ and the symmetric multiplicative convolution $\boxtimes_d$ of the polynomials $p$ and $q$. Remarkably, these operations are connected to the free additive convolution $\boxplus$ and the free multiplicative convolution $\boxtimes$ of Voiculescu \cite{voiculescu1991limit}. In this sense, finite free probability can be seen as a finite-dimensional approximation of Voiculescu's free probability \cite{voiculescu1986addition, voiculescu1987multiplication}.
\par
The study of the expectation of the characteristic polynomial of a matrix is equivalent to the study of the expectation of the elementary symmetric polynomials in the eigenvalues of that matrix, since the latter are precisely the coefficients of the characteristic polynomial. In the conclusion of his thesis \cite{mirabelli2021hermitian}, Mirabelli opened up the question of finding explicit formulas for the second moments of these elementary symmetric polynomials. Such formulas would provide more information on the distribution of the eigenvalues of randomly rotated matrices, as they would allow us to understand how the eigenvalues fluctuate around the mean. Furthermore, they would be a first, albeit small, step in the development of a theory of second-order finite free probability which would hopefully approximate the second-order free probability of \cite{mingo2006second, mingo2007second, collins2007second} similar to how finite free probability approximates free probability as described above. In this paper, we partially answer Mirabelli's question by determining an explicit formula for the expectation of the product of the $k$-th and $l$-th elementary symmetric polynomials in the eigenvalues of the random matrix $AUBU^*$ (our method is also valid for the sum $A+UBU^*$ but the final formula is more complicated and less easily interpretable, so we chose to omit it for now). The resulting formula expresses the second moment as a sum over certain matchings weighted by weak ballot numbers and monomial symmetric polynomials in the eigenvalues of $A$ and $B$ (see Section \ref{sec:MainResult}).
\par 
Our approach is similar to that of Campbell and Yin in \cite{campbell2021finite}, where they calculate the first moment of the elementary symmetric polynomials, in that we use Weingarten calculus (see Section \ref{sec:WeinCalc}) to reduce the calculation of the second moment to a combinatorial problem over the symmetric group. For the first moment, the Weingarten expansion leads to an average of characters over the whole symmetric group; due to orthogonality of characters, only the trivial character contributes. For the second moment, the average is over the subgroup $S_k \times S_l$ of the symmetric group $S_{k+l}$. Because the pair $(S_{k+l},S_{k}\times S_l)$ is a Gelfand pair (see Section \ref{sec:HahnPoly}), the averaged characters, which are the spherical functions of $(S_{k+l},S_k\times S_l)$, vanish for all but a handful of partitions, specifically those with at most two rows. In both cases, these cancellations reduce the sum to a tractable form.
We remark that the Gelfand pair structure that we exploit for our calculations does not generalize to the calculation of higher moments or the replacement of the unitary group by the orthogonal group; in such cases, Gelfand pairs do not appear, thus making the calculation apparently much harder. Overcoming these obstacles would be an interesting direction for further research.

\subsection{Main Result}
\label{sec:MainResult}
We now set up notation to state our main result more precisely.
\begin{definition}
	\label{def:cycleszizgzag} Let $d \geq 1$ be a positive integer and $I,J \subseteq [d]$ be two subsets.
	We denote $\setK$ the set of partitions of $I \sqcup J$ composed of blocks having at most one element in $I \subset I \sqcup J$ and at most one element in $J \subset I \sqcup J$.
	We denote $\partitionskl$ the set $D(I,J)$ with $I = \{1,...,k\}$ and $J = \{k+1,...,k+l\}$).
\end{definition}
Let $\sK_1, \sK_2 \in \setK$. We consider two different types of blocks of the partition $\sK_1 \vee \sK_2$:
\emph{alternating cycles} and \emph{alternating segments}. They are defined as follows.
\begin{enumerate}
    \item An \emph{alternating cycle} is a block of $\sK_1 \vee \sK_2$ of cardinality greater than or equal to 2 containing no singletons of either $\sK_1$ or $\sK_2$.

    \item  An \emph{alternating segment} is a block of $\sK_1 \vee \sK_2$ containing
      exactly two elements which are singletons in $\sK_1$, or exactly two elements
      which are singletons in $\sK_2$.

\end{enumerate}

\begin{remark}
Let us briefly explain the choice of terminology for alternating cycles and segments; as we shall see, it is quite self-explanatory. Let $G(\sK_1, \sK_2)$ be the graph whose vertices are the elements of $I\sqcup J$ and whose edges are the 2-element blocks of $\sK_1$ (colored \emph{red}) and $\sK_2$ (colored \emph{blue}). Then alternating cycles of $\sK_1 \vee \sK_2$ correspond to connected components of $G(\sK_1, \sK_2)$ that are cycles such that the edges alternate between red and blue and alternating segments are connected components of $G(\sK_1, \sK_2)$ that are segments whose initial and terminal vertices are singletons in either $\sK_1$ or $\sK_2$ and whose edges alternate between red and blue.
\end{remark}
\begin{example}
\label{ex:ExampleCyclesSegments}
    Let $I = \{1,2,5,6,9\}$ and $J = \{1,4,5,6,7,10\}$. Let 
    $$\sK_1 = \{\{4\},\{1_I\},\{1_J\},\{2,5_J\},\{5_I,7\}, \{6_I,6_J\},\{9,10\}\}
    $$ and 
    $$\sK_2 = \{\{1_J\}, \{5_I\}, \{7\}, \{1_I,5_J\},\{2,4\},\{9,6_J\},\{6_I,10\}\},$$
    where the notation $x_I$ (resp. $x_J$) is used to indicate that we are viewing the element $x \in I\cap J$ as belonging to $I \subset I \sqcup J$ (resp. $J \subset I \sqcup J)$.
    
    Then the join $\sK_1 \vee \sK_2$ is
    $$\sK_1 \vee \sK_2 = \{\{6_I,6_J,9,10\},\{1_I,2,4,5_J\}, \{5_I,7\}, \{1_J\}\}.
    $$
    The block $\{6_I, 10, 9, 6_J\}$ is an alternating cycle and the blocks $\{1_I,2,4,5_J\}$ and $\{5_I,7\}$ are alternating segments. In the graph $G(\sK_1, \sK_2)$, the connected components corresponding to these blocks are illustrated in Figure \ref{fig:CyclesAndSegments} below.
\begin{figure}[h]
    \centering
      \def\svgwidth{0.5\linewidth}
     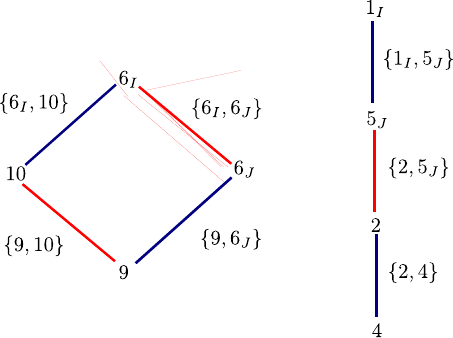
    \caption{On the left is the connected component in $G(\sK_1, \sK_2)$ corresponding to the alternating cycle of $\sK_1 \vee \sK_2$ in Example \ref{ex:ExampleCyclesSegments} and on the right is the connected component corresponding to one of the two alternating segment.}
    \label{fig:CyclesAndSegments}
\end{figure}
\end{example}

We denote ${\sf C}({\sf K}_1, {\sf K}_2)$ the number of alternating cycles of $\sK_1 \vee \sK_2 $ and ${\sf S}({\sf K}_1, {\sf K}_2)$ the number of alternating segments of $\sK_1 \vee \sK_2$. If $\sK_1$ and $\sK_2$ have types $t(\sK_1) = (2^{y_1},1^{k+l-2y_1})$ and $t(\sK_2) = (2^{y_2},1^{k+l-2y_2})$, we define 
$$
	\tilde{\sf S}(\sK_1, \sK_2) \vcentcolon= \frac{1}{2}(y_1+y_2+{\sf S}(\sK_1, \sK_2)).
$$
\newcommand{\tildeS}[2]{\tilde{\sf S}(#1,#2)}

For partitions $\sK_1, \sK_2 \in \partitionskl$, we use the following notation for the number of shared singletons in the partitions ${\sf K}_1$ and ${\sf K}_2$
\begin{align*}
	 & (\sK_1, \sK_2 )[k+l]  &&= |\{ x \in [k+l]:\{x\} \in \sK_1\cap \sK_2\}| && = l+k - 2\tildeS{\sK_1}{\sK_2}, \\
	 & (\sK_1,\sK_2 )[k]  &&= |\{ x \in [k]:\{x\} \in \sK_1\cap \sK_2\}| &&= k - \tildeS{\sK_1}{\sK_2}.
\end{align*}
The right-most equalities in the above formulae are consequences of equations \eqref{eqn:relationy} and \eqref{eqn:relationtofixedpoints} in section \ref{prop:FormulePourSx}. We are now ready to state our main result.
\begin{theorem}\label{thm:MainTheorem}
Let $d\geq 1$ be a positive integer and $A,B \in \mathcal{M}_{d,d}(\mathbb{C})$ two complex Hermitian matrices with positive eigenvalues. Denote by $a = (a_1,\ldots,a_d)$, $b=(b_1,\ldots,b_d)$ their eigenvalues. Let $U$ be a Haar-distributed random unitary matrix. Let $1 \leq k\leq l \leq d$ be two integers such that $d \geq k+l$. Then,
\begin{align*}
	 & \mathbb{E}[e_k(AUBU^{\star})e_l(AUBU^{\star})]                      \\
	 & \hspace{1cm}=\frac{1}{l!k!}\sum_{y,m = 0}^{k} \tilde{M}_m(a)\tilde{M}_y(b)  \\
	 & \hspace{0.5cm}\sum_{\substack{\sK_1, \sK_2\in \partitionskl                              \\ {t({\sK}_1)}=(2^m,1^{k+l-2m}) \\ {t({\sf K}_2)}=(2^y,1^{k+l-2y})}}\hspace{-0.5cm}{2^{{\sf C}(\sK_1,\sK_2)}}\sum_{h = 0}^{(\sK_1,\sK_2 )[k]}\tilde{B}((\sK_1,\sK_2)[k+l]-h,h) \frac{1}{(d+1)_{h+ {\tilde{\sf S}}(\sK_1,\sK_2)}(d)_{k+l-h-{\tilde{\sf S}}(\sK_1,\sK_2)}}
\end{align*}
with
\begin{enumerate}
    \item $
\tilde{B}((\sK_1,\sK_2)[k+l]-h,h)=\frac{B((\sK_1,\sK_2)[k+l]-h,h)}{\binom{(\sK_1,\sK_2)[k+l]}{(\sK_1,\sK_2)[k]}}=\frac{\binom{(\sK_1,\sK_2)[k+l]}{h}-\binom{(\sK_1,\sK_2)[k+l]}{h-1}}{\binom{(\sK_1,\sK_2)[k+l]}{(\sK_1,\sK_2)[k]}},
$
where the ${B}(p,q),\, p \geq q $, are the \emph{weak ballot numbers}:
$$
B(p,q) = \binom{p+q}{q}-\binom{p+q}{q-1}.
$$ 
The numbers $B(p,q)$ count the number of lattice paths from  $(0,0)$ to $(p+q,p-q)$ with steps $(1,1)$ and $(1,-1)$ that never dip below the $x$-axis. When $p=q$ this is exactly a Dyck path, and $B(p,p)$ coincides with the $p$-th Catalan number.
\item $\tilde{M}_r(a) = r!(k+l-2r)! M_r(a)$, where $M_r(a)$ is the monomial symmetric polynomial associated to the partition $(2^r, 1^{k+l-2r})$ in the variables $a = (a_1,...,a_d)$.
\item For positive integers $d \geq n \geq 1$, the quantity $(d)_n = d(d-1)...(d-n+1)$ is the \emph{falling factorial} (or \emph{Pochhammer symbol}).

\end{enumerate}

\end{theorem}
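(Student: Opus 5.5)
We plan to follow the Campbell--Yin strategy indicated in the introduction: expand both elementary symmetric polynomials as sums of principal minors, apply Weingarten calculus to the resulting degree-$(k+l)$ polynomial in the entries of $U,\bar U$, and then perform the sum over the symmetric group using the Gelfand pair $(S_{k+l},S_k\times S_l)$.

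\textbf{Step 1: expand into permutation sums.} Write $e_k(M)=\frac{1}{k!}\sum_{i_1,\dots,i_k}\sum_{\sigma\in S_k}\mathrm{sgn}(\sigma)\prod_r M_{i_r i_{\sigma(r)}}$, and similarly for $e_l$. Taking $M=AUBU^\star$, we may assume without loss of generality that $A=\mathrm{diag}(a)$ and $B=\mathrm{diag}(b)$, by unitary invariance of Haar measure. The product $e_k e_l$ then becomes a sum over multi-indices $\mathbf{i}\in[d]^{k+l}$ (for $A$), $\mathbf{j}\in[d]^{k+l}$ (for $B$), and $\sigma\in S_k\times S_l\subset S_{k+l}$, weighted by $\mathrm{sgn}(\sigma)\prod a_{i_r}b_{j_r}\prod U_{i_r j_r}\bar U_{i_{\sigma(r)} j_r}$.

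\textbf{Step 2: apply the Weingarten formula.} The formula gives $\sum_{\alpha,\beta\in S_{k+l}}\delta(\mathbf{i},\mathbf{i}\circ\sigma\alpha)\,\delta(\mathbf{j},\mathbf{j}\circ\beta)\,\mathrm{Wg}(\alpha\beta^{-1},d)$.

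\textbf{Step 3: group multi-indices by kernel.} The antisymmetry coming from $\mathrm{sgn}(\sigma)$, together with averaging over $\sigma$, kills every multi-index $\mathbf{i}$ whose kernel has a block containing two elements of $[k]$ or two elements of $[k+1,k+l]$. The surviving kernels are therefore exactly the partitions $\sK_1\in\partitionskl$. A kernel of type $(2^m,1^{k+l-2m})$ contributes a monomial that sums to $\tilde M_m(a)$, up to the stated normalization; the same holds for $\mathbf{j}$, giving $\sK_2$ and $\tilde M_y(b)$. One must check carefully the multiplicities and signs, which produce the factor $1/(k!\,l!)$. After this reduction, the coefficient of $\tilde M_m(a)\tilde M_y(b)$ is a sum over $\sK_1,\sK_2$ of an average of $\mathrm{Wg}$ over the double cosets determined by the stabilizers of $\sK_1$ and $\sK_2$ (products of transpositions), twisted by signs.

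\textbf{Step 4: character expansion.} Expand $\mathrm{Wg}(\pi,d)=\frac{1}{(k+l)!^2}\sum_\lambda \frac{\chi^\lambda(1)^2}{s_\lambda(1^d)}\chi^\lambda(\pi)$. The averages over $S_k\times S_l$ turn the characters into spherical functions $\omega^\lambda$ of the Gelfand pair, which vanish unless $\lambda$ has at most two rows (or, after the sign twist, at most two columns). For two-row $\lambda=(k+l-h,h)$ with $h\le k$, the spherical functions are Hahn polynomials, evaluated at the double coset (distance) parameter. This is where $(d+1)_{h+\cdot}(d)_{\cdot}$ arises: $s_\lambda(1^d)$ for a two-column shape factorizes as such a product of rising and falling terms.

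\textbf{Step 5: collapse the group sum.} The remaining sum over $\alpha,\beta$ compatible with $\sK_1,\sK_2$ must be reorganised via the graph $G(\sK_1,\sK_2)$. Each alternating cycle contributes a factor $2$ (two orientations), and each alternating segment shifts the Hahn polynomial argument, which produces $\tilde{\sf S}$. The shared singletons $(\sK_1,\sK_2)[k+l]$ and $(\sK_1,\sK_2)[k]$ then play the role of a smaller Gelfand pair $(S_{n},S_{n_1}\times S_{n-n_1})$, evaluated at the identity coset, where the dimension ratio is exactly $B(n-h,h)/\binom{n}{n_1}$.

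I expect Step 5 to be the main obstacle: proving that the combined sum over the stabilizer cosets factorizes cleanly into the cycle factor $2^{\sf C}$ times a spherical-function value depending only on the shared-singleton counts, with the shift by $\tilde{\sf S}$ in the Pochhammer symbols. I would first verify this on small cases (e.g.\ $k=l=1$, and $k=1,l=2$) to fix normalizations, then prove it by induction, removing one alternating component at a time.
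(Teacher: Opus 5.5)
Your Steps 1--4 are sound and agree with the paper's set-up up to bookkeeping. The paper expands over principal minors indexed by subsets $I,J$ and injective maps $\mathbf p,\mathbf q$; you expand $e_k,e_l$ over index tuples instead. This puts $\sK_1,\sK_2\in D(k,l)$ on an equal footing from the start and yields $\tilde M_m(a)\tilde M_y(b)$ and the prefactor $1/(k!\,l!)$ directly, so the paper's symmetrization step becomes unnecessary. The vanishing for non-injective $\mathbf j$ is also immediate in your form, since the $\sigma$-sum is a determinant with two equal rows.

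After the character expansion and the sign twist, what remains for each pair $(\sK_1,\sK_2)$ and each $h$ is
\[
\sum_{\eta\le\sK_1,\ \gamma\le\sK_2}\varepsilon(\eta)\varepsilon(\gamma)\,Q_h\bigl(k-|\eta\gamma([k])\cap[k]|;l,k\bigr).
\]
Evaluating this is the entire content of the theorem, and your Step 5 only describes the expected answer. Its heuristics are also partly off:
components do not shift the argument of the Hahn polynomial; they contribute signed binomial weights in that argument. The shift $h\mapsto h+\tilde{\sf S}$ in the Pochhammer symbols is a shift of the representation index, produced by a vanishing. The factor $2$ per cycle comes from the two all-or-nothing choices of $\gamma$ on the cycle, not from orientations. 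Reading $\tilde B$ as $\dim S^{(n-h,h)}/\binom{n}{n_1}$ for a smaller Gelfand pair is numerically right, but nothing supports it; the paper says explicitly that it lacks a conceptual argument too.

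Concretely, two ingredients are missing.

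(a) You need the signed count $S(r)=\sum_{|\eta\gamma([k])\cap[k]|=r}\varepsilon(\eta)\varepsilon(\gamma)$, which equals $(-1)^{k-r}2^{{\sf C}(\sK_1,\sK_2)}\binom{\tilde{\sf S}(\sK_1,\sK_2)}{k-r}$; equivalently $\sum_r S(r)t^r=2^{\sf C}t^{k-\tilde{\sf S}}(t-1)^{\tilde{\sf S}}$. The paper obtains this by cancellation: the $\eta$-sum vanishes unless $\gamma$ uses all or none of the $\sK_2$-pairs in each block of the join, and the surviving $\gamma$ are then counted by block type.

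Your component-by-component induction can prove it, but only at this level, not on the Weingarten sum itself. $\mathrm{Wg}$ and $Q_h$ do not factor over components. By contrast, $\varepsilon$ is multiplicative and $|\eta\gamma([k])\cap[k]|$ is additive over the blocks of $\sK_1\vee\sK_2$, so the generating polynomial $\sum_r S(r)t^r$ does factor. You would still have to compute every local factor:
\begin{itemize}
\item $2(t-1)^n$ for an alternating cycle with $n$ pairs of each partition;
\item $(t-1)^{j+1}$ for an alternating segment with $2j+1$ edges;
\item $t(t-1)^j$ or $(t-1)^j$ for a path with $2j$ edges ending at one singleton of each partition, according as it has $j+1$ or $j$ vertices in $[k]$ (such paths are neither cycles nor segments, yet they feed $\tilde{\sf S}$ through $y_1+y_2$);
\item $t$ or $1$ for a shared singleton in $[k]$ or in $[k+1,k+l]$.
\end{itemize}

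(b) The resulting sum over $r$ is the signed binomial transform $\sum_x(-1)^x\binom{a}{x}Q_h(x;l,k)$. This equals $\frac{(h)_a(k+l+1-h)_a}{(l)_a(k)_a}$ for $a\le h$ and $0$ for $a>h$. Its vanishing for $h<\tilde{\sf S}$ is what truncates and shifts the $h$-sum. Combining the surviving ratio with $c(2^h_{k+l},d)$ via the absorption identity then produces $\tilde B$ together with $(d+1)_{h+\tilde{\sf S}}(d)_{k+l-h-\tilde{\sf S}}$.

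Without (a) and (b), the proposal does not reach the stated formula.
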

As outlined in Section \ref{sec:Intro}, the proof of Theorem \ref{thm:MainTheorem} relies on the Weingarten calculus (see Section \ref{sec:WeinCalc}) to reduce the expectation to a sum over partitions $\lambda$ of $k+l$ of irreducible characters averaged over the Young subgroup $S_k \times S_l$ of the symmetric group $S_{k+l}$. A key step consists in recognizing that the averaged irreducible characters are precisely the spherical functions of the Gelfand pair $(S_{k+l}, S_k \times S_l)$, thus reducing the sum over all partitions to a sum over partitions of a very specific type. We then exploit known formulas for the spherical functions of the Gelfand pair $(S_{k+l}, S_k \times S_l)$ in terms of Hahn polynomials (see Section \ref{sec:HahnPoly}), which finally leads us to a tractable, though quite technical, combinatorial problem.

\section{Background and Notation}
In this section we recall some background material that is essential for the proof of Theorem \ref{thm:MainTheorem}.

\subsection{Weingarten Calculus}
\label{sec:WeinCalc}
Let $U_d$ be the compact group of $d \times d$ complex unitary matrices, which we equip with its  Haar probability measure ${\rm d}U$. We denote ${\bf u}_{ij}: U_d \to \mathbb{C}$ the random variable that maps a unitary matrix $U = (u_{ij})_{1 \leq i,j \leq d}$ to its $(i,j)$-th entry $u_{ij}$.
The {\it Weingarten calculus} is a family of combinatorial techniques to calculate integrals of polynomial functions on $U_d$. Such integrals were first studied by Weingarten in \cite{weingarten1978asymptotic}. His techniques were significantly extended by Collins in \cite{collins2003moments} and Collins and {\'S}niady in \cite{collins2006integration}, who proved the following theorem.
\begin{theorem*}\cite[Corollary 2.4]{collins2006integration}
	Let ${\bf i},{\bf j}:[n]\to [d]$ and ${\bf i'},    {\bf j'}:[n'] \to [d]$ for positive integers $n,n' \geq 1$. Then
	$$
		\int_{U_d} {\bf u}_{{\bf i}(1){\bf j}(1)}...{\bf u}_{{\bf i}(n){\bf j}(n)} \bar{{\bf u}}_{{\bf i'}(1){\bf j'}(1)}...\bar{{\bf u}}_{{\bf i'}(n'){\bf j'}(n')} {\rm d}U = \sum_{\pi, \sigma \in S_n} \delta_{{\bf i},{\bf i'\circ \pi}}\delta_{{\bf j},{\bf j'\circ \sigma}} {\rm Wg}_d(\pi, \sigma)
	$$
	if $n = n'$ and 0 otherwise.
	\newline Here $\delta_{{\bf i},{\bf i'\circ \pi}} = \delta_{{\bf i}(1){\bf i'}(\pi(1))}...\delta_{{\bf i}(n){\bf i'}(\pi(n))}$ and ${\rm Wg}_d: S_n \times S_n  \to \mathbb{C}$ is a class function known as the {\rm Weingarten function}.
\end{theorem*}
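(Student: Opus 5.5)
The plan is to derive the formula from the invariance properties of Haar measure together with Schur--Weyl duality, realising ${\rm Wg}_d$ as the (pseudo-)inverse of a Gram matrix indexed by $S_n$.

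\emph{Step 1: reduce to $n=n'$.} The Haar measure is invariant under $U\mapsto e^{i\theta}U$. Under this substitution the integrand is multiplied by $e^{i(n-n')\theta}$. Hence the integral vanishes unless $n=n'$, and from now on we take $n=n'$.

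\emph{Step 2: identify the averaging operator.} Consider the operator $\Phi$ on ${\rm End}((\mathbb{C}^d)^{\otimes n})$ given by
$$\Phi(X)=\int_{U_d} U^{\otimes n}X\,(U^{*})^{\otimes n}\,{\rm d}U .$$
Its matrix coefficients, taken in the standard basis $e_{\bf i}=e_{{\bf i}(1)}\otimes\cdots\otimes e_{{\bf i}(n)}$, are exactly the integrals in the statement. This follows by expanding
$$\langle e_{\bf i},\Phi(e_{\bf j}e_{{\bf j}'}^{*})e_{{\bf i}'}\rangle=\int u_{{\bf i}{\bf j}}\,\bar u_{{\bf i}'{\bf j}'}\,{\rm d}U .$$
By left and right invariance of ${\rm d}U$, the operator $\Phi$ is the orthogonal projection, for the Hilbert--Schmidt inner product, onto the commutant of $\{U^{\otimes n}\}$. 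Schur--Weyl duality identifies this commutant with the span of the permutation operators $\rho(\sigma)$, $\sigma\in S_n$, where $\rho(\sigma)e_{\bf i}=e_{{\bf i}\circ\sigma^{-1}}$.

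\emph{Step 3: write the projection via the Gram matrix.} Let
$$G(\sigma,\tau)={\rm Tr}\big(\rho(\sigma)^{*}\rho(\tau)\big)=d^{\#{\rm cycles}(\sigma^{-1}\tau)} .$$
The orthogonal projection onto ${\rm span}\{\rho(\sigma)\}$ is then
$$\Phi(X)=\sum_{\pi,\sigma}{\rm Wg}_d(\pi,\sigma)\,{\rm Tr}\big(\rho(\pi)^{*}X\big)\,\rho(\sigma),$$
where ${\rm Wg}_d$ is the inverse of $G$, or its Moore--Penrose pseudo-inverse when $d<n$ and the $\rho(\sigma)$ become linearly dependent. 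This formula is correct even when the spanning family is not a basis.

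\emph{Step 4: read off coefficients and the class-function property.} Take $X=e_{\bf j}e_{{\bf j}'}^{*}$. Then ${\rm Tr}(\rho(\pi)^{*}X)$ is a product of Kronecker deltas $\delta_{{\bf j},{\bf j}'\circ\pi}$ (up to inverting $\pi$). Likewise $\langle e_{\bf i},\rho(\sigma)e_{{\bf i}'}\rangle=\delta_{{\bf i},{\bf i}'\circ\sigma}$. Reindexing the sum over $S_n\times S_n$ gives exactly the stated formula. The matrix $G$ depends only on the conjugacy class of $\sigma^{-1}\tau$ and commutes with left and right translations, so its (pseudo-)inverse does too. Hence ${\rm Wg}_d(\pi,\sigma)$ is a class function of $\pi^{-1}\sigma$. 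Explicitly, it is given by the Fourier expansion
$${\rm Wg}_d=\frac{1}{(n!)^2}\sum_{\lambda}\frac{(\dim\lambda)^2}{s_\lambda(1^d)}\chi^\lambda ,$$
where the sum runs over $\lambda\vdash n$ with $\ell(\lambda)\le d$.

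\emph{Main obstacle.} The main obstacle is Step 2, namely the identification of the commutant with the span of the permutation operators (Schur--Weyl duality), together with careful handling of the degenerate case $d<n$ via the pseudo-inverse. I would invoke the double commutant theorem for the commuting actions of $GL_d$ and $S_n$ rather than reprove it. The index bookkeeping in Step 4 (whether $\pi$ or $\pi^{-1}$ appears) is routine, since the sum runs over all of $S_n$.
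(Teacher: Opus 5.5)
The paper does not prove this statement. It quotes it from \cite{collins2006integration} and uses it as a black box, so there is no internal argument to compare with. Your proof is correct, and it is essentially the argument of that source. There, $X\mapsto\int U^{\otimes n}X(U^*)^{\otimes n}\,\mathrm{d}U$ is the conditional expectation onto the commutant, Schur--Weyl duality identifies the commutant with the image of $\mathbb{C}[S_n]$, and $\mathrm{Wg}_d$ is the pseudo-inverse of the central element $\sum_\sigma d^{\#(\sigma)}\sigma$. Your Gram matrix $G$ is exactly multiplication by this element on $\mathbb{C}[S_n]$, so the two formulations coincide.

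A few points deserve a line each when you write it out.

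\emph{Index conventions.} With your convention $\rho(\sigma)e_{\bf i}=e_{{\bf i}\circ\sigma^{-1}}$ one gets $\langle e_{\bf i},\rho(\sigma)e_{{\bf i}'}\rangle=\delta_{{\bf i},{\bf i}'\circ\sigma^{-1}}$. Also, Step 3 pairs $\pi$ with ${\bf j}$ and $\sigma$ with ${\bf i}$, the reverse of the statement. Both are harmless: $\mathrm{Wg}_d(\pi,\sigma)=w(\pi^{-1}\sigma)$ with $w$ a class function, and every permutation is conjugate to its inverse. Hence $\mathrm{Wg}_d$ is symmetric and invariant under $(\pi,\sigma)\mapsto(\pi^{-1},\sigma^{-1})$.

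\emph{Step 2 ingredients.} Self-adjointness of $\Phi$ for the Hilbert--Schmidt product uses invariance of Haar measure under $U\mapsto U^{-1}$. Schur--Weyl duality for $GL_d$ transfers to $U_d$ because a polynomial in the matrix entries that vanishes on $U_d$ vanishes on $GL_d(\mathbb{C})$. So $\{U^{\otimes n}\}$ and $\{g^{\otimes n}\}$ span the same algebra.

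\emph{The character formula.} You state it without derivation. It is not needed for the statement itself, but the paper uses it next, and it takes one line. From $d^{\ell(\mu)}=p_\mu(1^d)=\sum_\lambda\chi^\lambda_\mu s_\lambda(1^d)$ one gets $\sum_\sigma d^{\#(\sigma)}\sigma=\sum_\lambda\frac{n!}{\dim\lambda}s_\lambda(1^d)E_\lambda$, where $E_\lambda$ are the central idempotents. The Moore--Penrose pseudo-inverse therefore inverts exactly the blocks with $s_\lambda(1^d)\neq0$, i.e.\ $\ell(\lambda)\le d$.

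This identification is genuinely a separate step. When $d<n$, any generalized inverse of $G$ yields the same operator $\Phi$, so the integration formula alone does not determine $\mathrm{Wg}_d$. The Moore--Penrose choice is what produces the specific formula used later in the paper.
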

In \cite{collins2006integration}, Collins and {\'S}niady also provide an explicit formula for the Weingarten function ${\rm Wg}_d$ in terms of the irreducible  characters of $S_n$.
\begin{proposition*}\cite[Proposition 2.3, point 2]{collins2006integration}
	Let $n,d \geq 1$ be positive integers. For $\pi, \sigma \in S_n$ we have
	$$
		{\rm Wg}_d(\pi,\sigma) = \frac{1}{(n!)^2}\sum_{\substack{\lambda \vdash n\\ l(\lambda) \leq d}} \frac{\chi^{\lambda}({\rm id})^2}{s_\lambda (1^d)} \chi^{\lambda}(\pi ^{-1}\sigma),
	$$
	where $\chi^{\lambda}$ and $s_\lambda$ are respectively  the irreducible character and the Schur function associated to the partition $\lambda \vdash n$.
\end{proposition*}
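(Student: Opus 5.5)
The plan is to prove the character formula by identifying ${\rm Wg}_d$ as the (pseudo-)inverse, inside the group algebra $\mathbb{C}[S_n]$, of the Gram matrix of permutation operators, and then to diagonalize that Gram matrix with the central idempotents of $\mathbb{C}[S_n]$.

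\textbf{Reduction to a Gram matrix.} Let $\rho:S_n\to {\rm End}((\mathbb{C}^d)^{\otimes n})$ be the action permuting tensor factors. For any operator $X$, the average $\Phi(X)=\int_{U_d} U^{\otimes n}X U^{*\otimes n}\,{\rm d}U$ is the orthogonal projection, for the Hilbert--Schmidt inner product, onto the commutant of $\{U^{\otimes n}\}$. By Schur--Weyl duality this commutant is ${\rm span}\{\rho(\sigma):\sigma\in S_n\}$. Hence $\Phi(X)=\sum_{\pi}c_\pi\rho(\pi)$, where the coefficients solve the normal equations
$$\sum_\pi G(\sigma,\pi)c_\pi={\rm Tr}(\rho(\sigma)^*X), \qquad G(\sigma,\pi)={\rm Tr}(\rho(\sigma^{-1}\pi))=d^{\#(\sigma^{-1}\pi)},$$
with $\#$ the number of cycles. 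Taking $X$ to be a tensor of matrix units and reading off matrix entries gives exactly the displayed integration formula, with ${\rm Wg}_d$ any kernel satisfying $G\,{\rm Wg}_d\,G=G$. Concretely, ${\rm Wg}_d$ is the Moore--Penrose pseudo-inverse of $G$. The result is unambiguous because any element of $\ker G$ maps to $0$ under $\rho$. Since $G$ depends only on $\sigma^{-1}\pi$, we may view it as the central element $g=\sum_\tau d^{\#(\tau)}\tau\in\mathbb{C}[S_n]$, and then ${\rm Wg}_d$ corresponds to a central element $w$ with $gwg=g$ in the ideal generated by $g$.

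\textbf{Diagonalization.} Use the Frobenius expansion $p_\mu(1^d)=d^{\ell(\mu)}=\sum_{\lambda\vdash n}\chi^\lambda(\mu)s_\lambda(1^d)$. Here $s_\lambda(1^d)=0$ exactly when $l(\lambda)>d$. With the central idempotents $e_\lambda=\frac{\chi^\lambda({\rm id})}{n!}\sum_\tau\chi^\lambda(\tau)\tau$, this gives
$$g=\sum_{l(\lambda)\le d}\frac{n!\,s_\lambda(1^d)}{\chi^\lambda({\rm id})}\,e_\lambda .$$
Because the $e_\lambda$ are orthogonal idempotents, the pseudo-inverse is obtained by inverting each nonzero eigenvalue:
$$w=\sum_{l(\lambda)\le d}\frac{\chi^\lambda({\rm id})}{n!\,s_\lambda(1^d)}e_\lambda=\frac{1}{(n!)^2}\sum_{l(\lambda)\le d}\frac{\chi^\lambda({\rm id})^2}{s_\lambda(1^d)}\sum_\tau\chi^\lambda(\tau)\tau .$$
The coefficient of $\pi^{-1}\sigma$ in $w$ is the claimed formula. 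The chosen normalization of the Haar measure and of $\rho$ fixes any leftover scalar.

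\textbf{Main obstacle.} The delicate point is the case $d<n$, where $G$ is singular. There I must check two things. First, $\rho$ restricted to $\bigoplus_{l(\lambda)\le d}\mathbb{C}[S_n]e_\lambda$ is injective, while $\rho(e_\lambda)=0$ for $l(\lambda)>d$; both follow from Schur--Weyl duality. Second, the pseudo-inverse choice genuinely yields the integral, not merely some solution of the normal equations. I would handle this by working entirely in the image algebra $\rho(\mathbb{C}[S_n])$, where $\rho(g)$ is invertible with inverse $\rho(w)$, and by verifying directly that $\Phi(X)=\sum_{\pi,\sigma}{\rm Tr}(\rho(\sigma)^*X)\,w(\sigma^{-1}\pi)\rho(\pi)$ is idempotent and fixes each $\rho(\tau)$.
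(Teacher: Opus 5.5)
Your proof is correct. The paper itself gives no argument for this proposition; it is imported verbatim from Collins--\'Sniady. Your route is essentially the proof in that source: you identify $\int U^{\otimes n}XU^{*\otimes n}\,{\rm d}U$ with the Hilbert--Schmidt projection onto $\rho(\mathbb{C}[S_n])$ via Schur--Weyl duality, realize ${\rm Wg}_d$ as the pseudo-inverse of $g=\sum_\tau d^{\#(\tau)}\tau$, and diagonalize $g$ on the central idempotents using $p_\mu(1^d)=\sum_\lambda\chi^\lambda(\mu)s_\lambda(1^d)$.

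Two small remarks.

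First, the second check in your last paragraph is automatic. With $V\colon c\mapsto\sum_\pi c_\pi\rho(\pi)$ you have $G=V^*V$. The right-hand side of the normal equations is $V^*X$, which lies in the range of $V^*$, and that range equals the range of $G$, so the normal equations are always consistent. Hence any $W$ with $GWG=G$ produces a solution, and all solutions give the same $\Phi(X)$ because $\ker G=\ker V$. You only need Schur--Weyl duality to identify the commutant, not for the injectivity statement. Likewise there is no leftover scalar to fix: the probability normalization of ${\rm d}U$ is exactly what makes $\Phi$ a projection.

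Second, say explicitly that for $d<n$ the integration formula alone does not determine ${\rm Wg}_d$. For $n=2$, $d=1$ it only imposes the single constraint $\sum_{\pi,\sigma\in S_2}{\rm Wg}_1(\pi,\sigma)=1$. So the proposition should be read with ${\rm Wg}_d$ defined as this pseudo-inverse, which is the normalization you adopt and the one Collins--\'Sniady use.
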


For $\lambda$ a partition of $k+l$ we write $c(\lambda, d)$ for the coefficient
\begin{equation}
\label{eqn:coeffc}
c(\lambda, d) = \frac{1}{(k+l)!^2}\frac{\chi^{\lambda}({\rm id})^2}{s_\lambda(1^d)},
\end{equation}
where $l(\lambda) \leq k+l$.
\subsection{Symmetric Polynomials} \label{sec:SymmPoly} We refer the reader to the monograph \cite{macdonald1998symmetric}.
We denote an integer partition $\lambda$ either $\lambda = (\lambda_1,...,\lambda_n)$ or $\lambda = (1^{m_1(\lambda)},2^{m_2(\lambda)},...)$, where $m_i(\lambda)$ denotes the multiplicity of $i$ in $\lambda$. We denote $l(\lambda) = \sum_i m_i(\lambda)$ the length of the partition $\lambda$. Recall that the ring of symmetric polynomials $\Lambda_n$ is the sub-ring of fixed polynomials of the  ring $\mathbb{Z}[x_1,...,x_n]$ under the action of $S_n$ that permutes the variables.
If $\lambda$ is a partition with $l(\lambda) \leq n$  the {\it monomial symmetric polynomial } $M_\lambda$ associated to the partition $\lambda$ is the symmetric polynomial
$$
	M_\lambda = \sum_\alpha x^\alpha,
$$
where $\alpha = (\alpha_1,...,\alpha_n)$ ranges through all permutations of $\lambda = (\lambda_1,...,\lambda_n)$ and $x^\alpha = x_1^{\alpha_1}...x_n^{\alpha_n}$ (if $l(\lambda) < n$ we pad the partition $\lambda$ with zeros at the end). Monomial symmetric polynomials naturally appear in Theorem \ref{thm:MainTheorem} in the following way.
\begin{proposition}
\label{prop:MonSymmFun}
Let $\{a_i\}_{i=1}^d \subset \mathbb{C}$ be a set of complex numbers. Then
$$
\sum_{\substack{I,J \subseteq [d]\\ |I| = k, |J| = l \\ |I \cap J| =m}}
        \Big(\prod_{i\in I} a_i\Big)
       \Big(\prod_{j\in J} a_j\Big) = \binom{k+l-2m}{k-m} M_m(a),
$$
where $M_m(a)$ is the monomial symmetric polynomial associated to the two-column partition $\lambda = (2^{m},1^{k+l-2m})$ in the variables $a_1,...,a_d$.
\end{proposition}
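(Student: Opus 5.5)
We prove Proposition \ref{prop:MonSymmFun} by expanding the left-hand side monomial by monomial and counting how often each monomial occurs.

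\emph{Step 1: identify the monomial.} Fix subsets $I,J\subseteq[d]$ with $|I|=k$, $|J|=l$ and $|I\cap J|=m$. Then
\[
\Big(\prod_{i\in I} a_i\Big)\Big(\prod_{j\in J} a_j\Big)=\prod_{i\in I\cap J} a_i^2\prod_{i\in I\triangle J} a_i .
\]
This is a monomial of type $(2^m,1^{k+l-2m})$. Its support $I\cup J$ has size $k+l-m$. It is determined by two sets:
\begin{itemize}
\item the set $T=I\cap J$ of squared variables, with $|T|=m$;
\item the set $S=I\triangle J$ of variables appearing to the first power, with $|S|=k+l-2m$ and $S\cap T=\emptyset$.
\end{itemize}

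\emph{Step 2: count the preimages.} Conversely, fix disjoint sets $T,S\subseteq[d]$ with $|T|=m$ and $|S|=k+l-2m$. We count the pairs $(I,J)$ satisfying the three size constraints with $I\cap J=T$ and $I\triangle J=S$. Such a pair is determined by splitting $S$ into $I\setminus J$ and $J\setminus I$. Since $|I|=k$, the part $I\setminus J$ must have size $k-m$, and then $J\setminus I$ automatically has size $l-m$. Conversely, any such split gives a valid pair $(I,J)$ with the required intersection size. Hence each monomial of type $(2^m,1^{k+l-2m})$ is obtained exactly $\binom{k+l-2m}{k-m}$ times.

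\emph{Step 3: sum over monomials.} Every distinct monomial $a^\alpha$ with $\alpha$ a rearrangement of $(2^m,1^{k+l-2m},0,\dots)$ corresponds to exactly one such pair $(T,S)$. Summing over all pairs therefore gives
\[
\binom{k+l-2m}{k-m}\sum_\alpha a^\alpha=\binom{k+l-2m}{k-m}M_m(a).
\]

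\emph{Degenerate cases.} If $k+l-m>d$, no admissible $(I,J)$ exists and no monomial of that type exists in $d$ variables, so both sides vanish. The case $m>k$ is likewise empty on both sides.

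No step is a real obstacle. The only point needing care is that the bijection in Step 2 is with ordered splits of $S$ into a part of size $k-m$ and a part of size $l-m$. Consequently there is no symmetry factor, even when $k=l$.
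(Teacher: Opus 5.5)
Your proof is correct and follows essentially the same route as the paper's. Both identify each term with the monomial that is squared on $I\cap J$ and linear on $I\Delta J$. Both then count the $\binom{k+l-2m}{k-m}$ ways of choosing which $k-m$ elements of $I\Delta J$ belong to $I$.
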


\begin{proof}
    Note that the subsets $(I \cup J) \setminus (I\cap J)$, $I \cap J$ and $[d] \setminus (I \cup J)$ form a partition of $[d]$. Given such a partition we form a sequence of $d$ numbers $\lambda = (\lambda_1,...,\lambda_d)$ where 
$$
\lambda_k = \begin{cases}
0, & \text{if $k \in [d]\setminus (I\cup J)$} \\
1, & \text{if $k \in I \cup J \setminus (I\cap J)$} \\
2, &  \text{if $k \in I \cap J$}.
\end{cases}
$$
It then follows that, for fixed subsets $I,J$, we have 
$$
\Big(\prod_{i\in I} a_i\Big)
       \Big(\prod_{j\in J} a_j\Big) = a_1^{\lambda_1}...a_d^{\lambda_d} = a^\lambda.
$$
If we fix the cardinality of $I \cap J$ to be $m$ it is clear that the associated sequence $\lambda$ will contain a total of $m$ twos, $k+l-2m$ ones and $d+m-k-l$ zeros. Furthermore, as we range over all $I, J$ such that $|I \cap J | = m$ it is clear that we obtain all possible sequences $\lambda$ containing a total of $m$ twos, $k+l-2m$ ones and $d+m-k-l$ zeros. In fact, for a fixed sequence $\lambda$ there are a total of $\binom{k+l-2m}{k-m}$ choices of the subsets $I$ and $J$ that realize $\lambda$. This follows from the fact that a given $\lambda$ fixes the $d+m-k-l$ elements in $[d]\setminus (I \cup J)$ and the $m$ elements in $I \cap J$, since the indices where the sequence is 0 must be in $[d]\setminus (I \cup J)$ while the indices where the sequence is $2$ must be in $I \cap J$. We are therefore free to choose which of the indices where the sequence is $1$ belong to $I \setminus (I \cap J)$. Since $|I|=k$, we are free to choose $k-m$ indices amongst the $k+l-2m$ indices where the sequence is equal to 1. By definition of the monomial symmetric polynomial associated to the partition $\lambda = (2^m,1^{k+l-2m})$ we arrive at the desired expression.
\end{proof}

\subsection{Hahn Polynomials}
\label{sec:HahnPoly} We refer the reader to the monographs \cite{ceccherini1945harmonic,ceccherini1945harmonic} for a very comprehensive exposition about the notion of Gelfand pairs.
Recall that a pair of finite groups $(G,K)$ with $K \leqslant G$ a subgroup of $G$ is called a {\it{Gelfand pair}} if the trivial representation of $K$ induces a multiplicity-free representation of $G$. 
To a Gelfand pair $(G,K)$ one can associate its family of {\it spherical functions}.
\begin{definition}[Spherical functions]
	Let $(G,K)$ be a Gelfand pair. Let ${\rm Ind}_K^G(1) = \oplus_{i = 1 }^s V_i$ be the decomposition of the induced trivial representation into non-isomorphic irreducible representations $V_i$ and let $\chi_i$ denote the character associated to $V_i$. The {\it spherical function} associated to the irreducible representation $V_i$ is the function $\omega_i : G \to \mathbb{C}$ defined by
	$$
		\omega_i(g) = \frac{1}{|K|}\sum_{k \in K} \chi_i(g^{-1}k).
	$$
\end{definition}
In the case of the Gelfand pair $(S_{k+l}, S_k \times S_l)$, explicit formulas for the spherical functions are known. At this point, we insist on the fact that the proof of Theorem \ref{thm:MainTheorem} utilizes explicit formulas for the spherical functions of the Gelfand pair $(S_{k+l}, S_k \times S_l)$, but it would be far more satisfactory to give a conceptual argument on the appearance of the binomial transform which collapses the hahn polynomials to a simple combinatorial coefficient. 
\begin{theorem*}\cite[Theorem 6.2.3, Remark 6.2.4] {ceccherini1945harmonic}
	Let $0\leq h\leq \min(k,l)$ and $\lambda = (k+l-h,h)$. Let $I$ be a $k$-element subset of $[k+l]$. Write $\delta_{I,x}$ for the function on $S_{k+l}$ whose value on $\sigma \in S_{k+l}$ is 1 if $|\sigma(I) \cap I| = k-x$ and 0 otherwise. The spherical function $\omega_h$ associated to the irreducible representation of $S_{k+l}$ indexed by the partition $\lambda$ 
	\begin{equation*}
		\omega_h = \sum_{x=0}^{\min(k,l)} Q_h(x;l,k) \delta_{I,x},
	\end{equation*}
	where the $Q_h(x;l,k)$ are a family of orthogonal polynomials known as {\rm Hahn polynomials}.
\end{theorem*}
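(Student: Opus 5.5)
The plan is to use the geometry of the homogeneous space $S_{k+l}/(S_k\times S_l)$. I assume $k\le l$ and write $n=k+l$. Identify the coset space with the set $X$ of $k$-element subsets of $[n]$, the base point being $I$. A function on $S_n$ that is bi-invariant under $K=S_k\times S_l$ depends only on the $K$-orbit of $\sigma(I)$. These orbits are classified by $|\sigma(I)\cap I|=k-x$ with $0\le x\le k$, because $K$ acts transitively on pairs of subsets (one in $I$, one in $[n]\setminus I$) of prescribed sizes. Spherical functions are bi-$K$-invariant, since $\omega_i(k_1gk_2)=\omega_i(g)$ follows from the class-function property of $\chi_i$ and the averaging over $K$. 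Hence every spherical function can be written as $\sum_x \omega(x)\,\delta_{I,x}$, and it remains to identify the scalars $\omega_h(x)$.

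\textbf{Step 1: which representations occur.} By Young's rule, $\mathrm{Ind}_K^G(1)=M^{(n-k,k)}=\bigoplus_{h=0}^{k}S^{(n-h,h)}$. Each summand appears with multiplicity one, which also reproves the Gelfand property. This explains why exactly the partitions $\lambda=(n-h,h)$ with $0\le h\le \min(k,l)$ appear.

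\textbf{Step 2: characterize $\omega_h$ by an eigenvalue equation.} Let $A$ be the adjacency operator of the Johnson graph $J(n,k)$ on $\mathbb{C}[X]$, where two subsets are adjacent when they meet in $k-1$ points. Equivalently, $A$ is right convolution by $\delta_{I,1}$. The operator $A$ lies in the commutative Hecke algebra, so it acts on the isotypic component $S^{(n-h,h)}$ by a scalar $\theta_h$. I would compute this scalar by the standard argument: $A$ is $k(n-k)$ minus the Laplacian, and on $S^{(n-h,h)}$ it equals the Jucys--Murphy/content sum of transpositions shifted appropriately, which gives $\theta_h=(k-h)(l-h)-h$. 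The spherical function is the unique $K$-invariant vector in $S^{(n-h,h)}$ normalized by $\omega_h(\mathrm{id})=1$. Evaluating $A\omega_h=\theta_h\omega_h$ at a point at distance $x$, and counting neighbours with the intersection numbers of $J(n,k)$, gives the three-term recurrence
\[
\theta_h\,\omega_h(x)=(k-x)(l-x)\,\omega_h(x+1)+x\big(k+l-2x\big)\,\omega_h(x)+x^2\,\omega_h(x-1),
\]
together with $\omega_h(0)=1$. The recurrence has this form because from distance $x$ one can move to distance $x+1$ in $(k-x)(l-x)$ ways and to distance $x-1$ in $x^2$ ways.

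\textbf{Step 3: match with Hahn polynomials.} The Hahn polynomials
\[
Q_h(x;l,k)={}_3F_2\!\left(-h,\;h-n-1,\;-x;\;-k,\;-l;\;1\right)
\]
satisfy exactly this difference equation in $x$, with eigenvalue $h(h-n-1)$. This eigenvalue is $\theta_h-kl$ after the usual affine change, and the normalization is $Q_h(0)=1$. The recurrence determines $\omega_h(x)$ uniquely from $\omega_h(0)=1$, since the leading coefficient $(k-x)(l-x)$ does not vanish for $x<k$. Therefore $\omega_h(x)=Q_h(x;l,k)$.

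The main obstacle is Step 2. One must get the intersection numbers and the eigenvalue $\theta_h$ exactly right, and then reconcile the resulting recurrence with the chosen normalization and parameter order of the Hahn polynomials. A sign or shift error there would identify the wrong family. As a cross-check I would verify the cases $h=0$, where $\omega_0\equiv1$, and $h=1$, where $\omega_1(x)=1-\frac{nx}{kl}$ can be read off from the permutation character minus the trivial character.
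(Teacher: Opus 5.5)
Your proof is correct. It necessarily takes a different route from the paper, which gives no argument for this statement: it is quoted from Ceccherini-Silberstein--Scarabotti--Tolli and used only through $\omega_h(\sigma)=Q_h(k-|\sigma(I)\cap I|;l,k)$.

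You give the distance-regular-graph derivation, and the points you flagged as risky all check out:
\begin{itemize}
\item The sum of all transpositions acts on $\mathbb{C}[X]$ as $A+\binom{k}{2}+\binom{l}{2}$, and on $S^{(n-h,h)}$ by the content sum $\binom{n-h}{2}+\binom{h}{2}-h$. This gives $\theta_h=(k-h)(l-h)-h$.
\item The intersection numbers $(k-x)(l-x)$, $x(k+l-2x)$ and $x^2$ are right.
\item Taking $\alpha=-l-1$, $\beta=-k-1$, $N=k$ in the standard Hahn difference equation gives exactly $h(h-n-1)\,y(x)=(k-x)(l-x)\,[y(x+1)-y(x)]+x^2\,[y(x-1)-y(x)]$. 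This is your recurrence written for $A-kl$, with the same ${}_3F_2$ as in the paper.
\end{itemize}

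Two small points deserve a line each. First, the paper defines $\omega_h$ as an averaged character, so you should say why that function is the normalized $K$-invariant vector of the $S^{(n-h,h)}$-component. It is bi-$K$-invariant, it is a matrix coefficient of $S^{(n-h,h)}$, and its value at the identity is $\dim (S^{(n-h,h)})^{K}=1$. Second, right convolution by $\delta_{I,1}$ is $|K|\,A$, not $A$; this is harmless. Your assumption $k\le l$ is also harmless, since everything is symmetric in $(k,l)$ and $(k-x)(l-x)\neq 0$ for $x<\min(k,l)$.

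Compared with the bare citation, your route gives a self-contained explanation of why Hahn polynomials appear at all: they are the radial eigenfunctions of the Johnson-graph Laplacian. It also independently checks the normalization $Q_h(0;l,k)=1$ and the parameter order $(-l,-k)$ that the paper feeds into Proposition~\ref{prop:binomialtransform}. The price is that the ${}_3F_2$ is recognised rather than derived, because the Hahn difference equation enters as an external input. A constructive computation of the $K$-invariant vector would instead produce the finite sum directly.
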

Note in particular that given $\sigma \in S_{k+l}$ we have
$$
	\omega_h(\sigma) = Q_h(k-|\sigma(I) \cap I|;l,k).
$$
The Hahn polynomials are defined by
\begin{align*}
	Q_h(x;l,k) = \frac{1}{\binom{k}{h}} \sum_{j=0}^h (-1)^j \frac{\binom{k-h+j}{j}}{\binom{l}{j}} \binom{k-x}{h-j} \binom{x}{j}
\end{align*}
for any $h = 0,\ldots,\min(k,l)$ and $x = 0,\ldots,\min(k,l)$. They are orthogonal polynomials with respect to the hypergeometric distribution
\begin{align*}
	h(x)= \frac{\binom{k}{x}\binom{l}{l-x}}{\binom{k+l}{k}},\quad x \leq \min(k,l).
\end{align*}

and admit the following representation as a ${}_3F_2$ hypergeometric function:
\begin{align*}
	Q_h(x;l,k) = {}_3F_2(\begin{matrix}-h, h-l-k-1, -x \\ -l, -k\end{matrix}; 1).
\end{align*}
See \cite{Karlin1961} for a detailed treatment of the Hahn polynomials and their many interesting properties. 
We will see throughout the proof of Theorem \ref{thm:MainTheorem} that the signed binomial transform of the Hahn polynomials
$$
	\tilde{Q}_h(a)=\sum_{x=0}^k\binom{a}{x}(-1)^{x}Q_h(x;l,k)
$$
appears naturally. We compute it here for the sake of clarity. In what follows, given positive integers $d,n\geq 1$, we denote $(d)_{\bar{n}}= d(d+1)...(d+n-1)$ the \emph{rising factorial} (note the bar on the $n$). The following simple relation holds $(d)_n = (-1)^n(d)_{\bar{n}}$.

\begin{proposition}\label{prop:binomialtransform} 
Let $a,h$ be integers such that $0 \leq a,h \leq k$. Then we have
	\begin{align}
		\tilde{Q}_h(a) & = \frac{(h)_a(l+k+1-h)_a}{(l)_a(k)_a} \text{ if } a \leq h,
	\end{align}
	and $0$ otherwise.
\end{proposition}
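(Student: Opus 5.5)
The plan is to compute $\tilde{Q}_h(a)$ directly from the explicit finite-sum formula for $Q_h(x;l,k)$. I will exchange the order of summation, evaluate the inner alternating sum over $x$ as a finite difference, and then recognize the remaining sum over $j$ as a terminating ${}_2F_1$ at $1$, which Chu--Vandermonde evaluates. Throughout, $(n)_a$ denotes the falling factorial. Since $(h)_a=0$ when $a>h$, the two cases of the statement can be treated together: it suffices to show that the sum vanishes for $a>h$ and equals the product formula otherwise.

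\emph{Step 1 (sum over $x$).} Substituting the definition gives
\[
\tilde{Q}_h(a)=\frac{1}{\binom{k}{h}}\sum_{j=0}^h (-1)^j\frac{\binom{k-h+j}{j}}{\binom{l}{j}}\sum_{x}(-1)^x\binom{a}{x}\binom{x}{j}\binom{k-x}{h-j}.
\]
I will use $\binom{a}{x}\binom{x}{j}=\binom{a}{j}\binom{a-j}{x-j}$ and set $x'=x-j$. The inner sum then becomes
\[
(-1)^j\binom{a}{j}\sum_{x'}(-1)^{x'}\binom{a-j}{x'}\binom{k-j-x'}{h-j}.
\]
This is an $(a-j)$-th finite difference of a polynomial in $x'$ of degree $h-j$. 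By the standard identity $\sum_i(-1)^i\binom{n}{i}\binom{m-i}{r}=\binom{m-n}{r-n}$ (provable by comparing coefficients in $(1-t)^n(1-t)^{-r-1}$, or by induction on $n$), it equals $\binom{k-a}{h-a}$. In particular it vanishes when $a>h$. The signs $(-1)^j$ cancel, leaving
\[
\tilde{Q}_h(a)=\frac{\binom{k-a}{h-a}}{\binom{k}{h}}\sum_{j}\frac{\binom{k-h+j}{j}\binom{a}{j}}{\binom{l}{j}}.
\]
The range conditions ($x\le k$, $a\le k$) must be checked so that no boundary terms are lost; this is harmless because all the binomials vanish outside the natural range.

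\emph{Step 2 (sum over $j$).} I will write each factor in rising-factorial form:
\[
\binom{k-h+j}{j}=\frac{(k-h+1)_{\bar j}}{j!},\qquad \binom{a}{j}=\frac{(-1)^j(-a)_{\bar j}}{j!},\qquad \binom{l}{j}^{-1}=\frac{(-1)^j\, j!}{(-l)_{\bar j}}.
\]
With these, the sum becomes ${}_2F_1(-a,\,k-h+1;\,-l;\,1)$. Since $a\le k\le l$, the series terminates before the denominator parameter $-l$ can produce a zero, so Chu--Vandermonde applies and gives
\[
\frac{(-l-k+h-1)_{\bar a}}{(-l)_{\bar a}}=\frac{(l+k+1-h)_a}{(l)_a}.
\]
Finally, $\binom{k-a}{h-a}/\binom{k}{h}=(h)_a/(k)_a$, which yields the claimed formula.

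A sanity check with $a=h=1$ gives $Q_1(x)=\frac{k-x}{k}-\frac{x}{l}$, so $\tilde{Q}_1(1)=\frac1k+\frac1l=\frac{l+k}{lk}$, in agreement with the formula. The main point needing care is Step 1: getting the finite-difference identity with the correct shift and confirming that it produces exactly $\binom{k-a}{h-a}$ independently of $j$. Once that is settled, Step 2 is a routine hypergeometric identification.
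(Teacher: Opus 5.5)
Your proof is correct, but it takes a different route from the paper.

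\textbf{The paper's route.} The paper starts from the ${}_3F_2$ representation. Written out, this is the expansion of $Q_h(x;l,k)$ in the Newton basis $\frac{(-x)_{\bar j}}{j!}=(-1)^j\binom{x}{j}$, with coefficients $\frac{(-h)_{\bar j}(h-l-k-1)_{\bar j}}{(-l)_{\bar j}(-k)_{\bar j}}$ for $j\le h$. Since $\sum_x(-1)^x\binom{a}{x}\binom{x}{j}=(-1)^j\delta_{j,a}$, the signed binomial transform picks out the single coefficient $j=a$. The formula then drops out with no further summation, and the vanishing for $a>h$ comes simply from $j$ running only up to $h$.

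\textbf{Your route.} You start from the explicit finite-sum formula, whose terms carry the extra factor $\binom{k-x}{h-j}$. The transform therefore no longer isolates one value of $j$. Instead, the finite-difference identity produces the $j$-independent factor $\binom{k-a}{h-a}$, which is where your vanishing for $a>h$ comes from. You then have to sum the leftover ${}_2F_1(-a,k-h+1;-l;1)$ by Chu--Vandermonde.

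\textbf{Checks.} Your steps are all valid:
\begin{enumerate}
\item The identity $\sum_i(-1)^i\binom{n}{i}\binom{m-i}{r}=\binom{m-n}{r-n}$ is used with $m-n=k-a\ge 0$, which is exactly the condition your generating-function proof needs.
\item The rising-factorial rewritings are right.
\item Chu--Vandermonde is legitimate because $(-l)_{\bar j}\neq 0$ for $j\le a$. In fact you only need $a\le h\le\min(k,l)\le l$ here, since the case $a>h$ is already killed by $\binom{k-a}{h-a}$. So the assumption $k\le l$ is not really used.
\item Your final simplification $\binom{k-a}{h-a}/\binom{k}{h}=(h)_a/(k)_a$ and your check at $a=h=1$ are correct.
\end{enumerate}

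\textbf{What each approach buys.} Yours costs one extra classical summation, but it works directly from the explicit definition. In effect it reproves, at the level of the transform, the equivalence of the two representations of $Q_h$, which the paper takes for granted. The paper's route is shorter and more transparent about \emph{why} the collapse happens: the signed binomial transform reads off the Newton-basis coefficients of $Q_h$ one at a time.
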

\begin{proof}
	From the $_3F_2$ representation of the Hahn polynomials we get
	\begin{align*}
		\tilde{Q}_h(a) & =\sum_{x=0}^k(-1)^{x}\binom{a}{x}{}_3F_2(\begin{matrix}-h, h-l-k-1, -x \\ -l, -k\end{matrix}; 1) \\
		               & =\sum_{x=0}^k(-1)^{x}\binom{a}{x} \sum_{j=0}^h \frac{(-h)_{\bar{j}}(h-l-k-1)_{\bar{j}}(-x)_{\bar{j}}}{(-l)_{\bar{j}}(-k)_{\bar{j}} j!}                      \\
		               & =\sum_{j=0}^h \frac{(-h)_{\bar{j}}(h-l-k-1)_{\bar{j}}}{(-l)_{\bar{j}}(-k)_{\bar{j}} j!} \sum_{x=0}^k(-1)^{x}\binom{a}{x}(-x)_{\bar{j}}                      \\
		               & =\sum_{j=0}^h (-1)^j\frac{(-h)_{\bar{j}}(h-l-k-1)_{\bar{j}}}{(-l)_{\bar{j}}(-k)_{\bar{j}}} \sum_{x=0}^k(-1)^{x}\binom{a}{x}\binom{x}{j}             \\
		               & = \sum_{j=0}^h (-1)^j\frac{(-h)_{\bar{j}}(h-l-k-1)_{\bar{j}}}{(-l)_{\bar{j}}(-k)_{\bar{j}}} \binom{a}{j}\sum_{x=j}^a(-1)^x\binom{a-j}{x-j}          \\
		               & = \sum_{j=0}^h \frac{(-h)_{\bar{j}}(h-l-k-1)_{\bar{j}}}{(-l)_{\bar{j}}(-k)_{\bar{j}}} \binom{a}{j}\delta_{j=a}                                      \\
		               & =\begin{cases}
			                  \frac{(-h)_{\bar{a}}(h-l-k-1)_{\bar{a}}}{(-l)_{\bar{a}}(-k)_{\bar{a}}} & \text{if } a \leq h, \\
			                  0                                      & \text{otherwise.}
		                  \end{cases}                                                      \\
	\end{align*}
Factoring out the negative signs and 
changing the rising factorials to falling factorials gives the result.
\end{proof}
\subsection{Notation} Let $I,J \subseteq \{1,...,d\}$ be two subsets and ${\bf p}: I \to \{1,...,d\}$, ${\bf q}: J \to \{1,...,d\}$ two set maps.
\label{sec:Notation}\begin{itemize}
	\item
	      To distinguish between the two copies of the set $I\cap J$ in $I\sqcup J$, we denote by $(I\cap J)_I$ the one that sits inside $I \subset I\sqcup J$ and $(I\cap J)_J$ the one that sits in $J \subset I\sqcup J$. If $x \in I\cap J$, we denote by $x_I$ (resp. $x_J$) the copy of $x$ in $(I\cap J)_I$ (resp. $(I\cap J)_J$).
    \item The map $\copairpq: I \sqcup J \to \{1,...,d\}$ is the {\it copairing} of ${\bf p}$ and ${\bf q}$. That is, $\copairpq$ maps $x \in I \subset I \sqcup J$ to ${\bf p}(x)$ and $y \in J \subset I \sqcup J$ to ${\bf q}(y)$.
	\item We write $S_{I\sqcup J}$ for the symmetric group of permutations of the set $I\sqcup J$. Given $\sigma \in S_I$ and $\tau \in S_J$ two permutations, we denote $\sigma \oplus \tau \in S_{I}\times S_J \subset S_{I \sqcup J}$ the permutation that acts as $\sigma$ on elements of $I\subset I \sqcup J$ and $\tau$ on elements of $J \subset I \sqcup J$.
	\item We write $\alpha \leq \pi$, with $\alpha$ a permutation of $I\sqcup J$ and $\pi$ a partition of $I \sqcup J$, if the cycles of $\alpha$ yield a partition of $I\sqcup J$ less than $\pi$.
	\item We denote by $c_2(\gamma)$ the number of 2-cycles in the cycle decomposition of a permutation $\gamma \in S_{I \sqcup J}$. 
	\item We write $\varepsilon(\gamma)$ for the signature of a permutation $\gamma$ of $I\sqcup J$. Note that if $\gamma$ is a product of disjoint 2-cycles, then $\varepsilon(\gamma) = (-1)^{c_2(\gamma)}$
    \item We denote by $\iota$ the canonical set map from $I \sqcup J$ to $I \cup J$.
\end{itemize}

\section{Preparatory calculations}
Proving our main result, Theorem \ref{thm:MainTheorem}, requires several steps; we take the first one in this section.
Let $d \geq 1$ be a positive integer and $U = (u_{ij})_{1\leq i,j \leq d}$ a Haar-distributed unitary matrix of size $d$.
Let $I,J \subset [d]$ be two subsets and ${\bf p}, {\bf q}$ two set maps from the sets $I$, resp. $J$, to $[d]$. Set
$
	k = |I|,\, l = |J|$ and $m = |I \cap J|.
$

The objective of this section is to compute the quantity
\begin{equation}
	\label{eqn:OfInterest}
	F({\bf p}, {\bf q}):=\sum_{\sigma \in S_I, \tau \in S_J} \varepsilon (\sigma)\varepsilon(\tau)\mathbb{E}\Big(\prod_{i \in I} u_{i\mathbf{p}(i)}\bar{u}_{\sigma(i)\mathbf{p}(i)}\prod_{j \in J}u_{j\mathbf{q}(j)}\bar{u}_{\tau(j)\mathbf{q}(j)}\Big)
\end{equation}
in terms of $k$, $l$ and $m$, the kernel ${\sf Ker}(\copairpq)$ of the copairing $ \copairpq$ and the Hahn Polynomials introduced in Section \ref{sec:HahnPoly}. The reason for focusing on the quantity above will be apparent in Section \ref{sec:ProofMainTheorem}: it is precisely the quantity that appears when expanding out the second moment $\mathbb{E}\bigl[e_k(AUBU^*)e_l(AUBU^*)\bigr]$. As we will see in Proposition \ref{prop:FIJPQ} below, there is also a dependence on a combinatorial quantity computed on a pair of partitions $({\sf K, {\sf K'}})$ belonging to the set $\setK$. To compute (\ref{eqn:OfInterest}), we will use the Weingarten calculus, see Section \ref{sec:WeinCalc}.
Recall that the kernel of $f\colon I \sqcup J \to \mathcal{X}$ is the partition of $I \sqcup J$ defined by the equivalence relation
\begin{equation*}
	x \sim y \Leftrightarrow f(x) = f(y).
\end{equation*}

\subsection{Computation of $F({\bf p}, {\bf q})$}
\label{sec:ComputationFpq}
We now state one of our two key propositions. Let ${\sf M}$ be the kernel of the set map $\iota$; it is a partition in $\setK$ with type $t({\sf M })=(2^m, 1^{k+l-2m})$. Each element in $I \cap J$ corresponds to a block of size two in ${\sf M}$ and each element in $I \Delta J$ corresponds to a singleton in ${\sf M}$.
\newcommand{\Sr}{S(r, {\sf K})}
\begin{proposition}
\label{prop:FIJPQ}
Under the notations introduced above,
\begin{enumerate}
\item if $\mathbf p$ or $\mathbf q$ is not injective, then
\[
F(\mathbf p,\mathbf q)=0,
\]
\item otherwise,
setting ${\sf K}={\sf Ker}([\mathbf p,\mathbf q])$ and
\[
	S(r, {\sf K}) =
	\sum_{\substack{\eta \leq {\sf M},\, \gamma \leq {\sf K}\\
	|\eta\gamma(I)\cap I|=r}}
	\varepsilon(\eta)\varepsilon(\gamma),
\]
then it holds that (see equation \eqref{eqn:coeffc})

\begin{align}
F(\mathbf p,\mathbf q)
&= k!\,l!\sum_{r=0}^k S(r,{\sf K})
\sum_{h=\max(0,k+l-d)}^{\min(k,l)}
c(2^h_{k+l},d)Q_h(k-r;l,k).
\end{align}

\end{enumerate}
\end{proposition}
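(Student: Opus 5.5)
The plan is to expand $F(\mathbf p,\mathbf q)$ with the Collins--\'Sniady formula on the index set $I\sqcup J$, so that $n=n'=k+l$. Write every factor as $u_{\iota(x),\copairpq(x)}$ and $\bar u_{\iota((\sigma\oplus\tau)(x)),\copairpq(x)}$ for $x\in I\sqcup J$. The row index functions are then $\mathbf i=\iota$ and $\mathbf i'=\iota\circ(\sigma\oplus\tau)$, and both column index functions equal $\copairpq$. The corollary gives a sum over $\pi,\rho\in S_{I\sqcup J}$ with two constraints:
\begin{itemize}
\item the constraint $\iota=\iota\circ(\sigma\oplus\tau)\circ\pi$, which says that $\eta:=(\sigma\oplus\tau)\pi$ satisfies $\eta\le{\sf M}$;
\item the constraint $\copairpq=\copairpq\circ\rho$, which says that $\gamma:=\rho\le{\sf K}$.
\end{itemize}
Substituting $\pi=(\sigma\oplus\tau)^{-1}\eta$, the summand becomes $\varepsilon(\sigma\oplus\tau)\,{\rm Wg}_d(\pi,\gamma)$, which depends on $\pi^{-1}\gamma=\eta^{-1}(\sigma\oplus\tau)\gamma$. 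Hence
\[
F(\mathbf p,\mathbf q)=\sum_{\eta\le{\sf M},\,\gamma\le{\sf K}}\ \sum_{s\in S_I\times S_J}\varepsilon(s)\,{\rm Wg}_d\bigl(\eta^{-1}s\gamma\bigr).
\]

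\emph{Part (1).} First dispose of the non-injective case directly, before any Weingarten expansion. If $\mathbf p(i)=\mathbf p(i')$ with $i\neq i'$ in $I$, then the product $\prod_{i\in I}u_{i\mathbf p(i)}\bar u_{\sigma(i)\mathbf p(i)}$ is unchanged when $\sigma$ is replaced by $\sigma\circ(i\,i')$, because the two conjugate factors share the same column. Since $\varepsilon$ flips under this replacement, pairing $\sigma$ with $\sigma(i\,i')$ kills the sum. The same argument applies to $\mathbf q$.

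In the injective case, ${\sf K}$ lies in $\setK$, so all blocks of ${\sf K}$ and ${\sf M}$ have size at most $2$. Consequently $\eta$ and $\gamma$ are products of disjoint transpositions, hence involutions. This is what makes $\varepsilon(\eta)$ and $\varepsilon(\gamma)$ meaningful and $\eta^{-1}=\eta$.

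\emph{Part (2).} Insert the character expansion of ${\rm Wg}_d$, which involves $c(\lambda,d)\,\chi^\lambda(\eta s\gamma)$ summed over $\lambda\vdash k+l$ with $l(\lambda)\le d$. Next move the sign inside the character. From $\varepsilon(s)=\varepsilon(\eta)\varepsilon(\gamma)\varepsilon(\eta s\gamma)$ and $\varepsilon\chi^\lambda=\chi^{\lambda'}$, the inner sum becomes
\[
\varepsilon(\eta)\varepsilon(\gamma)\sum_{s\in S_k\times S_l}\chi^{\lambda'}(\eta s\gamma)=\varepsilon(\eta)\varepsilon(\gamma)\sum_{s}\chi^{\lambda'}(s\,\gamma\eta),
\]
using that characters are class functions. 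By the definition of spherical functions, this equals $k!\,l!\,\varepsilon(\eta)\varepsilon(\gamma)\,\omega(g)$ with $g=(\gamma\eta)^{-1}$ whenever $\chi^{\lambda'}$ occurs in ${\rm Ind}_{S_k\times S_l}^{S_{k+l}}(1)$, and it vanishes otherwise. By Frobenius reciprocity and Young's rule, the surviving $\lambda'$ are exactly those with at most two rows, $\lambda'=(k+l-h,h)$ with $h\le\min(k,l)$. So $\lambda$ is the two-column partition $2^h_{k+l}=(2^h,1^{k+l-2h})$. The constraint $l(\lambda)=k+l-h\le d$ gives the lower bound $h\ge k+l-d$.

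Finally apply the cited theorem on spherical functions of $(S_{k+l},S_k\times S_l)$ with the subset $I$. It gives $\omega_h(g)=Q_h(k-|gI\cap I|;l,k)$. Since $|g^{-1}I\cap I|=|gI\cap I|$, the value depends only on $r=|\eta\gamma(I)\cap I|$ (or $|\gamma\eta(I)\cap I|$, which is the same number because $\gamma\eta=(\eta\gamma)^{-1}$). Grouping pairs $(\eta,\gamma)$ by $r$ produces $S(r,{\sf K})$ and the stated formula.

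The main obstacle is bookkeeping rather than ideas. One must check that the Weingarten argument really is $\eta^{-1}(\sigma\oplus\tau)\gamma$, and not a conjugate or inverse variant, and that the coset direction in the spherical-function definition matches the convention $\omega(g)=\frac1{|K|}\sum_k\chi(g^{-1}k)$. One must also verify that the sign twist $\lambda\mapsto\lambda'$ sends the Weingarten sum onto two-row spherical functions while $c(\cdot,d)$ stays indexed by the two-column $\lambda$, as in the statement. I would settle each orientation issue by noting that $\eta$ and $\gamma$ are involutions and that $|gI\cap I|$ is inverse-invariant, which makes every such choice immaterial.
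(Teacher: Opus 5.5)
Your proposal is correct. Part (2) follows the paper's proof essentially step for step: the same Weingarten expansion indexed by $I\sqcup J$, the change of variables $\eta=(\sigma\oplus\tau)\pi$, the sign twist $\varepsilon\chi^\lambda=\chi^{\lambda'}$, the recognition of spherical functions of $(S_{k+l},S_k\times S_l)$, and the Hahn-polynomial formula. You are slightly more explicit than the paper on one point. The average of $\chi^{\lambda'}$ over $S_k\times S_l$ is the trace of $\rho(g)$ composed with the projection onto $S_k\times S_l$-invariant vectors, so it vanishes unless $\lambda'$ occurs in the induced trivial representation, which by Young's rule means $\lambda'$ has at most two rows. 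The paper uses this only implicitly.

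Part (1) is where you genuinely differ. The paper first performs the Weingarten expansion and then cancels:
\begin{itemize}
\item it shows $\Pi_{\sigma\circ(x\,x'),\tau}=(x\,x')\Pi_{\sigma,\tau}$;
\item it moves the transposition onto $\gamma$, using that ${\rm Wg}_d$ depends only on $\pi^{-1}\gamma$;
\item it reindexes $\gamma\mapsto(x\,x')\gamma$, which is allowed because $(x\,x')$ preserves the fibres of $\copairpq$.
\end{itemize}
You instead cancel at the level of the integrand. The sum $\sum_{\sigma\in S_I}\varepsilon(\sigma)\prod_{i\in I}\bar u_{\sigma(i)\mathbf p(i)}$ is the determinant of $(\bar u_{a,\mathbf p(b)})_{a,b\in I}$, which has two equal columns when $\mathbf p(i)=\mathbf p(i')$. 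Your route is shorter, needs no Weingarten calculus, and proves more: the random variable inside the expectation vanishes identically, not just its mean. The paper's route only has the advantage of staying within the same Weingarten framework used for Part (2).
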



The remainder of this section is devoted to the proof of Proposition \ref{prop:FIJPQ}.
\begin{proof}
We prove the two assertions in the statement separately. By using the Weingarten calculus, we expand the right hand-side of \eqref{eqn:OfInterest} as follows:
\begin{equation}
	\label{eqn:FIJPQ}
	F({\bf p}, {\bf q}) = \sum_{\sigma \in S_I}\sum_{\tau \in S_J}\varepsilon(\sigma)\varepsilon(\tau)
	\sum_{\substack{\pi \in \Pi_{\sigma,\tau}\\\gamma\leq {\sf Ker}(\copairpq)}}
	\text{Wg}_d(\pi, \gamma),
\end{equation}
where, for each $\sigma \in S_I$ and $\tau \in S_J$, we have set
$$
	\Pi_{\sigma,\tau}= \{\pi \in S_{I \sqcup J} : \iota(k) = \iota((\sigma\sqcup \tau)(\pi(k))) \text{ for all } k \in I \sqcup J\}.
$$
We prove the first point of Proposition \ref{prop:FIJPQ}. That is, we show that if $\mathbf p: I \rightarrow [d]$ or $\mathbf q: J \rightarrow [d]$ is not injective then
\begin{equation*}
	\sum_{\sigma \in S_I}\sum_{\tau \in S_J}\varepsilon(\sigma)\varepsilon(\tau)
	\sum_{\substack{\pi \in \Pi_{\sigma,\tau}\\\gamma\leq {\sf Ker}(\copairpq)}}
	\text{Wg}_d(\pi, \gamma) = 0.
\end{equation*}

By symmetry, we can just consider the case $\mathbf {p} $ not injective. Therefore, suppose there exists $x \neq x' \in I$ such that $\mathbf p(x) = \mathbf p(x')$.

For a given $\sigma \in S_I$ consider the permutation $\sigma ' = \sigma \circ (xx') \in S_I$. Note that $\varepsilon(\sigma') = -\varepsilon(\sigma)$.

We claim that for all $\tau \in S_J$ we have $\Pi_{\sigma', \tau} = (xx')\Pi_{\sigma, \tau}$. In fact, we have $\pi' \in \Pi_{\sigma', \tau}$ if and only if $\iota(z) = \iota((\sigma'\sqcup\tau)(\pi'(z)))$ for all $z \in I \sqcup J$. Since $\sigma' = \sigma \circ (xx')$ this is equivalent to
$$
		\iota(z) = \iota((\sigma\circ (xx')\sqcup \tau)(\pi'(z))) = \iota((\sigma\sqcup \tau)((x,x')\pi'(z))), \quad \text{for all } z \in I \sqcup J.
$$
Hence, $\pi' \in \Pi_{\sigma', \tau}$ if and only if $(xx')\pi' \in \Pi_{\sigma, \tau}$, which proves the claim.
For $\sigma \in S_I$ we have
\begin{align*}
	\sum_{\tau \in S_J}\varepsilon(\tau)
	\sum_{\substack{\pi' \in \Pi_{\sigma',\tau} \\\gamma\leq {\sf Ker}(\copairpq)}}
	\text{Wg}_d(\pi', \gamma) &= \sum_{\tau \in S_J}\varepsilon(\tau)
	\sum_{\substack{\pi \in \Pi_{\sigma,\tau}   \\\gamma\leq {\sf Ker}(\copairpq)}}
	\text{Wg}_d((xx')\pi, \gamma)               \\
	&=\sum_{\tau \in S_J}\varepsilon(\tau)
	\sum_{\substack{\pi \in \Pi_{\sigma,\tau}   \\\gamma\leq {\sf Ker}(\copairpq)}}
	\text{Wg}_d(\pi, (xx')\gamma)               \\
	&= \sum_{\tau \in S_J}\varepsilon(\tau)
	\sum_{\substack{\pi \in \Pi_{\sigma,\tau}   \\\gamma\leq {\sf Ker}(\copairpq)}}
	\text{Wg}_d(\pi, \gamma),
\end{align*}
where we have used the fact that $\text{Wg}_d$ is a class function in the second equality and the fact that $\gamma \leq {\sf Ker}(\copairpq)$ if and only if $(xx')\gamma \leq {\sf Ker}(\copairpq)$ (because $\mathbf p$ maps $x$ and ${x'}$ to the same element in $[d]$). We have therefore shown that the summands cancel each other out (recall that $\sigma $ and $\sigma'$ have opposite signs), so the sum is equal to 0 and the first point of Proposition \ref{prop:FIJPQ} is proved.
Hereafter, we will assume that both $\mathbf p$ and $\mathbf q$ are injective. In particular, this implies that the copairing $\copairpq$ has a kernel ${\sf Ker}(\copairpq)$ which is a partition in $D(I,J)$ with type $(2^y, 1^{k+l-2y})$, where $y$ is the number of elements in the intersection of the images of $\mathbf p$ and $\mathbf q$.
The dependence on $\sigma$ and $\tau$ of the set $\Pi_{\sigma, \tau}$ (whose dependence on $I$,$J$ is implicit) can be absorbed by the simple change of variables
\begin{equation}
	\label{eqn:changeofvariables}
	\Pi_{\sigma,\tau} \rightarrow  \{\eta \in S_{I\sqcup J} : \eta \leq {\sf M}\}  , \quad \pi \mapsto \eta = (\sigma \oplus \tau) \circ \pi.
\end{equation}
Inserting the formula for the Weingarten function and using the change of variables \eqref{eqn:changeofvariables}, we infer (with the notation of Section \ref{sec:WeinCalc}) that
\begin{align*}
	F({\bf p}, {\bf q}) & =\sum_{\substack{\lambda \vdash (k+l) \\ \ell(\lambda) \leq d}}c(\lambda, d) \sum_{(\sigma, \tau)\in S_I \times S_J}\varepsilon(\sigma \oplus \tau)\sum_{\substack{\eta \leq {\sf M}, \\ \gamma \leq {\sf Ker}(\copairpq)}}\chi^{\lambda}((\sigma \oplus \tau)^{-1} \eta \gamma).
\end{align*}
Given that permutations $\eta \leq {\sf M}$ and $\gamma \leq {\sf Ker}(\copairpq)$ are partial transpositions (they are products of disjoint 2-cycles, hence $\eta = \eta^{-1}$,$\gamma = \gamma^{-1}$), we infer
\begin{align}
 & = \sum_{\substack{\lambda \vdash (k+l)     \\ \ell(\lambda) \leq d}}c({\lambda},d) \sum_{(\sigma, \tau)\in S_I \times S_J}\varepsilon(\sigma \oplus \tau)\sum_{\substack{\eta \leq {\sf M}, \\\gamma \leq {\sf Ker}(\copairpq)}}\chi^{\lambda}(\eta \gamma (\sigma \oplus \tau) ) \notag \\
 & = \sum_{\substack{\lambda \vdash (k+l)     \\ \ell(\lambda) \leq d}}c({\lambda},d) \sum_{\substack{\eta \leq {\sf M}, \\\gamma \leq {\sf Ker}(\copairpq)}} \sum_{(\sigma, \tau)\in S_I \times S_J}\varepsilon(\sigma \oplus \tau)\chi^{\lambda}(\eta \gamma (\sigma \oplus \tau) ) \notag \\
 & =\sum_{\substack{\lambda \vdash (k+l)      \\ \ell(\lambda) \leq d}}c({\lambda},d) \sum_{\substack{\eta \leq {\sf M}, \\\gamma \leq {\sf Ker}(\copairpq)}} \varepsilon(\eta)\varepsilon(\gamma)\sum_{(\sigma, \tau)\in S_I \times S_J}\varepsilon(\eta \gamma (\sigma \oplus \tau))\chi^{\lambda}(\eta \gamma (\sigma \oplus \tau) ) \notag \\
 & = k!l!\sum_{\substack{\lambda \vdash (k+l) \\ \ell(\lambda) \leq d}}c({\lambda},d) \sum_{\substack{\eta \leq {\sf M}, \\\gamma \leq {\sf Ker}(\copairpq)}} \varepsilon(\eta)\varepsilon(\gamma)\quad \underline{\frac{1}{k!l!}\sum_{(\sigma, \tau)\in S_I \times S_J}\chi^{\lambda'}(\eta \gamma (\sigma \oplus \tau) )}, \label{eqn:SpherFuncFpq}
\end{align}
where $\lambda'$ is the conjugate of the partition $\lambda$.

In the underlined part of \eqref{eqn:SpherFuncFpq} we recognize the spherical function of the Gelfand pair $(S_{k+l},S_k\times S_l)$ associated to the partition $\lambda'$ of $k+l$. This is precisely what we mentioned in Section \ref{sec:Intro}. From Section \ref{sec:HahnPoly}, the partitions $\lambda'$ with non-trivial associated spherical functions are the two-row partitions $\lambda'=({k+l-h},h)$ for $0 \leq h \leq k\wedge l$. Taking the conjugate, we find that the partitions $\lambda$ contributing to the sum are
$$
	\lambda = (2^{h}, 1^{k+l-2h}) := 2^{h}_{k+l}, \quad 0 \leq h \leq \text{min}(k,l).
$$
Closed combinatorial formulas for these spherical functions in terms of Hahn polynomials have been recalled in section \ref{sec:HahnPoly}. In particular, when $\lambda = 2^h_{k+l}$, we have that
\begin{align*}
	\frac{1}{k!l!}\sum_{(\sigma, \tau)\in S_I \times S_J}\chi^{\lambda'}(\eta \gamma (\sigma \sqcup \tau) )= Q_h(k-|\eta\gamma(I)\cap I|; l,k)
\end{align*}
with $h = 0, \ldots,\text{min}(k,l)$.
Therefore, plugging this back into \ref{eqn:SpherFuncFpq} we find that 
\begin{align*}
	F({\bf p}, {\bf q})= k!l!\sum_{h=\max(0,k+l-d)}^{\min(k,l)} c(2^h_{k+l},d) \sum_{\substack{\eta\leq {\sf M} \\ \gamma \leq  {\sf Ker}(\copairpq)}} \varepsilon (\eta)\varepsilon(\gamma) Q_h(k-|\eta\gamma(I)\cap I|;l,k),
\end{align*}
where, using the hook-length formula \cite[Theorem 4.2.14]{ceccherini2010representation} and the hook-content formula \cite[Theorem 4.3.3]{ceccherini2010representation}, see also \cite[Lemma 2.7]{campbell2022commutators} :
\begin{equation}
\label{eqn:coefweingarten}
	c(2^h_{k+l},d)= \frac{1}{(k+l)!^2}\frac{\chi^{2^h_{k+l}}({\rm id})^2}{s_{2^h_{k+l}}(1^d)} = \frac{1}{h!}\frac{(k+l+1-2h)}{(k+l-h+1)!} \frac{1}{(d+1)_h(d)_{k+l-h}}.
\end{equation}
The next step is to compute the sum over the permutations $\eta$ and $\gamma$. Recall the quantity
\begin{equation*} 
	S(r, \sK) = \sum_{\substack{\eta \leq {\sf M}, \\ \gamma \leq {\sf K}\\ |\eta\gamma(I)\cap I|=r}}\varepsilon(\eta)\varepsilon(\gamma),\quad  r \in [\min(k,l)].
\end{equation*}
With this notation, we have that
\begin{equation*}
	F({\bf p}, {\bf q})= k!l!\sum_{r=0}^k S(r, {\sf Ker}(\copairpq)) \sum_{h=\max(0,k+l-d)}^{\min(k,l)} c(2^h_{k+l},d) Q_h(k-r; l,k).
\end{equation*}
This proves the second assertion and completes the proof of Proposition \ref{prop:FIJPQ}.
\end{proof}
Recall the definitions of ${\sf C}({\sf M}, {\sf K})$ and $\tildeS{{\sf M}}{\sf K}$ given in Section \ref{sec:MainResult}.
\begin{proposition} \label{prop:FormulePourSx}
	Let ${\sf K} \in \setK$ be a partition of $I\sqcup J$ with type $(2^y,1^{k+l-2y})$  with $y\in [k\wedge l]$. Then, for any $r\in [k]$ we have
	\begin{equation}
		\label{eqn:Sx}
		\Sr= (-1)^{r-k}2^{{\sf C}({\sf M}, {\sf K})}\binom{\tildeS{\sf M}{\sf K}}{k-r}.
	\end{equation}
\end{proposition}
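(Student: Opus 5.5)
The plan is to exploit that the sum defining $\Sr$ factorizes over the connected components of the graph $G({\sf M},{\sf K})$, so the identity reduces to a computation on a single path or cycle. First observe that every $2$-block of ${\sf M}$ has the form $\{x_I,x_J\}$, and by Definition \ref{def:cycleszizgzag} every $2$-block of ${\sf K}$ also has one element in $I$ and one in $J$. Hence $G({\sf M},{\sf K})$ is bipartite with respect to $I\sqcup J$. Every vertex has at most one red and at most one blue edge, so the components are alternating cycles, alternating paths, or isolated vertices.

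A choice of $\eta\le{\sf M}$ and $\gamma\le{\sf K}$ is the same as a choice of a subset of red edges and a subset of blue edges. The sign $\varepsilon(\eta)\varepsilon(\gamma)$ is $(-1)$ to the total number of chosen edges. Moreover $k-|\eta\gamma(I)\cap I|=\#\{x\in I:\eta\gamma(x)\in J\}$. By bipartiteness, $x\in I$ lands in $J$ exactly when an odd number of the two steps ($\gamma$ at $x$, then $\eta$ at $\gamma(x)$) is nontrivial. Both the sign and this count are additive over components. I would therefore introduce the generating polynomial
\[
P(t)=\sum_{\eta\le{\sf M},\,\gamma\le{\sf K}}\varepsilon(\eta)\varepsilon(\gamma)\,t^{\,k-|\eta\gamma(I)\cap I|}=\sum_{r}\Sr\, t^{k-r},
\]
and write it as a product $\prod_{\mathcal C}P_{\mathcal C}(t)$ of local polynomials over components $\mathcal C$. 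The claim \eqref{eqn:Sx} is then equivalent to
\[
P(t)=2^{{\sf C}({\sf M},{\sf K})}(1-t)^{\tildeS{\sf M}{\sf K}}.
\]

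Next I would identify $\tildeS{\sf M}{\sf K}$ as a sum of local quantities. The total number of edges is $m+y$. A component with $e$ edges is a cycle ($e$ even), an alternating segment ($e$ odd, both end-edges of the same colour), or a mixed path ($e$ even, one endpoint a singleton of ${\sf M}$ and the other a singleton of ${\sf K}$). Hence
\[
\tildeS{\sf M}{\sf K}=\tfrac12(m+y+{\sf S})=\sum_{\mathcal C}\lceil e(\mathcal C)/2\rceil .
\]
This also yields the relations between $\tilde{\sf S}$ and shared singletons quoted in the introduction. The goal thus becomes the local statement
\[
P_{\mathcal C}(t)=2^{[\mathcal C\text{ is a cycle}]}(1-t)^{\lceil e(\mathcal C)/2\rceil}.
\]
The smallest cases already check out. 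A lone red edge gives $1-t$. A $2$-cycle (a red and a blue edge on the same pair) gives $1-t-t+1=2(1-t)$.

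The main obstacle is the local computation for paths and cycles of arbitrary length. I would handle it with a transfer-matrix argument along the path, processing vertices in order. The state records whether the edge just before the current vertex was chosen; the weight at each $I$-vertex contributes $t$ if exactly one of its two incident chosen/unchosen relevant steps moves it, in the order $\gamma$ then $\eta$; each chosen edge contributes $-1$. Equivalently, I would induct on $e$ by adding two edges (one red, one blue) at a time, showing each addition multiplies the polynomial by $(1-t)$. I expect care is needed in keeping track of which endpoint lies in $I$ and of the order of composition $\eta\gamma$. For cycles, the extra factor $2$ should come from taking the trace of the transfer matrix instead of a boundary vector: the matrix has eigenvalues $1-t$ (twice, or up to a nilpotent part), so the trace gives $2(1-t)^{e/2}$. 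Finally, expanding $2^{\sf C}(1-t)^{\tilde{\sf S}}$ and extracting the coefficient of $t^{k-r}$ gives $(-1)^{k-r}2^{{\sf C}({\sf M},{\sf K})}\binom{\tildeS{\sf M}{\sf K}}{k-r}$, as claimed.
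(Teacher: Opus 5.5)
Your argument is correct, and it takes a genuinely different route from the paper's.

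\textbf{How the two routes differ.} The paper works asymmetrically. It first sums over $\eta$ for fixed $\gamma$ (Proposition~\ref{prop:sumovereta}), which kills every $\gamma$ whose fixed points in $(I\cap J)_I$ and $(I\cap J)_J$ do not correspond under $\iota$. It then parametrizes the surviving $\gamma$ by $\{0,1\}$-valued functions on the unmarked blocks of $\tilde{\sf K}$, sorts blocks into seven types and applies Vandermonde. Finally it needs \eqref{eqn:relationy} and Proposition~\ref{prop:SymmetrySegments} to recognize $m+|{\sf Z}|+|-|$ as $\tilde{\sf S}({\sf M},{\sf K})$.

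Your generating function treats $\eta$ and $\gamma$ symmetrically as edge subsets, which buys three things:
\begin{enumerate}
\item Factorization over components is immediate.
\item $\tilde{\sf S}=\sum_{\mathcal C}\lceil e(\mathcal C)/2\rceil$ comes from plain edge counting, with no need for Proposition~\ref{prop:SymmetrySegments}.
\item Nothing uses ${\sf M}$ beyond ${\sf M}\in D(I,J)$. You get the identity for arbitrary ${\sf K}_1,{\sf K}_2\in D(I,J)$ at once, so the pushforward step in Proposition~\ref{prop:symmetrization} becomes unnecessary.
\end{enumerate}
What the paper's route gives in addition is the $\gamma$-by-$\gamma$ vanishing statement.

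\textbf{The local computation.} You leave it open; it does close, but your description of the state needs correcting. The weight of $x\in I$ depends on the red edge at the \emph{blue neighbour} $x'$ of $x$ (when $\gamma$ moves $x$), not only on the two edges at $x$. So one bit recording ``the previous edge'' is not enough.

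Traverse each component so that every blue edge $\{x',x\}$ is crossed from its $J$-end $x'$ to its $I$-end $x$; this is consistent because colours and sides both alternate. Let the state be the choice $\rho\in\{0,1\}$ of the current red edge ($0$ if absent). Summing over the blue edge, and including the sign of the red edge at $x$, gives
\[
T_{\rho',\rho}=(-1)^{\rho}\bigl(t^{\rho}-t^{1-\rho'}\bigr),\qquad\text{so}\qquad T=(1-t)\begin{pmatrix}1&0\\0&1\end{pmatrix}.
\]
Hence:
\begin{enumerate}
\item A cycle gives $\operatorname{tr}T^{e/2}=2(1-t)^{e/2}$.
\item A path with $b$ blue edges gives $(1-t)^{b}\,u\cdot v$. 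Here $u=(1,-t)$ if the path starts with a red edge (its $I$-endpoint carries only that edge) and $u=(1,0)$ otherwise. Likewise $v=(1,1)$ if it ends with a red edge (at a $J$-vertex) and $v=(1,0)$ otherwise.
\end{enumerate}
Since $u\cdot v=1-t$ exactly for segments with two red end-edges, this is $(1-t)^{\lceil e/2\rceil}$ in every case, as you predicted.

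\textbf{One aside is wrong.} Your count does \emph{not} yield the shared-singleton relations quoted in the introduction. Counting vertices per component gives
\[
k+l-2\tilde{\sf S}=(\text{number of isolated vertices})+(\text{number of mixed paths with } e\ge 2).
\]
For example, take $I=\{1,2\}$, $J=\{2,3\}$ and ${\sf K}=\{\{1,2_J\},\{2_I\},\{3\}\}$. Then $\tilde{\sf S}({\sf M},{\sf K})=1$, so $k+l-2\tilde{\sf S}=2$, but there is only one shared singleton. Likewise \eqref{eqn:relationtofixedpoints} is missing a term counting mixed paths with both endpoints in $I$. None of this affects the proposition you are proving.
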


In proving Proposition \ref{prop:FormulePourSx}, we will see that \emph{dramatic cancellations} occur in the double sum defining $\Sr$, thus yielding \eqref{eqn:Sx}.
Before proceeding with the proof, we must introduce some more notation.

Given a permutation $\gamma \leq {\sf K}$ and a subset $S \subset I\sqcup J$, the subset of elements fixed by $\gamma$ in $S$ is denoted $S^\gamma$.

For a partition ${\sf K} \in D(I,J)$, we write $\tilde{\sf K}=\iota^{\star}({\sf K}\vee M)$ for the partition of $I\cup J = \iota(I\sqcup J)$ defined as the pushforward by ${\iota}$ of the join ${\sf K}\vee {\sf M}$. By definition, $x \sim_{\sf \tilde{K}} y$ if and only if there exist $x'\in {\iota}^{-1}(x)$, $y'\in {\iota^{-1}(y)}$ such that $x'\sim_{\sK \vee \sM} y'$.
Since by definition ${\sf Ker}(\iota) = \sM$ and $\sM \prec \sK \vee \sM$, blocks of $\sK$ and $\tilde\sK$ are in bijection. We  denote $B \in {\sf K}\vee \sM$ and $\tilde{B} \in \tilde{K}$ pairs of blocks corresponding under this bijection.

We say that a block
$\tilde{B}\in \tilde{\sf K}$ \emph{contains} a block $P\in {\sf K}$ if $P$ is contained in the corresponding block $B$ in $\sK \vee \sM$.

\subsection{Proof of Proposition \ref{prop:FormulePourSx} : The sum over $\eta$} To prove Proposition \ref{prop:FormulePourSx}, we first compute the sum over $\eta$. This is the content of the next proposition.
\begin{proposition}\label{prop:sumovereta}
		Let $r\in [k]$. Fix $\gamma \leq {\sf K}$ a permutation less than ${\sf K} \in D(I,J)$. Under the condition $\iota(((I\cap J)_I)^{\gamma}) \,\Delta \,\iota(((I\cap J)_J)^{\gamma}) \neq \emptyset$, one has
	$$
		\sum_{\substack{\eta \leq {\sf M}\\ |\eta\gamma(I)\cap I|=r}}\varepsilon(\eta)=0.
	$$
	Otherwise, 
	\begin{align*}
		\sum_{\substack{\eta \leq {\sf M} \\ |\eta\gamma(I)\cap I|=r}}\varepsilon(\eta)=
		(-1)^{r-|I^\gamma|} \binom{|I\cap J|}{r-|(I\backslash I \cap J)^{\gamma}|}.
	\end{align*}
\end{proposition}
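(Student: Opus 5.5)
The plan is to parametrize $\eta$ explicitly and encode the constraint $|\eta\gamma(I)\cap I|=r$ with a generating function in an auxiliary variable $t$, so that the signed sum factorizes over the elements of $I\cap J$.

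\emph{Step 1: describe $\eta$ and $\gamma(I)$.} A permutation $\eta\leq{\sf M}$ is a product of disjoint transpositions $(x_I\,x_J)$, one for each $x$ in some subset $T\subseteq I\cap J$. Hence the sum over $\eta$ is a sum over $T$, and $\varepsilon(\eta)=(-1)^{|T|}$. Since ${\sf K}\in D(I,J)$, every block of ${\sf K}$ has at most one element in $I$ and at most one in $J$. So every transposition of $\gamma\leq{\sf K}$ swaps an element of $I$ with an element of $J$. Consequently $\gamma(I)=I^\gamma\sqcup\gamma(I\setminus I^\gamma)$, and the second part lies in $J$ and consists exactly of the non-fixed points of $J$. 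In particular, for $x\in I\cap J$:
\begin{itemize}
\item $x_I\in\gamma(I)$ if and only if $x_I\in I^\gamma$;
\item $x_J\in\gamma(I)$ if and only if $x_J\notin J^\gamma$.
\end{itemize}

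\emph{Step 2: compute $|\eta\gamma(I)\cap I|$.} For $z\in I$ we have $z\in\eta\gamma(I)$ if and only if $\eta(z)\in\gamma(I)$. Elements $z\in I\setminus(I\cap J)_I$ are fixed by $\eta$, so together they contribute $c_0:=|(I\setminus I\cap J)^\gamma|$. For $x\in I\cap J$, the element $x_I$ contributes $a_x:=[x_I\in I^\gamma]$ if $x\notin T$, and $b_x:=[x_J\notin J^\gamma]$ if $x\in T$. Therefore
\[
\sum_{T\subseteq I\cap J}(-1)^{|T|}\,t^{|\eta_T\gamma(I)\cap I|}=t^{c_0}\prod_{x\in I\cap J}\bigl(t^{a_x}-t^{b_x}\bigr),
\]
and the desired sum is the coefficient of $t^r$ in this polynomial.

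\emph{Step 3: vanishing case.} The equality $a_x=b_x$ holds precisely when exactly one of $x_I,x_J$ is fixed by $\gamma$. This is the same as saying $x\in\iota(((I\cap J)_I)^\gamma)\,\Delta\,\iota(((I\cap J)_J)^\gamma)$. In that case the factor $t^{a_x}-t^{b_x}$ is zero, so the whole polynomial vanishes. This proves the first assertion.

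\emph{Step 4: non-vanishing case.} Otherwise each $x\in I\cap J$ is either fixed on both copies, giving the factor $t-1$, or moved on both copies, giving the factor $1-t$. Let $f$ and $\mu$ denote the numbers of such $x$, so that $f+\mu=m=|I\cap J|$. The polynomial becomes $(-1)^{\mu}t^{c_0}(t-1)^m$. Its coefficient of $t^r$ is
\[
(-1)^{\mu+m-(r-c_0)}\binom{m}{r-c_0}.
\]
Since $|I^\gamma|=c_0+f$, the exponent satisfies $\mu+m-r+c_0\equiv f+c_0-r\equiv r-|I^\gamma|\pmod 2$. This yields the claimed formula $(-1)^{r-|I^\gamma|}\binom{|I\cap J|}{r-|(I\setminus I\cap J)^\gamma|}$.

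The only delicate point is Step 1: correctly using the structure of $D(I,J)$ to identify membership in $\gamma(I)$ with fixed-point data. Once that is done, the rest is a one-line generating-function computation.
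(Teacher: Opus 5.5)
Your proof is correct, and it reaches the result by a different mechanism than the paper.

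\textbf{How the paper argues.} The paper splits $(I\cap J)_I$ into the four blocks $\{E^I,L^I\}\wedge{\sf m}(\{E^J,L^J\})$. This is exactly your classification of $x\in I\cap J$ by the pair $(a_x,b_x)$. The paper then proceeds in four steps.
\begin{enumerate}
\item It derives the linear relation $|V^{\eta}(E^I,E^J)| = |\eta\gamma(I)\cap I| + |V^{\eta}(L^I,L^J)| - |I^\gamma|$.
\item It counts the $\eta$ with $s$ transpositions and $|\eta\gamma(I)\cap I|=r$ as a convolution of three binomial coefficients, indexed by an auxiliary $v$.
\item When the symmetric difference is nonempty, it kills the signed sum over $s$ with the binomial theorem. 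This requires a separate inequality showing that the full range of $s$ is covered.
\item Otherwise it evaluates the remaining sum over $v$ by Vandermonde.
\end{enumerate}

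\textbf{What your route buys.} Your signed generating polynomial $t^{c_0}\prod_{x\in I\cap J}(t^{a_x}-t^{b_x})$ packages all of this at once. The mixed elements, those in the blocks $(L^I,E^J)$ and $(E^I,L^J)$, contribute a factor that is literally zero. So the vanishing case is immediate, with no range argument. In the other case the product collapses to $(-1)^{\mu}t^{c_0}(t-1)^{m}$, so Vandermonde is replaced by reading off a single binomial coefficient. Your Step 1 also states explicitly the membership criterion the paper uses only implicitly when decomposing $\eta\gamma(I)\cap I$: namely $\gamma(I)\cap I=I^\gamma$ and $\gamma(I)\cap J=J\setminus J^\gamma$, because every transposition of $\gamma\leq{\sf K}$ joins $I$ to $J$.

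\textbf{What the paper's route buys.} It additionally gives the unsigned number of $\eta$ with a given number of transpositions. That finer information is not used later in the paper. It could be recovered from your approach by adding a second variable marking $|T|$.
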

\begin{proof}
    We first define a partition of $(I \cap J)_I$ and a partition of $(I \cap J)_J$. Set
    \begin{align*}
		 & E^J \vcentcolon= \{ j \in (I\cap J)_J : (i,j) \in  \gamma \text{ for some } i \in I \},         \\
		 & L^J \vcentcolon= (I\cap J)_J^{\gamma},
	\end{align*}
	so that
    \begin{equation*}
        (I\cap J)_J = E^J\sqcup L^J,
    \end{equation*}
    and
     \begin{align*}
		 & E^I \vcentcolon= \{ i \in (I\cap J)_I : (i,j) \in  \gamma \text{ for some } j \in J \},         \\
		 & L^I \vcentcolon= (I\cap J)_I^{\gamma},
	\end{align*}
    so that 
    \begin{equation*}
        (I\cap J)_I = E^I \sqcup L^I.
    \end{equation*}
   Define ${\sf m} \in S_{I \sqcup J}$ by ${\sf m} = \prod_{x \in I \cap J}(x_I, x_J)$. Now consider the four block partition $P$ of $I \subset I \sqcup J$  given by
    \begin{equation*}
        P = \{E^I, L^I\} \wedge {\sf m}(\{E^J, L^J\}).
    \end{equation*}
    The blocks of the partition $P$ are indexed by pairs $(X,Y)$ where $X \in \{E^I,L^I\}$ and $Y \in \{E^J, L^J\}$ (the pair $(X,Y)$ corresponds to the block $X \cap {\sf m}(Y)$).
    Since $\eta$ ranges over the permutations less than $\sM$, its action is determined by its restriction to $(I\cap J)_I$. This restriction is, in turn, determined by the family $V^{\eta}_{(X,Y)} \subset (X,Y)$ of subsets on which it coincides with  ${\sf m}$ (since ${\sf m}$ is the largest permutation less than ${\sf M}$).
    Consider the partition of $I \subset I \sqcup J$ given by
    $$
    I =(I\backslash I\cap J)^{\gamma} \sqcup (I\backslash I\cap J)\backslash (I\backslash I\cap J)^{\gamma}\sqcup E^I \sqcup L^I.
    $$ We now partition the set $\mathcal{I}\vcentcolon= \eta\gamma(I) \cap I$ by intersecting it with the partition of $I$ defined above. Firstly, we have
	$$
		(I\backslash I\cap J)^{\gamma} \cap \mathcal{I}=(I\backslash I\cap J)^{\gamma},
	$$
	since $\eta$ acts trivially on elements in $I\backslash (I \cap J)$. Elements in $(I\backslash I\cap J)\backslash(I\backslash I\cap J)^{\gamma}$ are sent by $\gamma$ either to $(I\cap J)_J$ or to its complement in $J$. Elements falling in the latter set cannot be in $\mathcal{I}$ since $\eta$ acts trivially on their image by $\gamma$. Hence, elements $x \in \mathcal{I}\cap (I\backslash I\cap J)\backslash(I\backslash I\cap J)^{\gamma}$ fall in the former set and have images through $\gamma$ in the block $E^J$. That is, we have 
    $$
		\gamma(\mathcal{I}\cap(I\backslash I\cap J)\backslash(I\backslash I\cap J)^{\gamma}) \subset E^J.
	$$
    Applying $\eta$, we get that
	\begin{equation*}
		\eta\gamma(\mathcal{I}\cap(I\backslash I\cap J)\backslash(I\backslash I\cap J)^{\gamma}) \subseteq V^{\eta}(L^I,E^J)\sqcup V^{\eta}(E^I,E^J).
	\end{equation*}
    Elements in $E^I$ are mapped by $\gamma$ either to $E^J$ or to $J\setminus (I\cap J)_J$. Elements belonging to the latter set cannot be in $\mathcal{I}$ since $\eta$ acts trivially on their image by $\gamma$. Therefore, $\gamma$ necessarily maps elements in $\mathcal{I}\cap E^I$ to elements in the block $E^J$, which means that
    $$
		\eta\gamma(\mathcal{I}\cap E^I) \subseteq V^{\eta}(L^I,E^J)\sqcup V^{\eta}(E^I,E^J).
	$$
    By definition of the sets $(\cdot, E^J)$ and $V^{\eta}(\cdot,E^J)$, we have
    $$
    V^\eta(L^I,E^J)\subseteq \eta\gamma(\mathcal{I}\cap(I\backslash I\cap J)\backslash(I\backslash I\cap J)^{\gamma})\sqcup \eta\gamma(\mathcal{I}\cap E^I)
    $$
    and
    $$
    V^\eta(E^I,E^J) \subseteq \eta\gamma(\mathcal{I}\cap(I\backslash I\cap J)\backslash(I\backslash I\cap J)^{\gamma})\sqcup \eta\gamma(\mathcal{I}\cap E^I)
    $$
    Putting the above inclusions together, we find that
    $$
  \eta\gamma(\mathcal{I}\cap(I\backslash I\cap J)\backslash(I\backslash I\cap J)^{\gamma})\sqcup \eta\gamma(\mathcal{I}\cap E^I) =   V^\eta(L^I,E^J) \sqcup V^\eta(E^I,E^J).
    $$
    Finally, it is not difficult to see that
	\begin{equation*}
		\mathcal{I}\cap L^I= ((L^I,E^J)\backslash V^{\eta}(L^I,E^J))  \sqcup ((L^I,L^J)\backslash V^{\eta}(L^I,L^J)).
	\end{equation*}
	We therefore obtain 
	\begin{align*}
		\mathcal{I} = & (I\backslash I\cap J)^{\gamma} \sqcup ((L^I,E^J)\backslash V^{\eta}(L^I,E^J))       \sqcup ((L^I,L^J)\backslash V^{\eta}(L^I,L^J))                                                  \sqcup V^{\eta}(L^I,E^J)\sqcup  V^{\eta}(E^I,E^J).
	\end{align*}

	From the above partition we infer that
	\begin{align}
		\label{eqn:conditionone}
		 |V^{\eta}(E^I,E^J
        )|
		&= |\mathcal{I}| + |V^{\eta}(L^I,L^J)| \nonumber
		                                    -  (|(L^I,L^J)| + |(L^I,E^J)| + |(I\setminus (I\cap J))^\gamma|)                                        \nonumber \\
		                                   & = |\mathcal{I}| + |V^{\eta}(L^I,L^J)|-|I^\gamma|
	\end{align}
    Denote by $s$ the number of 2-cycles of $\eta$, so that
	\begin{align}
		\label{eqn:conditiontwo}
		\sum_{(X,Y)}|V^{\eta}(X,Y) | = s
	\end{align}
	From (\ref{eqn:conditionone}), it follows that the number of $\eta \leq {\sf M}$ with $s$ 2-cycles such that $|\mathcal{I}|=r$ is given by
	\begin{align}
	\label{eqn:CountingEta}
			 & \sum_v\binom{|(L^I,L^J)|}{v}
			 \binom{|(E^I, E^J)|}{r + v-|I^{\gamma}|} \binom{|(E^I,L^J)|+|(L^I,E^J)|}{s-r+|I^\gamma|-2v}
	\end{align}
    Observe that
    \begin{equation}
    \label{eqn:SymDiffEquality}
          |(E^I,L^J)| + |(L^I,E^J)| = |L^I|+ |L^J| -2|(L^I,L^J)|= |\iota(L^I)\Delta \iota (L^J)|
    \end{equation}
    
    Similarly, we have
    $$
    |(E^I,E^J)| = |E^I|-|(E^I,L^J)|= |E^I| - (|L^J|-|(L^I,L^J)|)  = |I\cap J| - |\iota(L^I) \cup \iota(L^J)|,
    $$
    where in the last equality we added and subtracted $|L^I|$.
    Finally, it is clear that
    $$
|(L^I,L^J)| = |\iota(L^I) \cap \iota (L^J)|.
    $$
    Using (\ref{eqn:CountingEta}) and the above equalities we conclude that
    \begin{align*}
		 & \sum_{\substack{\eta \leq {\sf M}                                                                                         \\ |\eta\gamma(I)\cap I|=r}}\varepsilon(\eta)= \sum_{s=0}^{|I\cap J|}\sum_v (-1)^{s-r-2v+|I^{\gamma}|}(-1)^{r-|I^{\gamma}|}
		  \binom{|\iota(L^I)\cap \iota(L^J)|}{v}\binom{|\iota(L^I) \Delta \iota(L^J)|}{s-r-2v+|I^{\gamma}|}  \\
		&\hspace{10cm}\times\binom{|I\cap J|-|\iota(L^I)\cup \iota(L^J)|}{r+ v-|I^{\gamma}|} ,
	\end{align*}
	Interchanging the sums over $s$ and $v$, we find
	\begin{align}
    \label{eqn:SumEta}
		 & \sum_{\substack{\eta \leq {\sf M}                                                                                        \nonumber\\ |\eta\gamma(I)\cap I|=r}}\varepsilon(\eta)= (-1)^{r-|I^{\gamma}|}\sum_v \binom{|\iota(L^I)\cap \iota(L^J)|}{v}\binom{|I\cap J|-|\iota(L^I)\cup \iota(L^J)|}{r+ v-|I^{\gamma}|}\\
		 & \hspace{5cm} \sum_{s=0}^{|I\cap J|} (-1)^{s-r-2v+|I^{\gamma}|}\binom{|\iota(L^I) \Delta \iota(L^J)|}{s-r-2v+|I^{\gamma}|}.
    \end{align}
     Under the condition $|\iota(L^I) \Delta \iota(L^J)|=0$, we get
	\begin{align*}
		\sum_{\substack{\eta \leq {\sf M} \\ |\eta\gamma(I)\cap I|=r}}\varepsilon(\eta)&= (-1)^{r-|I^{\gamma}|}\sum_v \binom{|\iota(L^I)\cap \iota(L^J)|}{v}\binom{|I\cap J|-|\iota(L^I)\cup \iota(L^J)|}{r+ v-|I^{\gamma}|}
	\end{align*}
    Suppose instead that $|\iota(L^I) \Delta \iota(L^J)|>0$. Using (\ref{eqn:conditionone}) and (\ref{eqn:SymDiffEquality}) we have 
    $$
    |I\cap J| -|V^\eta(E^I,E^J)|-|V^\eta(L^I,L^J)| \geq |I\cap J|- |(E^I,E^J)|- |(L^I,L^J)|= |\iota(L^I)\Delta \iota(L^J)|,
    $$
    which implies $|I\cap J|\geq |\iota(L^I) \Delta \iota(L^J)|+r+2v-|I^\gamma|$. Therefore, applying the binomial formula to the second sum in (\ref{eqn:SumEta}) yields
    $$
    \sum_{\substack{\eta \leq {\sf M} \\ |\eta\gamma(I)\cap I|=r}}\varepsilon(\eta) = 0.
    $$
    The desired result now follows from the Vandermonde identity upon noticing that
	$$
		|\iota(L^I)\cap \iota(L^J)|-|I^{\gamma}| =
		|\iota(L^I)|-|I^{\gamma}| = |L^I|-|I^{\gamma}| = |(I\backslash I\cap J)^{\gamma}|,
	$$
	under the condition that $\iota(L^I)\Delta \iota(L^J)=\emptyset$.
    
\end{proof}

\subsection{Proof of Proposition \ref{prop:FormulePourSx} : The sum over $\gamma$}
We now proceed to summing over $\gamma$. Proposition \ref{prop:sumovereta} leads us to the following definition.
\newcommand{\Gammas}{{\Gamma({\sf K, M})}}
\begin{definition}
	We define $\Gammas$ as the set of permutations $\gamma \in S_{I\sqcup J}$ such that
	$$\iota(((I\cap J)^{\gamma}_I)) \,\Delta \,\iota(((I\cap J)^{\gamma}_J)) = \emptyset.$$ This is the set of $\gamma$ that contribute to the second sum over $\gamma$ in $\Sr$.
\end{definition}

To compute the sum over $\gamma$, we need to sort the blocks of $\tilde{\sK}$ into types. We first partition $\tilde{\sK}$ into two subsets $\tilde{\sK}^\circ$ and $\tilde{\sK}^\times$, 
where the subset $\tilde{\sK}^{\circ}$ contains all blocks $\tilde{B}$ of $\tilde{\sK}$ such that $B \in \sK\vee \sM$ contains no singleton of $\sK$. We say that a block $\tilde{B}$ is \emph{marked} if it belongs to the subset $\tilde{\sK}^\times$. The blocks of $\tilde{\sK}$ can be further classified into seven types :
\begin{itemize}
	\item ${\sf I}$ : for blocks contained in $I\cap J$,
	\item ${\sf Z}$ : for blocks intersecting $I\backslash I\cap J$ in one point, $J \backslash I \cap J$ in one point and $I\cap J$,
	\item ${\sf L}$ : for blocks intersecting only $I\cap J$ and $J\backslash I\cap J$ in one point,
	\item ${\reflectbox{\sf L}}$ : for blocks intersecting only $I\cap J$ and $I\backslash I\cap J$ in one point,
	\item $\bullet_I,\bullet_J$ : for singletons contained in $I\backslash I \cap J$ (resp. $J\backslash I\cap J$),
	\item $-$ :  for blocks contained in $I\Delta J$.
\end{itemize}
\newcommand{\tI}{{\sf I}}
\newcommand{\tZ}{{\sf Z}}
\newcommand{\tLJ}{{\sf L}}
\newcommand{\tLI}{{\reflectbox{\sf L}}}
\newcommand{\ts}{{\bullet}}
\newcommand{\tP}{{-}}
We say that a block $B$ of $\sK \vee \sM$ is of type $T$ if the corresponding block $\tilde{B}$ of $\tilde{\sK}$ is of type $T$. Given a block $\tilde{B} \in \tilde{\sK}$, we write $\tilde{B}:T$ to indicate that $\tilde{B}$ is of type $T$. Given a type $T$, we denote $T^\times$, resp. $T^\circ$, the subset of marked, resp. unmarked, blocks of type $T$.
In Figure \ref{fig:BlockTypes} below we illustrate the various types of blocks of $\sK \vee \sM$; this should help clarify the notation chosen for the various blocks.
\begin{figure}[h]
\centering
\def\svgwidth{0.5\linewidth}
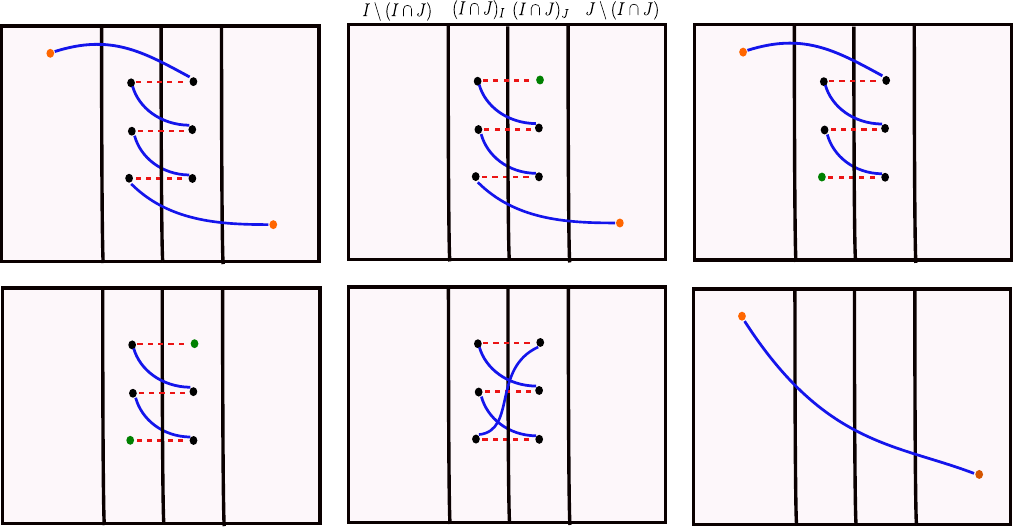
\caption{The boxes represent the set $I\sqcup J$; they are divided into four sections, each section corresponding, in order, to one of the subsets $I \setminus (I \cap J)$, $(I\cap J)_I$, $(I\cap J)_J$ and $J\setminus (I \cap J)$. Edges colored in \emph{blue} connect elements that belong to a block in $\sK$, while dashed edges colored in \emph{red} connect elements that belong to a block in $\sM$. Vertices colored \emph{green} represent elements that are singletons in $\sK$, while vertices colored \emph{orange} represent elements that are singletons in $\sM$. From left to right, the first row depicts blocks of type $\tZ$, $\tLJ$ and $\tLI$. From left to right, the second row depicts blocks of type $\tI^\times$, $\tI^\circ$ and $-$.}
\label{fig:BlockTypes}
\end{figure}
\begin{remark}
\label{rmk:RemarkOnTypesAndCylesSegments}
The fact that blocks of type $\tI^\times$, as depicted above, contain two singletons of $\sK$ is a consequence of Proposition \ref{prop:table}. It is clear from the illustrations above that blocks of $\sK \vee \sM$ of type $\tI^\times$ are precisely the alternating segments of $\sK \vee \sM$ containing singletons of $\sK$ while blocks of type $\tI^\circ$ are precisely the alternating cycles of $\sK \vee \sM$ (so, in particular, we have that $|\tI^\circ|  = {\sf C}(\sK, \sM)$).

\end{remark}
In Proposition \ref{prop:types} we show that these seven types of blocks form a partition of $\tilde{\sK}$. In order to prove Proposition \ref{prop:types}, and subsequent propositions on the types defined above, we introduce the following graph. 
\begin{definition}
	We denote $G_{\sf K}$ the graph whose set of vertices is the set of blocks of ${\sf K} \in \setK$ such that two blocks $B_1, B_2$ are adjacent if there exists $x \in I\cap J$ such that $x \in B_1$ and ${\sf m}(x) \in B_2$ (recall that ${\sf m} \in S_{I \sqcup J}$ is the permutation defined by ${\sf m}=\prod_{x \in {I \cap J}} (x_I,x_J)$).
\end{definition}
\newcommand{\graphK}{{G_{\sf K}}}
\begin{remark}\label{rmk:graphKremarks}
	Each vertex of $\graphK$ has degree at most 2: blocks contained in $I \Delta J$ have degree 0, singletons contained in $I \cap J$ have degree 1, pairs intersecting $I\cap J$ and $I \Delta J$ have degree 1 and pairs contained in $I \cap J$ have degree 2. This immediately implies that the connected components of $\graphK$ are either isolated vertices, cycles or paths. Isolated vertices contain a single degree 0 vertex, cycles  contain only degree 2 vertices while paths contain at least one vertex with degree 1. In particular, cycles are contained in $I \cap J$, since a vertex has degree 2 if and only if its corresponding block is contained in $I \cap J$. We will represent a component $C$ of $\graphK$ via a sequence of vertices $C = (v_0,...,v_k)$ such that $\{v_i,v_{i+1}\}$ is an edge of $\graphK$. Note that each $v_i$ is a block of $\sK$. By a slight abuse of language, we will say that a component $C$ of $\graphK$ intersects a subset $S \subseteq I \sqcup J$ if $C$ contains a vertex that intersects $S$. Given a component $C$ of $\graphK$, we  denote $C^\flat$ the set defined by
	$$
		C^\flat \vcentcolon= v_0 \cup...\cup v_n.
	$$
	This is just the set of all points in $I \sqcup J$ appearing as an  element in one of the vertices of $C$. By definition of the join of two partitions, the set $C^\flat$ is a block of $\sK \vee \sM$ and all blocks of $\sK \vee \sM$ arise as sets $C^\flat$ for some component $C$ of the graph $\graphK$.

\end{remark}

\begin{lemma}\label{lem:componentsGK}
	Let $P$ be a path component of the graph $\graphK$. Then $P$ intersects $I \Delta J$ in at most two points. Furthermore, if $P$ intersects $I \Delta J$ in exactly two points then it must intersect both $I \setminus (I\cap J)$ and $J \setminus (I\cap J)$.
\end{lemma}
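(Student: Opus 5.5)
The argument rests on the structure of the graph $\graphK$ described in Remark \ref{rmk:graphKremarks}. Write a path component as $P=(v_0,\dots,v_n)$. Each vertex is a block of $\sK$, so it has at most one element in $I$ and at most one element in $J$. The plan is to locate exactly where points of $I\Delta J$ can sit along $P$, and then use the pairing ${\sf m}$ to pin down which side of $I\sqcup J$ they lie on.

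First, interior vertices carry no points of $I\Delta J$. An interior vertex $v_i$ with $0<i<n$ has degree 2, and by Remark \ref{rmk:graphKremarks} the degree-2 vertices are exactly the pairs contained in $I\cap J$. Likewise, if $n=0$ the path is an isolated vertex of degree 0, which is a block inside $I\Delta J$ of size at most 2. In that case the lemma reduces to the statement that a block of $\sK$ meeting $I\Delta J$ in two points has one point in $I$ and one in $J$, which is immediate from the definition of $D(I,J)$. So assume $n\geq 1$. The two endpoints then have degree 1, so each is either a singleton in $I\cap J$ or a pair meeting both $I\cap J$ and $I\Delta J$ in exactly one point. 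Hence each endpoint contributes at most one point of $I\Delta J$, and $P$ meets $I\Delta J$ in at most two points.

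For the second claim, suppose both endpoints are pairs, $v_0=\{a_0,c_0\}$ and $v_n=\{a_n,c_n\}$, with $a_0,a_n\in I\cap J$ and $c_0,c_n\in I\Delta J$. The key step is a parity or side-tracking argument along the path. Each edge of $\graphK$ comes from the involution ${\sf m}$, which swaps the $I$-copy and the $J$-copy of a point of $I\cap J$. Each block of $\sK$ meeting $I\cap J$ in two points contains one element from the $I$ side and one from the $J$ side. Walking along $P$, we enter $v_i$ through one element and leave through the other element of the same block, so the side switches. Crossing an edge via ${\sf m}$ switches the side again.

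Concretely, I would prove by induction on $i$ that if $v_0$'s element used on the edge $\{v_0,v_1\}$ lies on side $X\in\{I,J\}$, then in $v_i$ the element used on the edge $\{v_i,v_{i+1}\}$ is also on side $X$. This is because ${\sf m}$ flips the side and passing through the block flips it back. Since $c_0$ is on the side opposite to $a_0$ within $v_0$, we get $c_0$ on the opposite side $\bar X$. At the far end, $a_n$ is entered via ${\sf m}$ from side $X$, so $a_n$ is on $\bar X$, and then $c_n$ is on $X$. Therefore $c_0$ and $c_n$ lie on different sides, meaning one is in $I\setminus(I\cap J)$ and the other is in $J\setminus(I\cap J)$.

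The main obstacle is bookkeeping the sides carefully, distinguishing $x_I$ from $x_J$ so as to avoid conflating copies. The induction above handles this once it is set up with the explicit convention that each block of $\sK$ has at most one element per side.
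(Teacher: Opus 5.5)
Your proof is correct, and its second half takes a different route from the paper's.

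The first half matches the paper's argument. Interior vertices have degree 2, so they are pairs inside $I\cap J$, and each degree-1 endpoint carries at most one point of $I\Delta J$. Your treatment of the isolated-vertex case $n=0$ is harmless but unnecessary. The classification in Remark \ref{rmk:graphKremarks} already separates isolated vertices from path components.

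For the second half, the paper argues by contradiction with a global count. Suppose both points of $I\Delta J$ lay in $I\setminus(I\cap J)$. Then the vertices of $P$ would contain two more elements of $(I\cap J)_J$ than of $(I\cap J)_I$. This is impossible, because ${\sf m}$ restricts to a bijection between $P^\flat\cap(I\cap J)_I$ and $P^\flat\cap(I\cap J)_J$, as $P^\flat$ is a block of $\sK\vee\sM$.

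You instead propagate, by induction along the path, the side of the element used to leave each vertex. This works because ${\sf m}$ flips sides and each interior pair has one element on each side. Both arguments rest on the same imbalance invariant.

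The paper's version is shorter and needs no induction. Yours is direct and avoids the vertex count, which the paper gets slightly wrong. The $n-1$ interior vertices $v_1,\dots,v_{n-1}$ give $n-1$ versus $n+1$ elements, not $n-2$ versus $n$, though only the difference of two matters.

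One step you leave implicit is that the edge leaving an interior vertex $v_i$ uses the element not used to enter it. This holds because ${\sf m}$ of the entering element lies back in $v_{i-1}$, so only the other element can reach $v_{i+1}\neq v_{i-1}$.
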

\begin{proof}
	Any vertex in the path component $P$ whose corresponding block in ${\sf K}$ intersects $I\Delta J$ has degree 1 by Remark \ref{rmk:graphKremarks}.  Since a path component contains two degree 1 vertices, it follows that $P$ contains at most two vertices whose corresponding blocks intersect $I \Delta J$.
    
    Now suppose that $P = (v_0,...,v_n)$ is a path component that intersects $I\Delta J$ in two points $a\neq b \in I\Delta J$. The vertices associated to the blocks containing $a$ and $b$ have degree 1 so they are necessarily the terminal vertices of $P$. Assume without loss of generality that $a \in v_0$ and $b \in v_n$. Now suppose that $a$ and $b$ are both in $I \setminus (I \cap J)$. This means that $v_0 = \{a,x_J\}$ and $v_n = \{b,y_J\}$ for some $x_J \neq y_J \in (I\cap J)_J$. Since the $n-2$ vertices $v_1,...,v_{n-1}$ are all contained in $I \cap J$ (they all have degree 2), the vertices of $P$ contain a total of $n-2$ elements in $(I \cap J)_I$ and $n$ elements in $(I\cap J)_J$. This is a contradiction, since giving a path component $P$ of $\graphK$ is equivalent to giving a bijection between the sets $P^\flat \cap (I\cap J)_I$ and $P^\flat \cap (I\cap J)_J$ such that each element is sent to its symmetric, which is impossible in this case because the two sets have different cardinalities. A similar argument shows that $a$ and $b$ cannot both belong to $J \setminus (I\cap J)$. We therefore conclude that $P$ necessarily intersects both $I \setminus (I \cap J)$ and $J \setminus (I \cap J)$.
\end{proof}

\begin{proposition} \label{prop:types}
	The seven types of blocks of $\tilde{\sK}$ defined above form a partition of $\tilde{\sK}$. Furthermore, blocks of types $\ts_I,\ts_J$, $\tLJ$ and $\tLI$ are marked, i.e. contained in $\sK^{\times}$, while blocks of type $\tZ$ are not.
\end{proposition}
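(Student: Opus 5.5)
We work with the graph $\graphK$ and use Remark \ref{rmk:graphKremarks}: every block $B$ of $\sK\vee\sM$ is of the form $C^\flat$ for a connected component $C$ of $\graphK$. This component is an isolated vertex, a cycle or a path. The types are defined by how $\tilde B=\iota(B)$ meets the three disjoint sets $I\cap J$, $I\setminus(I\cap J)$ and $J\setminus(I\cap J)$. So to prove that the seven types partition $\tilde\sK$, we classify components by two pieces of data: whether $C^\flat$ meets $I\cap J$, and how many points of $I\Delta J$ it contains. Lemma \ref{lem:componentsGK} bounds the latter.

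\emph{Case 1: $C^\flat\cap(I\cap J)=\emptyset$.} Then $C$ is an isolated vertex, i.e. a single block of $\sK$ contained in $I\Delta J$.
\begin{itemize}
\item If this block is a pair, then $\tilde B$ is of type $-$.
\item If it is a singleton, then $\tilde B$ is of type $\ts_I$ or $\ts_J$ according to where the point lies.
\end{itemize}

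\emph{Case 2: $C^\flat$ meets $I\cap J$.} Then $C$ is a cycle or a path, because vertices meeting $I\cap J$ have degree $\geq 1$.
\begin{itemize}
\item If $C$ is a cycle, all its vertices lie in $I\cap J$, so $\tilde B$ is of type $\tI$.
\item If $C$ is a path, Lemma \ref{lem:componentsGK} says it contains $0$, $1$ or $2$ points of $I\Delta J$, and in the case of two points, one lies in $I\setminus(I\cap J)$ and one in $J\setminus(I\cap J)$. These subcases give types $\tI$, then $\tLI$ or $\tLJ$, then $\tZ$ respectively.
\end{itemize}
The conditions defining the seven types are mutually exclusive, since they prescribe different intersection patterns with the three sets. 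Hence the types form a partition.

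For the marking statements, recall that a block is marked exactly when $B$ contains a singleton of $\sK$.
\begin{itemize}
\item For $\ts_I,\ts_J$ this holds by definition, since the block is itself a singleton of $\sK$.
\item For $\tLI$ or $\tLJ$, $C$ is a path with two degree-one endpoints, and only one endpoint contains a point of $I\Delta J$. The other endpoint $v$ has degree $1$ and meets only $I\cap J$. By the degree list in Remark \ref{rmk:graphKremarks}, a block contained in $I\cap J$ of degree $1$ must be a singleton of $\sK$, because pairs contained in $I\cap J$ have degree $2$. So $B$ is marked.
\item For $\tZ$, both endpoints are pairs meeting $I\Delta J$, and the interior vertices have degree $2$, hence are pairs inside $I\cap J$. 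The point of $I\Delta J$ itself lies in a pair of $\sK$ (the endpoint), so no vertex of $C$ is a singleton of $\sK$ and $B$ is unmarked.
\end{itemize}

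The main obstacle is being careful about which vertices can be singletons of $\sK$. A singleton in $I\Delta J$ is isolated. A singleton in $I\cap J$ has degree exactly $1$, since $\sf m$ maps $x_I$ to $x_J$ and vice versa. Pairs meeting $I\Delta J$ have degree $1$ only when they also meet $I\cap J$; otherwise they have degree $0$. I would make this degree bookkeeping explicit, since everything else follows from it together with Lemma \ref{lem:componentsGK}.
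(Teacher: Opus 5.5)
Your proof is correct and follows essentially the same route as the paper. Both arguments rest on the degree bookkeeping of Remark \ref{rmk:graphKremarks} and on Lemma \ref{lem:componentsGK}. Blocks of type ${\sf L}$ or $\reflectbox{\sf L}$ are recognized as paths whose second endpoint is a degree-one singleton inside $I\cap J$, and blocks of type ${\sf Z}$ as paths whose two endpoints are the pairs meeting $I\Delta J$. Your case split by component (isolated vertex, cycle, or path containing $0$, $1$ or $2$ points of $I\Delta J$) is just a more exhaustive version of the paper's argument. It also makes explicit the implicit convention that type $-$ consists of the non-singleton blocks contained in $I\Delta J$.
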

\begin{proof}
 We begin by showing that the seven types of blocks of $\tilde \sK$ form a partition of $\tilde \sK$. All we need to show is that any block of $\tilde{K}$ that intersects $I \cap J$ and $I\Delta J$ is necessarily a block of type $\tLI$, $\tLJ$ or $\tZ$. Let $\tilde{B}$ be a block of $\tilde{\sK}$ intersecting only $I \cap J$ and $J \setminus (I \cap J)$. The corresponding connected component in the graph $G_K$ is a path component $P$ that intersects $I \cap J$ and $J\setminus (I\cap J)$, so by Lemma \ref{lem:componentsGK} it must intersect $J\setminus (I \cap J) $ in exactly one point (the block containing that point is one of the two terminal vertices of $P$). Therefore, the block $\tilde{B}$ intersects $J\setminus (I \cap J)$ in exactly one point, so it is a block of type $\tLJ$. A similar argument shows that any block of $\tilde{\sK}$ intersecting only $I \cap J$ and $I\setminus (I\cap J)$ is necessarily a block of type $\tI$ (that is, it intersects $I \setminus (I \cap J)$ in exactly one point). Finally, suppose that $\tilde{B}$ is a block that intersects $I\cap J$, $I\setminus (I \cap J)$ and $J \setminus (I \cap J)$. This corresponds to a component $P$ in $G_{\sf K}$ that intersects both $I \setminus (I\cap J)$ and $J \setminus (I\cap J)$. By Remark \ref{rmk:graphKremarks} this component is necessarily a path component and by Lemma \ref{lem:componentsGK} the path component $P$, and therefore the block $\tilde{B}$, intersects $I \setminus (I\cap J)$ in exactly one point and $J \setminus (I \cap J)$ in exactly one point. Therefore, $\tilde{B}$ is a block of type $\tZ$. It is now clear that the seven types of blocks defined above form a partition of $\tilde{K}$.
 
	We now show that blocks of type $\ts_I$, $\ts_J$, $\tLJ$ and $\tLI$ are marked. To do so, we must show that they contain at least one singleton of ${\sf K}$. Blocks of type $\bullet_I$ and $\bullet_J$ are singletons of ${\sf K}$ by definition. Blocks of type ${\sf L}$ intersect only $I \cap J$ and $J \setminus (I \cap J)$. This means that such a block corresponds to a block of ${\sf K} \vee M$ that intersects both $I \cap J$ and $J\setminus (I\cap J)$. By definition, it intersects $J \setminus (I \cap J)$ in exactly one point. Therefore, the corresponding connected component in the graph $G_K$ is a path component $P$ that intersects $J\setminus (I\cap J)$ in exactly one point, and the block containing that point is one of the two terminal vertices of $P$. This means that the other terminal vertex of $P$ is necessarily a singleton, since by Remark \ref{rmk:graphKremarks}   the only degree 1 vertices of $\graphK$ that do not intersect $I \Delta J$ are singletons contained in $I \cap J$.
	An analogous argument applies to $\tLI$.

	We now show that blocks of type $\tZ$ are unmarked, that is, they contain no singletons of \sK. A block of type $\tZ$ corresponds to a block in ${\sf K} \vee M$ that intersects $I \cap J$ and both $I\setminus (I\cap J)$ in one point and $J \setminus (I \cap J)$ in one point, which in turn corresponds to a component $P$ in $G_{\sf K}$ that intersects both $I \setminus (I\cap J)$ in one point and $J \setminus (I\cap J)$ in one point. By Remark \ref{rmk:graphKremarks} this component is necessarily a path component. Since the two vertices of $P$ intersecting $I \Delta  J$ have degree 1, all other vertices must have degree 2, so $P$ does not have any singleton blocks as vertices, which in turn means that the corresponding block in $\tilde{\sK}$ contains no singletons of ${\sK}$.
\end{proof}

\begin{proposition} \label{prop:table}
	Let $\tilde{B} \in \tilde{\sf K}$ be a block of size $n$. In the following table ${\rm type}$ is the type of block $\tilde{B}$ while ${\rm \# pairs}$ is the number of pairs of ${\sf K}$ contained in $B$:
	$$
		\begin{array}{|c|*{7}{c|}}
			\hline
			{\rm{type}}     & \tI^{\times} & \tI^{\circ} & \tZ & \tLJ & \tLI & \ts & \tP \\
			\hline
			{\rm{\# pairs}} & n-1          & n           & n-1 & n-1  & n-1  & 0   & 1   \\
			\hline
		\end{array}
	$$
\end{proposition}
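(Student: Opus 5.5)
We will work with the graph $\graphK$ and the description of blocks of $\sK\vee\sM$ as sets $C^\flat$ for components $C$ of $\graphK$ (Remark \ref{rmk:graphKremarks}). The vertices of such a component are exactly the blocks of $\sK$ contained in $B=C^\flat$, so ``$\#$pairs'' is the number of vertices of $C$ of size $2$. The computation rests on one counting identity. Each element of $B\cap(I\cap J)$ (counted as a point of $\tilde B$) contributes exactly one edge of $C$, namely the $\sM$-pair $\{x_I,x_J\}$. On the other hand, $|B| = 2\cdot\#\text{pairs} + \#\text{singletons}$, where singletons are those of $\sK$ inside $B$. Moreover $|B| = 2|\tilde B\cap (I\cap J)| + |\tilde B\cap (I\Delta J)|$, and $n=|\tilde B| = |\tilde B\cap(I\cap J)|+|\tilde B\cap(I\Delta J)|$.

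Next we use the shape of $C$. If $C$ is a cycle, then $\#\text{vertices} = \#\text{edges}$. If $C$ is a path or an isolated vertex, then $\#\text{vertices}=\#\text{edges}+1$. Writing $e=|\tilde B\cap(I\cap J)|$ (the number of edges), $f=|\tilde B\cap (I\Delta J)|$ and $s$ for the number of singletons of $\sK$ in $B$, we get $\#\text{pairs}+s=\#\text{vertices}$ and $2\#\text{pairs}+s = 2e+f$. Subtracting gives
\[
\#\text{pairs} = 2e+f-\#\text{vertices}.
\]
It then remains to determine, type by type, the component shape and the values of $e,f$.

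The case analysis runs as follows. For $\tI^\circ$, Proposition \ref{prop:types} and Remark \ref{rmk:graphKremarks} show that the component has no singletons and lies in $I\cap J$. So every vertex has degree 2, $C$ is a cycle, $f=0$ and $e=n$, which gives $\#\text{pairs}=2n-n=n$. For $\tZ$, $\tLJ$ and $\tLI$, $C$ is a path, so $\#\text{vertices}=e+1$ and $\#\text{pairs}=e+f-1$. Here $f=2$ for $\tZ$ and $f=1$ for the two $\sf L$ types, so in each case $e+f-1=n-1$. For $\tI^\times$, $f=0$ and $C$ is a path (a marked block cannot be a cycle, since cycles contain no singletons), so $\#\text{pairs}=e-1=n-1$. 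As a byproduct, $s=2\#\text{vertices}-(2e+f)=2$, which justifies Remark \ref{rmk:RemarkOnTypesAndCylesSegments}. For $\ts$, $C$ is a single singleton vertex, so there are $0$ pairs. For $\tP$, the block is contained in $I\Delta J$ and is not a singleton, so it is a single pair of $\sK$; indeed an isolated vertex with $f=2$, $e=0$ gives $\#\text{pairs}=2-1=1$.

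The main care point is the claim that marked blocks, and blocks meeting $I\Delta J$, correspond to path (or isolated) components rather than cycles. This follows from Remark \ref{rmk:graphKremarks}, since cycles consist only of degree-2 vertices, and those are pairs inside $I\cap J$. The other care point is keeping straight that $n$ counts points of $\tilde B\subseteq I\cup J$ while $|B|$ counts points of $I\sqcup J$, so that each element of $I\cap J$ is counted twice in $|B|$. Everything else is bookkeeping with the two counting identities above.
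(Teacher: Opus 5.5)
Your proof is correct, but it reaches the table by a different route from the paper's.

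\textbf{How the paper argues.} The paper goes type by type. For each type it decides whether the component of $G_{\sf K}$ is a cycle or a path. It then counts the singletons of ${\sf K}$ in $B$ separately, by analysing the degree-one endpoints. For this it uses Proposition~\ref{prop:types} as an input: ${\sf Z}$ blocks are unmarked, and the other endpoint of an ${\sf L}$ path is a singleton in $I\cap J$. Finally it computes $|B|$ (namely $2n$, $2n-1$, $2n-2$, and so on) and takes half of $|B|-s$.

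\textbf{How you argue.} You eliminate $s$ altogether with the Euler-characteristic identity $\#\text{pairs}=2e+f-\#\text{vertices}$. The only input needed for each type is then the cycle/path dichotomy. The singleton counts ($s=0$ for ${\sf Z}$, $s=1$ for the ${\sf L}$ types, $s=2$ for ${\sf I}^\times$) come out as byproducts. So you recover the marking statements of Proposition~\ref{prop:types} and the two-singleton fact behind Remark~\ref{rmk:RemarkOnTypesAndCylesSegments}, rather than having to assume them.

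\textbf{What each approach buys.} Your uniform formula also makes the ${\sf Z}$ case immediate. The paper's text ends that case with ``$n-2$ pairs'', which is a slip, since $(2n-2)/2=n-1$. Your computation gives the tabulated $n-1$ directly. The paper's endpoint analysis, in exchange, says \emph{where} the singletons sit, which is the structural picture used later to identify these blocks with alternating segments.

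\textbf{One point to state explicitly.} The counts $\#\text{vertices}=\#\text{edges}$ on cycles and $\#\text{vertices}=\#\text{edges}+1$ on paths hold only if $G_{\sf K}$ is read as a multigraph with one edge per element of $I\cap J$. In that reading, a block $\{x_I,x_J\}\in{\sf K}$ gives a loop, and the blocks $\{x_I,y_J\},\{y_I,x_J\}$ give a double edge. Your convention that each element of $B\cap(I\cap J)$ contributes exactly one edge does exactly this. However, the paper defines $G_{\sf K}$ through an adjacency relation, which is a simple graph. Read that way, those two configurations would not be recognised as cycles.
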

\begin{proof}
	First suppose that $\tilde{B}$ is of type $\ts$. Then $\tilde{B}$ is a singleton contained in $I\Delta J$, which means that the corresponding block $B$ of ${\sf K} \vee M$ is a singleton, so in particular it does not contain any pair of ${\sf K}$.
	\par
	If $\tilde{B}$ is of type $\tP$, then it is contained in $I\Delta J$ and it contains at least two points. This means that the corresponding component $S$ in the graph $\graphK$ contains vertices that are contained in $I\Delta J$. By Remark \ref{rmk:graphKremarks} this is only possible if $S$ contains a single degree 0 vertex, which, as a block of $\sK$ is a pair. This in turn means that the corresponding block $B$ of $\sK \vee \sM$ contains a single pair of $\sK$.
	
	Let us now consider blocks of $\tilde{\sK}$ of types that are fully contained in $I \cap J$: that is, blocks of type $\tI^\circ $ and $\tI^\times$. Let $\tilde{B}$ be a block of $\tilde{K}$ of type ${\sf I}^\circ$ of size $n$. Then $\tilde{B}$ corresponds to a block $B$ of $\sK \vee \sM$ of size $2n$ in $\sK \vee \sM$ (each element of $I\cap J$ is split into its two symmetrics in $(I\cap J)_I$ and $(I\cap J)_J$). By assumption, the block $B$ does not contain any singletons of $\sK$. It then follows that the corresponding component $C$ of the graph $\graphK$ contains only vertices of degree 2 (that is, pairs contained in $I \cap J$), so it is a cycle. The cycle $C$ contains $n$ vertices, and since each such vertex is a pair, we conclude that the block $B$ contains $n$ pairs of $\sK$. If instead $\tilde{B}$ is a block of type $\tI^\times$, then it contains at least one singleton of $\sK$. This means that the corresponding block $B$ in $\sK \vee \sM$ contains a singleton of $\sK$, which in turn means that the corresponding component $P$ in the graph $\graphK$ has at least one vertex with degree 1: that is, $P$ is a path component. In particular, $P$ must contain another vertex of degree $1$, and since $P$ only intersects $I \cap J$, this degree-one vertex must be another singleton contained in $I \cap J$. This means that $B$ contains two singletons of $\sK$, which in turn means that the remaining $2(n-1)$ elements must come from pairs of $\sK$. From this we conclude that $B$ contains $n-1$ pairs of $\sK$.
	
	Now suppose $\tilde{B}$ is a block of type $\tLJ$ or $\tLI$. Then $\tilde{B}$ intersects $I\cap J$ and $I \Delta J$ in at least one point, which means that the corresponding component $P$ in the graph $\graphK$ intersects $I\cap J$ in at least one point and it intersects $I \Delta J$ in exactly one point. This means that $P$ contains at least one vertex of degree $1$ (the one intersecting $I\Delta J$), so $P$ is a path component. In particular, $P$ contains another vertex of degree 1. By Proposition \ref{prop:types}, blocks of type $\tLI$ and $\tLJ$ are always marked, so this other degree 1 vertex is necessarily a singleton contained in $I\cap J$. Consequently, we have that $\tilde{B}$ contains exactly one element in $I\Delta J$ and exactly one singleton of $I \cap J$. If $\tilde{B}$ has size $n$ we then have that the corresponding block $B$ has size $2n-1$. Out of these $2n-1$ elements, we know that one appears as a singleton of $\sK$ while the remaining $2(n-1)$ arise from pairs of $\sK$. We therefore conclude that $B$ contains $n-1$ pairs of $\sK$.
	
	Finally, suppose $\tilde{B}$ is a block of type $\tZ$. Then $\tilde{B}$ intersects $I\Delta J$ in two points. This means that if $\tilde{B}$ has size $n$, the corresponding block $B$ in $\sK \vee \sM$ has size $2n-2$. Since blocks of type $\tZ$ contain no singletons by Proposition \ref{prop:types}, it follows that those $2n-2$ elements all arise from pairs of $\sK$, which means that $B$ contains $n-2$ pairs of $\sK$.
\end{proof}

\begin{lemma}
	Let
	$$
		\mathcal{C}(\tilde{\sK})_\times := \{ f \colon {\tilde{\sK}}\to \{0,1\}: f(\tilde{\sK}^{\times}) = \{0\} \}.
	$$
	Then $\Gammas \simeq \mathcal{C}(\tilde{\sK})_\times$.

\end{lemma}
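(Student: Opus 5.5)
The plan is to parametrize permutations $\gamma \leq {\sf K}$ by the set of pairs of ${\sf K}$ they use, and then read the defining condition of $\Gammas$ block by block on $\tilde{\sK}$. I read $\Gammas$ as consisting of those $\gamma\leq{\sf K}$ that satisfy the condition; this is implicit in the definition, since these are the $\gamma$ contributing to $\Sr$. Since ${\sf K}\in\setK$ has blocks of size at most two, a permutation $\gamma\leq {\sf K}$ is a product of disjoint transpositions, each of which is a $2$-block of ${\sf K}$. Hence $\gamma$ is the same thing as a function $g$ from the set of pairs of ${\sf K}$ to $\{0,1\}$, with $g(P)=1$ iff the transposition $P$ occurs in $\gamma$. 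A point $z\in I\sqcup J$ is then fixed by $\gamma$ iff either $\{z\}$ is a singleton of ${\sf K}$, or the pair $P\ni z$ has $g(P)=0$.

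Next I rewrite the condition $\iota((I\cap J)_I^{\gamma})\,\Delta\,\iota((I\cap J)_J^{\gamma})=\emptyset$ locally. It says that for every $x\in I\cap J$, $x_I$ is fixed by $\gamma$ iff $x_J={\sf m}(x_I)$ is fixed by $\gamma$. In the graph $\graphK$, every edge joins the ${\sf K}$-block containing some $x_I$ to the ${\sf K}$-block containing $x_J$. Assign to each vertex $v$ of $\graphK$ the "status" $0$ if $v$ is a singleton or $g(v)=0$, and $1$ otherwise. The condition then becomes: adjacent vertices of $\graphK$ have equal status. Therefore $\gamma\in\Gammas$ iff the status is constant on each connected component of $\graphK$. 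By Remark~\ref{rmk:graphKremarks}, components correspond bijectively to blocks of $\sK\vee\sM$, hence to blocks of $\tilde{\sK}$.

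It remains to count the admissible constant values on each component, and this is where the type analysis enters. If the block $\tilde B$ is marked, its component contains a singleton of ${\sf K}$, whose status is forced to be $0$. So all pairs in that component must have $g=0$, leaving one choice. This covers the types $\ts_I,\ts_J,\tLJ,\tLI$ and $\tI^\times$, the first four being marked by Proposition~\ref{prop:types}. If $\tilde B$ is unmarked, its component consists only of pairs, and this is the case for the types $\tI^\circ$, $\tZ$ and $\tP$. Here both constant values $0$ and $1$ are admissible, and no other constraint appears. For type $\tP$ there is a single pair with no edge. For type $\tZ$ the two endpoints lie in $I\Delta J$, where the condition imposes nothing. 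Moreover, each such component contains at least one pair (Proposition~\ref{prop:table}), so the values $0$ and $1$ give distinct $\gamma$'s.

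The bijection is therefore $\gamma\mapsto f_\gamma$, where $f_\gamma(\tilde B)$ is the common status on the component of $\tilde B$. By the previous step, $f_\gamma$ vanishes on $\tilde{\sK}^\times$. The inverse sends $f$ to the product of all transpositions lying in blocks $B$ with $f(\tilde B)=1$. The main point needing care is the claim that "constant status along components" is exactly equivalent to the $\Delta$-condition, in particular the treatment of endpoints lying in $I\Delta J$. This is handled by noting that edges of $\graphK$ only come from points of $I\cap J$, so points of $I\Delta J$ never enter the condition.
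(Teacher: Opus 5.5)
Your proof is correct and follows essentially the same route as the paper: your ``status'' function on the vertices of $G_{\sf K}$ is the paper's $\mathcal{E}_\gamma$, and your step that the $\Delta$-condition forces equal status across each edge of $G_{\sf K}$, hence constancy on components, is the paper's observation that $\mathcal{E}_\gamma$ is constant on the blocks of $\sK$ and of $\sM$, hence on the blocks of $\sK\vee\sM$. You are slightly more careful than the paper in checking both directions of the correspondence and that the two values on an unmarked block give distinct permutations; the type-by-type analysis is not actually needed, since ``marked'' means by definition ``contains a singleton of $\sK$''.
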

\begin{proof}
	By Proposition \ref{prop:sumovereta}, the $\gamma$ contributing to $\Sr$ are precisely the $\gamma$ belonging to $\Gammas$; that is, the $\gamma$ such that $\iota((I\cap J)_I^{\gamma}) \,\Delta \,\iota((I\cap J)^{\gamma}_J) = \emptyset$. This last condition is equivalent to the following property: $(P)$ if $x \in (I\cap J)_I$ is fixed by $\gamma$ then so is $x \in (I\cap J)_J$. Since $\gamma \in \Gammas$ is a product of disjoint transpositions such that each transposition exchanges a pair in $\sK$, it is determined by a function $\mathcal{E}_{\gamma} : I\sqcup J \to \{0,1\}$ that is constant on the pairs in $\sK$, equal to ${0}$ on the singletons in $\sK$, and such that a pair $P \in \sK$ appears as a $2$-cycle of $\gamma$ if and only if $\mathcal{E}_{\gamma}(P)=\{1\}$. Property $(P)$ implies that $\mathcal{E}_{\gamma}$ is also constant on the blocks of $M$. Therefore, $\mathcal{E}_{\gamma}$ is constant on the blocks of $\sK \vee \sM$. Reciprocally, any function $\mathcal{E}_{\gamma} \colon I \sqcup J \rightarrow \{0,1\}$ constant on the blocks of $\sK \vee \sM$ and equal to $0$ on the singletons of $\sK$ yields a unique permutation $\gamma \leq \sK$ satisfying property $(P)$ (that is, $\gamma \in \Gammas$). In turn, any such function is uniquely determined by a function $\mathcal{E}_{\gamma}\colon {\tilde \sK} \to \{0,1\}$ equal to $0$ on ${\tilde \sK}^{\times}$. The set of such functions is precisely $\mathcal{C}(\tilde{\sK})_\times$, so we conclude that $\Gammas \simeq \mathcal{C}(\tilde{\sK})_\times$.
\end{proof}
Note that, by Proposition \ref{prop:types}, a function $f \in \mathcal{C}_{\times}(\tilde{\sK})$ is equal to $0$ on blocks of type $\ts$, $\tLI$ and $\tLJ$. 

To keep notation contained, given a type $T$, we denote by $|T|$ the number of blocks with type $T$ in $\tilde{\sK}$. Let $\gamma \in \Gammas$. Given $T\in \{ \tZ, \tI, \tP \}$, write  $T_0$, resp $T_1$, for the subset of blocks of type $T$ that are sent to $0$, resp. 1,  by $\mathcal{E}_{\gamma} \in \mathcal{C}(\tilde{\sK})_\times$.
\begin{lemma}\label{lem:sumsgamma}Let $\gamma \in \Gammas$,
	\begin{align}
		\label{eqn:fixedpointformulasgamma1}
		 & |(I\backslash I \cap J)^{\gamma}|= |\bullet_I| + |\reflectbox{\sf L}| + |{\sf Z}_{0}|+ |-_0|, \hspace{0.1cm} {\rm and}                                                                                                               \\
		\label{eqn:fixedpointformulasgamma2}
		 & |(I \cap J)^{\gamma}|= \sum_{\tilde{B}: {\sf I_0}}|\tilde{B}| + \sum_{\tilde{B}:{\sf Z_0}}(|\tilde{B}|-2) + \sum_{\substack{\tilde{B}:{\sf L}}}(|\tilde{B}|-1) + \sum_{\substack{\tilde{B}:{\scalebox{0.7}{\tLI}}}}(|\tilde{B}|-1).
	\end{align}
	Furthermore, the total number of 2-cycles $c_2(\gamma)$ of $\gamma$ is given by
	\begin{align} \label{eqn:numberofinversionsgamma}
		c_2(\gamma)	= \sum_{\tilde{B}: {\tI_1}}|\tilde{B}| + \sum_{\tilde{B}:{\tZ_1}}(|\tilde{B}| - 1) +|-_1|.
	\end{align}
\end{lemma}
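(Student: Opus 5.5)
We will prove the three formulas block by block. By the preceding Lemma, $\gamma\in\Gammas$ corresponds to $\mathcal{E}_\gamma\in\mathcal{C}(\tilde\sK)_\times$, and $\gamma$ is the product of the transpositions given by the pairs of $\sK$ contained in blocks $B\in\sK\vee\sM$ with $\mathcal{E}_\gamma(\tilde B)=1$; every other point is fixed. Since the blocks of $\sK\vee\sM$ partition $I\sqcup J$, the quantities $|(I\setminus I\cap J)^\gamma|$, $|(I\cap J)^\gamma|$ and $c_2(\gamma)$ are all additive over blocks. So it suffices, for each type $T$ and each value $0$ or $1$ of $\mathcal{E}_\gamma$, to compute the local contribution of one block $B$, using the table of Proposition~\ref{prop:table} and the structure given by Proposition~\ref{prop:types} and Lemma~\ref{lem:componentsGK}. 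We read $|(I\cap J)^\gamma|$ as the number of points $x\in I\cap J$ with $x_I$ (equivalently $x_J$, by property $(P)$) fixed. This is the quantity that enters Proposition~\ref{prop:sumovereta} through $L^I$.

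For \eqref{eqn:numberofinversionsgamma}: if $\mathcal{E}_\gamma(\tilde B)=1$, the block contributes exactly its number of $\sK$-pairs, which Proposition~\ref{prop:table} gives. For type $\tI^\circ$ this is $|\tilde B|$, and type $\tI^\times$ cannot take value $1$, so $\tI_1=\tI^\circ_1$. A block of type $\tP$ contributes $1$. For type $\tZ$ the table lists $n-1$, but the proof of Proposition~\ref{prop:table} actually derives $n-2$ from the size count $2n-2$. I will recheck this against the small example where $\tilde B=\{a,x,b\}$ with $a\in I\setminus J$, $x\in I\cap J$, $b\in J\setminus I$, so that $B=\{a,x_I,x_J,b\}$. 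Here the pairs are $\{a,x_J\}$ and $\{x_I,b\}$, giving $2=n-1$. So the size of $B$ is $2(n-2)+2=2n-2$ and the pair count is $n-1$, consistent with the table; the "$n-2$" in that proof is a slip. Blocks with value $0$ and the types $\ts,\tLJ,\tLI$ (always value $0$) contribute nothing, and summing over blocks gives the formula.

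For \eqref{eqn:fixedpointformulasgamma1}: each point of $I\setminus I\cap J$ lies in exactly one block, of type $\ts_I$, $\tLI$, $\tZ$ or $\tP$. Type $\ts_I$ always contributes $1$. For $\tLI$, the unique point of $I\setminus J$ lies in a terminal pair and the block is marked, hence valued $0$; so the point is fixed and contributes $1$. For $\tZ$ the unique point of $I\setminus J$ contributes $1$ if the block is valued $0$ and $0$ otherwise. For $\tP$ I must check that such a block meets $I\setminus J$ in exactly one point; its pair could lie inside $I\setminus J$, inside $J\setminus I$, or straddle them. This needs care, and the count $|-_0|$ seems to require that each $\tP$ block meets $I\setminus J$ exactly once. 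A pair inside $I$ or inside $J$ is impossible because $\sK\in\setK$, so the pair straddles and the contribution is exactly $1$ when the value is $0$.

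For \eqref{eqn:fixedpointformulasgamma2}: a block of type $\tI_0$ has all its $|\tilde B|$ points of $I\cap J$ fixed. For $\tI^\times$ this is automatic. For $\tI_1$, every point lies in a transposed pair, so none is fixed. For $\tZ_0$, $\tLJ$ and $\tLI$, the number of points of $I\cap J$ in $\tilde B$ is $|\tilde B|$ minus the number of its points in $I\Delta J$, which is $2$ or $1$; all of them are fixed because the value is $0$. A $\tZ_1$ block fixes no point of $I\cap J$, since all its vertices are pairs. Blocks of types $\ts$ and $\tP$ do not meet $I\cap J$. The main obstacle I expect is the bookkeeping with the $\tZ$ pair count and confirming that the $\tI^\times$ singletons (values forced to $0$) are correctly counted among the fixed points of $I\cap J$.
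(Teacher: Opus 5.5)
Your proposal is correct and is essentially the paper's proof. Both count fixed points and $2$-cycles block by block over $\tilde{\sK}$, sorted by type and by the value of $\mathcal{E}_\gamma$, using Proposition~\ref{prop:types} for which types are forced to $0$ and Proposition~\ref{prop:table} for the pair counts. Your extra checks are also right and fill in points the paper passes over: a $\tP$ block must straddle $I\setminus J$ and $J\setminus I$ because $\sK\in\setK$, and a $\tZ$ block of size $n$ contains $n-1$ pairs, so the ``$n-2$'' in the proof of Proposition~\ref{prop:table} is a slip, while the table and \eqref{eqn:numberofinversionsgamma} are correct.
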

\begin{proof}
	We begin with (\ref{eqn:fixedpointformulasgamma1}). Note that to count the number of fixed points of $\gamma$ on a given set $X$ we must determine the types of blocks of $\tilde{\sf K}$ that intersect $X$ and, among those, determine the ones on which $\mathcal{E}_\gamma$ evaluates to 0. By definition, the only blocks of ${\tilde{\sf K}}$ intersecting $I \setminus (I \cap J)$ on which $\mathcal{E}_\gamma$ evaluates to 0 are those of type $\bullet_I$, $\reflectbox{\sf L}$, ${\sf Z}_0$ and $-_0$. Since blocks of these types each contain exactly one element of $I \setminus (I\cap J)$, we conclude that
	$$
		|(I\backslash I \cap J)^{\gamma}|= |\bullet_I| + |\reflectbox{\sf L}| + |{\sf Z}_{0}|+ |-_0|.
	$$
	We now determine (\ref{eqn:fixedpointformulasgamma2}). The blocks of $\tilde{\sf K}$ that intersect $I \cap J$ and on which $\mathcal{E}_\gamma$ evaluates to 0 on are those of type ${\sf L}$, $\reflectbox{\sf L}$, $Z_0$, and ${\sf I_0}$. A block $\tilde{B}$ of type ${\sf L}$ or $\reflectbox{\sf L}$ contains only one element outside of $I \cap J$, so such a block contributes a $|\tilde{B}|-1$ to $ |(I \cap J)^{\gamma}|$. A block $\tilde{B}$ of type ${\sf Z_0}$ contains exactly two elements outside of $I \cap J$, so such a block contributes a $|\tilde{B}|-2$ to $|(I \cap J)^{\gamma}|$. Finally, blocks of type ${\sf I_0}$ are fully contained in $I \cap J$ so each such block contributes via its full cardinality to $|(I \cap J)^{\gamma}|$. Putting this all together, we find
	$$
		|(I \cap J)^{\gamma}|= \sum_{\tilde{B}\in {\sf I_0}}|\tilde{B}| + \sum_{\tilde{B}:{\sf Z_0}}(|\tilde{B}|-2) + \sum_{\substack{\tilde{B}:{\sf L}}}(|\tilde{B}|-1) + \sum_{\substack{\tilde{B}:{\scalebox{0.7}{\tLI}}}}(|\tilde{B}|-1).
	$$
	Finally, we determine (\ref{eqn:numberofinversionsgamma}). We are counting the number of 2-cycles of $\gamma$, so we restrict our attention to the blocks of $\tilde{\sf K}$ on which $\mathcal{E}_\gamma$ evaluates to 1. Such blocks are necessarily of type ${\sf I_1}$, ${\sf Z_1}$ and $-_1$. The total number of 2-cycles of $\gamma$ coming from any such block $\tilde{B}$ corresponds to the total number of pairs contained in the corresponding block $B$ of $\sK$. From Proposition \ref{prop:table} it follows that
	$$
		c_2(\gamma)= \sum_{\tilde{B}\in{\tI_1}}|\tilde{B}| + \sum_{\tilde{B}\in{\tZ_1}}(|\tilde{B}| - 1) +|-_1|.
	$$
\end{proof}

\subsection{Proof of Proposition \ref{prop:FormulePourSx} : Symmetrization and End of the Proof} Before moving on to the proof of Proposition \ref{prop:FormulePourSx} we prove an interesting and useful property (Proposition \ref{prop:SymmetrySegments}) on the number of alternating segments ${\sf S}(\sK_1,\sK_2)$ of the join $\sK_1 \vee \sK_2$ with $\sK_1, \sK_2$ partitions in $\setK$. To state it, we denote by ${\sf I}_{\sK_2}^{\times}({\sK_1})$ (resp. ${\sf I}_{\sK_1}^{\times}({\sK_2})$) the set of alternating segments of $\sK_1 \vee \sK_2$ containing exactly two elements that belong to singletons in ${\sK}_1$ (resp. ${\sK}_2$). By definition, we have ${\sf S}(\sK_1, \sK_2)=|{\sf I}_{\sK_2}^{\times}({\sK_1})| + |{\sf I}_{\sK_1}^{\times}({\sK_2})| $. 



\begin{proposition}
	\label{prop:SymmetrySegments}
	Let ${\sf K}_1$ and ${\sf K}_2$ be two partitions in $D(I,J)$, of types $t(\sK_1) =(2^{y_1}, 1^{k+l-2y_1})$ and $t(\sK_2) = (2^{y_2}, 1^{k+l-2y_2})$. Then
	\begin{align*}
		y_1  + |{\sf I}^{\times}_{\sK_2} ({\sK_1})| = y_2 + |{\sf I}^{\times}_{\sK_1} ({\sK}_2)|.
	\end{align*}
	In particular,
	\begin{align*}
		\frac{1}{2}(y_1+y_2+{\sf S}({\sf K}_1, {\sf K}_2)) = y_1 + |{\sf I}^{\times}_{\sf K_2} ({\sf K_1})| = y_2 + |{\sf I}^{\times}_{\sf K_1} ({\sf K}_2)|.
	\end{align*}
\end{proposition}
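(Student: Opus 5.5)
We plan to prove the identity by a double count over the blocks of $\sK_1 \vee \sK_2$, using the graph $G(\sK_1,\sK_2)$ from the Remark following the definition of alternating cycles. Both $\sK_1$ and $\sK_2$ lie in $\setK$, so every element of $I\sqcup J$ has degree at most one in the red edges and at most one in the blue edges. Hence every vertex has degree at most two. The connected components, which are exactly the blocks of $\sK_1\vee\sK_2$, are therefore isolated vertices, alternating cycles, or paths whose edges alternate red/blue. We sort the path components according to the colours of their end edges.

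The key observation is local. Consider a path component with $e_1$ red edges and $e_2$ blue edges. Because colours alternate, $e_1-e_2\in\{-1,0,1\}$. The endpoints are determined as follows:
\begin{itemize}
\item If $e_1=e_2+1$, both end edges are red. Both endpoints are then singletons of $\sK_2$ and not of $\sK_1$, so the component is an element of ${\sf I}^\times_{\sK_1}(\sK_2)$.
\item If $e_2=e_1+1$, the component is an element of ${\sf I}^\times_{\sK_2}(\sK_1)$.
\item If $e_1=e_2$, the path has one endpoint that is a singleton of each partition, so it is neither kind of segment.
\end{itemize}
Cycles have $e_1=e_2$. An isolated vertex is a singleton of both partitions and has $e_1=e_2=0$.

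Summing over all components, $y_1=\sum e_1$ and $y_2=\sum e_2$, since every pair of $\sK_i$ is exactly one edge of colour $i$. Hence
\[
y_1-y_2=\sum_{\text{components}}(e_1-e_2)=|{\sf I}^\times_{\sK_1}(\sK_2)|-|{\sf I}^\times_{\sK_2}(\sK_1)|,
\]
which is the first identity. The second follows immediately: we have ${\sf S}(\sK_1,\sK_2)=|{\sf I}^\times_{\sK_2}(\sK_1)|+|{\sf I}^\times_{\sK_1}(\sK_2)|$, and averaging the two equal quantities $y_1+|{\sf I}^\times_{\sK_2}(\sK_1)|$ and $y_2+|{\sf I}^\times_{\sK_1}(\sK_2)|$ gives $\frac12(y_1+y_2+{\sf S})$.

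The main point to verify carefully is the endpoint/colour bookkeeping in the definition of alternating segments. We must check that a block containing exactly two singletons of $\sK_1$ is precisely a path whose two end edges are blue. We must also check that no other block, for instance a mixed path or an isolated vertex, contains exactly two singletons of one partition. An isolated vertex contains one singleton of each partition, and a mixed path contains one singleton of each, so both checks hold. We must also confirm that a length-one path (a single edge) falls correctly into the $e_1-e_2=\pm1$ cases.
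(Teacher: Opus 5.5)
Your proof is correct, and it takes a more direct route than the paper.

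\textbf{What the paper does.} It first transports $\sK_1$ to $\sM$ by a bijection of $I\sqcup J$ preserving $I$ and $J$. It then counts the pairs of $\sK$ block by block, using the seven-type classification of blocks from Proposition~\ref{prop:table}, which itself rests on Lemma~\ref{lem:componentsGK} and Proposition~\ref{prop:types}. This gives $y+|{\sf I}^\times| = m+|{\sf Z}|+|-|$. Finally it identifies the blocks of types ${\sf Z}$ and $-$ with the segments ending in singletons of $\sM$, and the ${\sf I}^\times$-blocks with the segments ending in singletons of $\sK$.

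\textbf{How the two relate.} Take $e_1$ to be the number of $\sM$-pairs in a block, namely $|\tilde B\cap(I\cap J)|$. Then the paper's table reproduces exactly your local difference $e_1-e_2$: it is $+1$ on ${\sf I}^\times$, $-1$ on ${\sf Z}$ and $-$, and $0$ on every other type.

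\textbf{What your route buys.} It is the same pair count, run symmetrically, without the reduction to $\sM$ and without the seven-type classification. It never uses the splitting $I\sqcup J$. It therefore holds for any two partitions of a finite set into blocks of size at most two.

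\textbf{What the paper's route buys.} It produces \eqref{eqn:relationy} and its companion \eqref{eqn:relationtofixedpoints}, stated directly in terms of the block types. These are reused in the proof of Proposition~\ref{prop:FormulePourSx} and for the shared-singleton formula $(\sK_1,\sK_2)[k]=k-\tilde{\sf S}(\sK_1,\sK_2)$.

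\textbf{One point to state explicitly.} $G(\sK_1,\sK_2)$ must be read as a multigraph. A pair common to $\sK_1$ and $\sK_2$ then gives a red and a blue parallel edge, that is, an alternating $2$-cycle with $e_1=e_2=1$. Your convention that each pair is one edge of its own colour already guarantees this.
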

\begin{proof}
    Since any two partitions in $\setK$ of the same type may be mapped to one another via the pushforward of a bijection of $I \sqcup J$ preserving $I$ and $J$, it follows that $\sK_1$ may be pushed forward to $\sM$ via such a bijection. By the invariance of the number of alternating segments under this pushforward, we can assume that $\sK_1 = \sM$.
Let ${\sf K}$ be a partition in $D(I,J)$ of type $(2^y,1^{k+l-2y})$.

The proof is an application of Proposition \ref{prop:table}. We partition the set of pairs of ${\sK}$ according to the blocks of ${\sK \vee \sM}$ and count the number of pairs in each block. This gives
\begin{align*}
	y & = \sum_{\substack{B \in \sK \vee \sM \\ B:\tI^{\circ}}}|B| + \sum_{\substack{B \in \sK \vee \sM \\ B:\tI^{\times}}}(|B|-1)+ \sum_{\substack{B \in \sK \vee \sM \\B:\scalebox{0.7}{\tLI}}}(|B|-1) + \sum_{\substack{B \in \sK \vee \sM \\ B:\tLJ}}(|B|-1) + \sum_{\substack{B \in \sK \vee \sM \\ B:\ts}} 0  \\ & \hspace{1cm}+\sum_{\substack{B \in \sK \vee \sM \\ B:\tP}} 1 + \sum_{\substack{B \in \sK \vee \sM \\ B:\tZ}} (|B|-1) \\
	  & =\sum_{\substack{B \in \sK \vee \sM \\ B:\tI^{\circ}}}|B\cap (I\cap J)| + \sum_{\substack{B \in \sK \vee \sM \\ B:\tI^{\times}}}(|B\cap (I\cap J)|-1)+ \sum_{\substack{B \in \sK \vee \sM \\B:\scalebox{0.7}{\tLI}}}(|B \cap (I\cap J)|) \\ & \hspace{1cm}+ \sum_{\substack{B \in \sK \vee \sM \\ B:\tLJ}}(|B \cap (I\cap J)|)      + |\tP| + \sum_{\substack{B \in \sK \vee \sM \\ B:\tZ}}(|B\cap (I\cap J)|+1)                                                                                          \\
	  & =|I\cap J| - |\tI^{\times}| + |\tZ| + |\tP|.
\end{align*}
We have therefore shown that
\begin{align}
	\label{eqn:relationy}
	y+|{\sf I}^{\times}| = y+|{\sf I}_{\sM}^{\times}({\sK})|=m + |\tZ|+|\tP|,
\end{align}
where the first equality follows from Remark \ref{rmk:RemarkOnTypesAndCylesSegments}.
To conclude the proof, we must now show that
$|\tZ|+|\tP| = |{\sf I}^{\times}_{\sM}({\sK})|$. By definition, a block $B$ of ${\sK}$ such that $\tilde{B}$ is of type ${\sf Z}$ contains exactly two singletons from ${\sf M}$ (that is, two singletons from $I\Delta J$). Therefore, blocks of type $\tZ$ correspond to alternating segments of the graph $\sK \vee \sM$ of cardinality greater than or equal to four containing two elements that are singleton blocks of ${\sM}$. Likewise, blocks of type $\tP$ correspond to alternating segments of $\sK \vee \sM$ of length two such that both elements belong to singleton blocks of ${\sM}$.
Hence, we have that
$
	|-| + |{\sf Z}| = |{\sf I}^{\times}_{\sf M}({\sf K})|
$ and the result is proved.
\end{proof}
Observe that \eqref{eqn:relationy} also implies that $y+|{\sf I}_{\sM}^{\times}({\sK})|$ is less than or equal to $k$, since
\begin{equation}
	\label{eqn:relationtofixedpoints}
	m+ |{\sf Z}|+|-| = k - |\bullet_I|.
\end{equation}
\begin{remark}
Proposition \ref{prop:SymmetrySegments} establishes a symmetry between two partitions $\sK_1$ and $\sK_2$ belonging to $\setK$. This property is precisely what makes the expression of Theorem \ref{thm:MainTheorem} manifestly symmetric in $y$ and $m$.    
\end{remark}
    

We are finally ready to prove Proposition \ref{prop:FormulePourSx}.
\begin{proof}[Proof of Proposition \ref{prop:FormulePourSx}]
We first note that
$$
	\varepsilon(\gamma)(-1)^{|I^{\gamma}|-r}=(-1)^{c_2(\gamma)}(-1)^{|I^{\gamma}|-r} = (-1)^{k-r},
$$
where the second equality follows from the  identity $c_2(\gamma) + |I^\gamma| = k$ (which follows from the fact that $\gamma \leq {\sf K} \in D(k,l)$).
This yields
\begin{align*}
	 & \sum_{\gamma \in \Gammas }\varepsilon(\gamma)(-1)^{r-|I^{\gamma}|}\binom{|I\cap J|}{r-(I\backslash I \cap J)^\gamma} =(-1)^{k-r}\sum_{\gamma \in \Gammas}\binom{|I\cap J|}{r-(I\backslash I \cap J)^\gamma}.
\end{align*}
Since $\Gammas \simeq \mathcal{C}(\tilde{\sK})_\times$, we can sum over $\mathcal{C}(\tilde{\sK})_\times$ instead and, using Lemma \ref{lem:sumsgamma}, we infer that
\begin{align*}
	\sum_{\gamma \in  \Gammas}\binom{|I\cap J|}{r-(I\backslash I \cap J)^\gamma} & = \sum_{\mathcal{E}_{\gamma} \in  \mathcal{C}(\tilde{\sf K})_\times}\binom{|I\cap J|}{r-|\bullet_I|-|\reflectbox{\sf L}|-|{\sf Z}_0|-|-_0|)} \\& =2^{|{\sf I}^{\circ}|}\sum_{l}\binom{|{\sf Z}|+|-|}{l}\binom{|I\cap J|}{r-|\bullet_I|-|\reflectbox{\sf L}|-l} \\
	                                                                                                                              & =2^{|{\sf I}^{\circ}|}\binom{|{\sf Z}|+|-|+|I\cap J|}{-|\bullet_I|-|\reflectbox{\sf L}|+r},
\end{align*}
where the second equality follows from the fact that choosing $\mathcal{E}_\gamma \in \mathcal{C}(\tilde{\sK})_\times $ is equivalent to choosing which blocks of type $\tI^\circ, \tZ$ or $\tP$ it evaluates to 0 on.
We therefore conclude that
\begin{align*}
	\Sr & =2^{|\tI^{\circ}|}(-1)^{k-r}\binom{|\tZ|+|\tP|+|I\cap J|}{-|\ts_I|-|\tLI|+r}                        \\
	              & =2^{|{\tI}^{\circ}|}(-1)^{k-r}\binom{|\tZ|+|\tP|+|I\cap J|}{|{\tZ}|+|\tP|+|I\cap J|+|\ts_I|+|\tLI|-r} \\
	              & =2^{|{\tI}^{\circ}|}(-1)^{k-r}\binom{y+ |{\tI^\times|}}{k-r},
\end{align*}
where in the second equality we used the symmetry of the binomial coefficient and in the third equality we used \eqref{eqn:relationy}. The result follows from Proposition \ref{prop:SymmetrySegments} and Remark \ref{rmk:RemarkOnTypesAndCylesSegments}.
\end{proof}
To move closer to the proof of Theorem \ref{thm:MainTheorem}, we now sum $\Sr$ over all partitions $\sK \in \setK$ with a fixed type. 
\begin{proposition}
	\label{prop:symmetrization}
	Let $y \in [\min(k,l)]$ and $r \in [k]$. Then, under the notation of this section, we have
	\begin{align*}
		\sum_{\substack{{\sK}\in \setK \\ t({\sK})=(2^y,1^{k+l-2y})}} \hspace{-0.75cm}\Sr= \frac{m!(k+l-2m)!}{k!l!}\binom{k+l-2m}{k-m}^{-1}\sum_{\substack{{\sf K_1, \sf K_2} \in \partitionskl\\ {t({\sK}_1)}=(2^m,1^{k+l-2m}) \\  {t({\sK}_2)}=(2^y,1^{k+l-2y})}} (-1)^{r-k}2^{{\sf C}({\sK_1},{\sK_2})} \binom{\tildeS{\sK_1}{\sK_2}}{k-r}
	\end{align*}
\end{proposition}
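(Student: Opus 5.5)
The plan is to start from Proposition \ref{prop:FormulePourSx}. It gives, for each ${\sf K}\in\setK$ of type $(2^y,1^{k+l-2y})$,
\[
S(r,{\sf K})=(-1)^{r-k}2^{{\sf C}({\sf M},{\sf K})}\binom{\tilde{\sf S}({\sf M},{\sf K})}{k-r}.
\]
The left-hand side of the statement is therefore a sum over ${\sf K}$ of a quantity depending only on the isomorphism class of the pair $({\sf M},{\sf K})$. Call two pairs isomorphic if some bijection of $I\sqcup J$ preserving $I$ and $J$ carries one to the other; the quantities ${\sf C}$, ${\sf S}$ and the types are invariant under such pushforwards. The idea is to average over all possible choices of the first partition, replacing the fixed ${\sf M}$ by a free ${\sf K}_1$ of type $(2^m,1^{k+l-2m})$. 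To do this I transport everything to $D(k,l)$ using any bijection $I\sqcup J\to[k+l]$ sending $I$ onto $\{1,\dots,k\}$ and $J$ onto $\{k+1,\dots,k+l\}$. This identifies $\setK$ with $\partitionskl$, and the sum on the left becomes $\sum_{{\sf K}_2} f({\sf K}_1^0,{\sf K}_2)$, where ${\sf K}_1^0$ is the image of ${\sf M}$ and $f({\sf K}_1,{\sf K}_2)=(-1)^{r-k}2^{{\sf C}({\sf K}_1,{\sf K}_2)}\binom{\tilde{\sf S}({\sf K}_1,{\sf K}_2)}{k-r}$.

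The key step is an orbit–stabilizer argument for the group $G=S_k\times S_l$ acting on $\partitionskl$. Since $f$ is $G$-invariant, the map ${\sf K}_1\mapsto\sum_{{\sf K}_2} f({\sf K}_1,{\sf K}_2)$, with ${\sf K}_2$ ranging over the fixed type $(2^y,\dots)$, is constant on $G$-orbits. Any two partitions of $\partitionskl$ with the same type lie in one orbit, because a pair is just one element of $[k]$ matched with one element of $\{k+1,\dots,k+l\}$. So the set of ${\sf K}_1$ of type $(2^m,1^{k+l-2m})$ is a single orbit, and
\[
\sum_{{\sf K}_2} f({\sf K}_1^0,{\sf K}_2)=\frac{1}{N_m}\sum_{{\sf K}_1,{\sf K}_2} f({\sf K}_1,{\sf K}_2),
\]
where $N_m$ is the number of partitions in $\partitionskl$ of type $(2^m,1^{k+l-2m})$.

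It remains to compute $N_m$ and match it with the prefactor. Choosing $m$ elements of $[k]$, $m$ elements of the $J$-part, and a bijection between them gives
\[
N_m=\binom{k}{m}\binom{l}{m}m!=\frac{k!\,l!}{m!\,(k-m)!\,(l-m)!}.
\]
The claimed prefactor is
\[
\frac{m!(k+l-2m)!}{k!l!}\binom{k+l-2m}{k-m}^{-1}=\frac{m!(k-m)!(l-m)!}{k!l!},
\]
which is exactly $1/N_m$. The identity then follows.

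The main obstacle is bookkeeping rather than substance. First, one has to verify carefully that ${\sf C}$ and $\tilde{\sf S}$ are genuinely invariant under simultaneous relabeling of both partitions; this is clear from their definitions via the join and singletons. Second, the statement's $m=|I\cap J|$ must equal the number of pairs of ${\sf M}$, so that the image of ${\sf M}$ really has type $(2^m,1^{k+l-2m})$ and $|I|=k$, $|J|=l$ transfer correctly.
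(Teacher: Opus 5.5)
Your proposal is correct and follows essentially the same argument as the paper. Both proofs apply Proposition \ref{prop:FormulePourSx}, use the invariance of ${\sf C}$ and $\tilde{\sf S}$ under $I$- and $J$-preserving relabelings together with the transitivity of this action on partitions of a fixed type to replace ${\sf M}$ by an average over ${\sf K}_1$, and then observe that the count $\binom{k}{m}\binom{l}{m}m!$ cancels the prefactor exactly. The only difference is cosmetic: you transport to $D(k,l)$ first and phrase the step as an orbit average, whereas the paper changes variables inside $D(I,J)$ and transports at the end.
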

\begin{proof}
	Let $\sK_2$ be a partition in $\setK$ with type $t(\sK_2) = (2^y,1^{k+l-2y})$. From Proposition \ref{prop:FormulePourSx} and Proposition \ref{prop:SymmetrySegments} we have that
	$$
		S(r,{\sK}_2) = (-1)^{r-k}2^{{\sf C}({\sf K_2}, M)}\binom{y+{\sf I}^{\times}_{{\sM}}({\sK}_2)}{k-r}.
	$$
	It is clear that for any  bijection  $\phi$ of $I\sqcup J$ preserving $I$ and $J$, we have ${\sf C}(\phi^{\star}({\sM}),\phi^{\star}({\sK}_2)) = {\sf C}({\sM},{\sK}_2)$ and ${\sf S}(\phi^{\star}({\sM}),\phi^{\star}({\sK}_2)) = {\sf S}({\sM},{\sK}_2)$. Since any partition ${\sK_1} \in \setK$ with type $(2^m,1^{k+l-2m})$ is the image of the partition $\sM$ under the pushforward of a certain bijection $\phi$ of $I\sqcup J$ preserving $I$ and $J$, we get
	\begin{align*}
		 & \frac{m!(k+l-2m)!}{k!l!}\binom{k+l-2m}{k-m}^{-1}\sum_{\substack{{\sK_1, \sK_2} \in \setK \\ {t({\sK}_1)}=(2^m,1^{k+l-2m}) \\  {t({\sK}_2)}=(2^y,1^{k+l-2y})}} (-1)^{r-k}2^{{\sf C}({\sK_1},{\sK_2})} \binom{\tildeS{{\sK}_1}{{\sK}_2}}{k-r} \\
		 & =\frac{m!(k+l-2m)!}{k!l!}\binom{k+l-2m}{k-m}^{-1}\sum_{\substack{{\sK_1} \in \setK          \\  {t({\sK}_1)}=(2^m,1^{k+l-2m})}}\sum_{\substack{{\sK_2} \in \setK\\  {t({\sK}_2)}=(2^y,1^{k+l-2y})}} (-1)^{r-k}2^{{\sf C}({\sK_1},{\sK_2})}\binom{\tildeS{{\sK}_1}{{\sK}_2}}{k-r} \\
		 & =\frac{m!(k+l-2m)!}{k!l!}\binom{k+l-2m}{k-m}^{-1} \sum_{\substack{{\sK_1} \in \setK        \\  {t({\sK}_1)}=(2^m,1^{k+l-2m})}} \sum_{\substack{{\sK_2} \in \setK\\  {t({\sK}_2)}=(2^y,1^{k+l-2y})}}(-1)^{r-k}2^{{\sf C}({\sM},\phi^{\star}({\sK}_2))} \binom{\tilde{{\sf S}}({\sM},\phi^{\star}({\sK}_2))}{k-r}.
	\end{align*}
	 Since $\phi^{\star}$ is a bijection of the set of partitions ${\sK}_2 \in \setK$ with type $(2^y,1^{k+l-2y})$, a change of variables in the second sum yields

	\begin{align*}
		 & = \frac{m!(k+l-2m)!}{k!l!}\binom{k+l-2m}{k-m}^{-1}|\{ {\sK_1} \in \setK : t(\sK_1)=(2^m, 1^{k+l-2m})\}| \\
		 & \hspace{2cm}\sum_{\substack{{\sK} \in \setK                                                         \\  {t({\sK})}=(2^y,1^{k+l-2y})}}(-1)^{r-k}2^{{\sf C}(\sM,{\sK})} \binom{\tilde{{\sf S}}({\sM},{\sK})}{k-r} \\
		 & = \frac{m!(k-m)!(l-m)!}{k!l!} \binom{k}{m}\binom{l}{m}m! \sum_{\substack{{\sK} \in \setK             \\  {t({\sK})}=(2^y,1^{k+l-2y})}}S(r,{\sK}) \\
		 & = \sum_{\substack{{\sK} \in \setK                                                                    \\  {t({\sK})}=(2^y,1^{k+l-2y})}}S(r,{\sK})
	\end{align*}
	To conclude, choose a bijection $\alpha : I \sqcup J \to \{1,\ldots,k+l\}$, sending $I \subset I\sqcup J$ to $\{1,\ldots k\}$ and $J \subset I \sqcup J$ to $\{k+1,\ldots,k+l\}$. Then the pushforward $\alpha^{\star}$ yields a bijection from $\setK$ to $\partitionskl$ and the result follows.
\end{proof}

\section{Proof of Theorem \ref{thm:MainTheorem}}
\label{sec:ProofMainTheorem}
\begin{proof}[Proof of Theorem \ref{thm:MainTheorem}]
From now on we let $W = AUBU^*$. We have the well-known formula
$$
	e_k(W) = \sum_{\substack{I \subseteq [d]\\ |I| = k}} \text{det}(W(I,I)),
$$
where $W(I,J)$ denotes the submatrix of $W$ whose rows are indexed by $I\subseteq [d]$ and whose columns are indexed by $J \subseteq [d]$. This means that
\begin{align*}
	e_k(W)e_l(W) & = \Big(\sum_{\substack{I \subseteq [d] \\ |I| = k}} \text{det}(W(I,I))\Big)\Big(\sum_{\substack{J \subseteq [d]\\ |J| = l}} \text{det}(W(J,J))\Big) \\
	                             & = \sum_{\substack{I \subseteq [d]      \\ |I| = k}} \sum_{\substack{J \subseteq [d]\\ |J| = l}} \text{det}(W(I,I))\text{det}(W(J,J)).
\end{align*}
Note that
\begin{align*}
	\det(W(I,I))
	= \sum_{\sigma \in S_I} \varepsilon(\sigma) \prod_{i\in I}\Big(a_{i}\sum_{p=1}^d u_{ip}b_p\bar{u}_{\sigma(i)p}\Big)
	= \Big(\prod_{i\in I}a_{i} \Big)\sum_{\sigma \in S_I} \varepsilon(\sigma) \prod_{i \in I}\Big(\sum_{p=1}^d u_{ip}b_p\bar{u}_{\sigma(i)p}\Big).
\end{align*}
From this we deduce that
\begin{align*}
	\det(W(I,I))\det(W(J,J))
	 & = \Big(\prod_{i\in I}a_{i} \Big)\sum_{\sigma \in S_I} \varepsilon(\sigma) \prod_{i \in I}\Big(\sum_{p=1}^d u_{ip}b_p\bar{u}_{\sigma(i)p}\Big)                                                                                         \\
	 & \hspace{2cm}\Big(\prod_{j\in J}a_{j} \Big)\sum_{\tau \in S_J} \varepsilon(\tau) \prod_{j \in J}\Big(\sum_{q=1}^d u_{jq}b_q\bar{u}_{\tau(j)q}\Big)                                                                              \\
	 & = \Big(\prod_{i\in I}a_{i}\Big)\Big(\prod_{j\in J}a_{j}\Big)\sum_{\sigma \in S_I} \sum_{\tau \in S_J} \varepsilon(\sigma)\varepsilon(\tau)                                                                                            \\
	 & \hspace{2cm} \Big(\prod_{i \in I}\Big(\sum_{p=1}^d u_{ip}b_p\bar{u}_{\sigma(i)p}\Big)\Big)(\prod_{j \in J}\Big(\sum_{q=1}^d u_{jq}b_q\bar{u}_{\tau(j)q})\Big)                                                     \\
	 & = \Big(\prod_{i\in I}a_{i}\Big)(\prod_{j\in J}a_{j})\sum_{\mathbf{p}: I \to [d]}\sum_{\mathbf{q}: J \to [d]}(\prod_{i \in I}b_{\mathbf{p}(i)})\Big(\prod_{j \in J}b_{\mathbf{q}(j)}\Big)                        \\
	 & \hspace{2cm} \sum_{\sigma \in S_I, \tau \in S_J} \varepsilon(\sigma)\varepsilon(\tau) (\prod_{i \in I} u_{i\mathbf{p}(i)}\bar{u}_{\sigma(i)\mathbf{p}(i)}\prod_{j \in J}u_{j\mathbf{q}(j)}\bar{u}_{\tau(j)\mathbf{q}(j)}),
\end{align*}
where in the final equality we switched sums and products. Putting everything together and taking the expectation we find
\begin{align*}
	 & \mathbb{E}\bigl[e_k(W) e_{l}(W)\bigr] \nonumber                                                                                                                                                                    \\
	 & \hspace{2cm}= \sum_{\substack{I \subseteq [d]                                                                                                                                                                                                    \\ |I| = k}} \sum_{\substack{J \subseteq [d]\\ |J| = l}} (\prod_{i\in I}a_{i})(\prod_{j\in J}a_{j}) \sum_{\mathbf{p}: I \to [d]}\sum_{\mathbf{q}: J \to [d]}(\prod_{i \in I}b_{\mathbf{p}(i)})(\prod_{j \in J}b_{\mathbf{q}(j)}) \nonumber\\
	 & \hspace{3cm} \sum_{\sigma \in S_I, \tau \in S_J} \varepsilon(\sigma)\varepsilon(\tau)\, \mathbb{E}(\prod_{i \in I} u_{i\mathbf{p}(i)}\bar{u}_{\sigma(i)\mathbf{p}(i)}\prod_{j \in J}u_{j\mathbf{q}(j)}\bar{u}_{\tau(j)\mathbf{q}(j)}).
\end{align*}

Using (\ref{eqn:OfInterest}) and Proposition \ref{prop:FIJPQ} the above expression can be rewritten as

\begin{equation}\label{eqn:productofes}
	= \sum_{\substack{I,J \subseteq [d]\\ |I| = k, |J| = l }} (\prod_{i\in I}a_{i})(\prod_{j\in J}a_{j})\sum_{\substack{\mathbf{p}: I \to [d], \mathbf{q}: J \to [d]\\ \text{injective} }} (\prod_{i \in I}b_{\mathbf{p}(i)})(\prod_{j \in J}b_{\mathbf{q}(j)}) F(\mathbf{p}, \mathbf{q}).
\end{equation}
We now fix $m = |I \cap J|$ and $y = |{\rm Im}({\bf p}) \cap {\rm Im}({\bf q})|$. Note that requiring  $y = |{\rm Im}({\bf p}) \cap {\rm Im}({\bf q})|$ is equivalent to requiring that $t({\sf Ker}(\copairpq)) = (2^y,1^{k+l-2y})$. Since $F({\bf p}, {\bf q})$ depends only on ${\sf Ker}({\bf p}, {\bf q})$, as shown in Proposition \ref{prop:FIJPQ}, write $F({\sf K})$ for this common value. Then (\ref{eqn:productofes}) becomes
\begin{equation}\label{eqn:productofes2}
	= \sum_{m,y=0}^{\text{min}(k,l)}\sum_{\substack{I,J \subseteq [d]\\ |I| = k, |J| = l \\ |I \cap J| =m}}  (\prod_{i\in I}a_{i})(\prod_{j\in J}a_{j})
	\sum_{\substack {{\sK}\in \setK \\ t({\sK}) = (2^y,1^{k+l-2y})}} F({\sK})\sum_{\substack{\mathbf{p}: I \to [d], \mathbf{q}: J \to [d]\\ \text{injective} \\ {\sf Ker}(\copairpq)={\sK}}}  (\prod_{i \in I}b_{\mathbf{p}(i)})(\prod_{j \in J}b_{\mathbf{q}(j)}) .
\end{equation}
Using an argument similar to the one used in the proof of Proposition \ref{prop:MonSymmFun},
we have
$$
	\sum_{\substack{\mathbf{p}: I \to [d], \mathbf{q}: J \to [d]\\ \text{injective} \\ {\sf Ker}(\copairpq)={\sK}}}  (\prod_{i \in I}b_{\mathbf{p}(i)})(\prod_{j \in J}b_{\mathbf{q}(j)}) = {y!(k+l-2y)!}M_y(b),
$$
where $t({\sf K}) = (2^y,1^{k+l-2y})$.
By Proposition \ref{prop:FIJPQ} we have
\begin{equation}
\label{eqn:Fdkl}
	\sum_{\substack {{\sf K}\in D(I,J) \\ t({\sf K}) = (2^y,1^{k+l-2y})}} F({\sf K}) = \sum_{\substack {{\sf K}\in D(I,J) \\ t({\sf K}) = (2^y,1^{k+l-2y})}} \sum_{r = 0}^k S(r,{\sf K}) F_{d,k,l}(r).
\end{equation}

where
$$
	F_{d,k,l}(r)= \sum_{h=\max(0,k+l-d)}^{\min(k,l)} c(2^h_{k+l},d) Q_h(k-r; l,k).
$$

Switching the two sums and using Proposition \ref{prop:symmetrization}, equation (\ref{eqn:Fdkl}) reduces to
\begin{align*}
	= \sum_{r=0}^k F_{d,k,l}(r) \frac{m!(k+l-2m)!}{k!l!}\binom{k+l-2m}{k-m}^{-1}\sum_{\substack{{\sf K_1, \sf K_2} \in D(k,l) \\ {t({\sf K}_1)}=(2^m,1^{k+l-2m}) \\  {t({\sf K}_2)}=(2^y,1^{k+l-2y})}}(-1)^{r-k} 2^{{\sf C}({\sf K_1},{\sf K_2})} \binom{\tilde{\sf S}({\sf K}_1, {\sf K}_2)}{k-r}.
\end{align*}
Plugging everything back into (\ref{eqn:productofes2}) we find
\begin{align}
	 & \mathbb{E}\bigl(e_k(W) e_{l}(W)\bigr) \nonumber                                                                   \\
	 & \hspace{2cm}= \sum_{m,y=0}^{\min(k,l)}  {y!(k+l-2y)!}M_y(b) \sum_{r=0}^k F_{d,k,l}(r) \frac{m!(k+l-2m)!}{k!l!} \binom{k+l-2m}{k-m}^{-1} \nonumber \\
	 & \hspace{2cm} \sum_{\substack{\mathsf{K}_1,\mathsf{K}_2 \in D(k,l)                                                                               \\ t(\mathsf{K}_1)=(2^m,1^{k+l-2m})\\ t(\mathsf{K}_2)=(2^y,1^{k+l-2y})}} (-1)^{r-k}2^{{\sf C}(\mathsf{K}_1,\mathsf{K}_2)} \binom{\tilde{\mathsf{S}}(\mathsf{K}_1, \mathsf{K}_2)}{k-r} \sum_{\substack{I,J \subseteq [d]\\ |I| = k, |J| = l \\ |I \cap J| =m}} (\prod_{i\in I}a_{i})(\prod_{j\in J}a_{j}). \label{eqn:symmetriceqnes}
\end{align}
From Proposition \ref{prop:MonSymmFun} we have that
$$
	\sum_{\substack{I,J \subseteq [d]\\ |I| = k, |J| = l \\ |I \cap J| =m}}  (\prod_{i\in I}a_{i})(\prod_{j\in J}a_{j})= \binom{k+l-2m}{k-m}M_m(a).
$$

Therefore, plugging the above expression back into (\ref{eqn:symmetriceqnes}), simplifying the binomial coefficients and switching the sums we arrive at 
\begin{align*}
	 & \mathbb{E}[e_k(W)e_l(W)]                            \\
	 & \hspace{2cm}= \sum_{m,y=0}^{\text{min}(k,l)}  {m!y!(k+l-2y)!(k+l-2m)!}M_m(a)M_y(b)  \\
	 & \hspace{2cm} \sum_{\substack{\sf K_1, \sf K_2                                     \\ t({\sf K}_1)=(2^m,1^{k+l-2m}) \\ t({\sf K}_2)=(2^y,1^{k+l-2y})}} \sum_{r=0}^k (-1)^{r-k}2^{{\sf C}({\sf K_1},{\sf K}_2)} \binom{\tildeS{\sK_1 }{\sK_2}}{k-r} F_{d,k,l}(r).
\end{align*}

Inserting back the expression of $F_{d,k,l}(r)$ and of $c(2_{k+l}^h,d)$ and using Proposition \ref{prop:binomialtransform} to compute the sum over $r$, we obtain

\begin{align*}
	 & \mathbb{E}[e_k(W)e_l(W)]          \\
	 & \hspace{1cm}=  \sum_{y,m} {m!y!(k+l-2y)!(k+l-2m)!}M_m(a)M_y(b) \\
	 & \hspace{1cm} \sum_{\substack{\sf K_1, \sf K_2                   \\ {t({\sf K}_1)}=(2^m,1^{k+l-2m}) \\ {t({\sf K}_2)}=(2^y,1^{k+l-2y})}} \hspace{-0.5cm} 2^{{\sf C}({\sf K_1},{\sf K_2})} \hspace{-0.5cm} \sum_{h = {\rm max}(0,k+l-d, {\tilde{\sf S}}(\sK_1,\sK_2))}^{{\rm min}(k,l)}  \frac{1}{h!}\frac{(k+l+1-2h)}{(k+l-h+1)!}\frac{(h)_{\tilde{\sf S}({\sf K}_1,{\sf K}_{2})}(l+k+1-h)_{\tilde{\sf S}({\sf K}_1,{\sf K}_{2})}}{(l)_{\tilde{\sf S}({\sf K}_1,{\sf K}_{2})}(k)_{\tilde{\sf S}({\sf K}_1,{\sf K}_{2})}} \\
	 & \hspace{2cm}\frac{1}{(d+1)_h(d)_{k+l-h}}
\end{align*}

Set $a=\tildeS{\sK_1}{\sK_2}$ for brevity. We now rearrange the expression in the third sum above into 

\begin{align*}
\frac{1}{h!}\frac{(k+l+1-2h)}{(k+l-h+1)!}\frac{(h)_{a}(l+k+1-h)_{a}}{(l)_{a}(k)_{a}} &= \frac{1}{l!k!}(k+l+1-2h)\frac{(l-a)!(k-a)!}{(h-a)!(k+l+1-h-a)!} \\
&=\frac{(l-a)!(k-a)!}{k!l!}\frac{(k+l+1-2h)}{(k+l+1-2a)!}\binom{k+l+1-2a}{h-a} \\
&=\frac{1}{k!l!} \frac{(k+l+1-2h)}{\binom{k+l-2a}{k-a}(k+l+1-2a)}\binom{k+l+1-2a}{h-a}
\end{align*}
From the absorption identity for binomial coefficients, we get 
\begin{align*}
(k+l+1-2h)\binom{k+l+1-2a}{h-a} =(k+l+1-2a)(\binom{k+l-2a}{h-a}-\binom{k+l-2a}{h-a-1}).
\end{align*}
Hence, we have
\begin{align*}
\frac{1}{h!}\frac{(k+l+1-2h)}{(k+l-h+1)!}\frac{(h)_{a}(l+k+1-h)_{a}}{(l)_{a}(k)_{a}} = \frac{1}{k!l!}\frac{1}{\binom{k+l-2a}{k-a}}(\binom{k+l-2a}{h-a}-\binom{k+l-2a}{h-a-1})
\end{align*}
Upon replacing 
$$
k+l-2a = (\sK_1,\sK_2)[k+l], \quad k-a = (\sK_1,\sK_2)[k]
$$
and introducing the new summation index $h' = h -\tilde{\sf S}(\sK_1,\sK_2)$ (which we rename $h$) we obtain the desired expression.
\end{proof}

\bibliographystyle{amsalpha}
\bibliography{references}

\end{document}